
\documentclass[12pt,draftcls,onecolumn]{IEEEtran}






\usepackage{mathtools} 
\usepackage{latexsym}
\usepackage{graphicx}
\usepackage{array}
\usepackage{amsmath}
\usepackage{amsfonts}
\usepackage{amssymb}
\usepackage{amsthm}
\usepackage{epsfig}
\usepackage{cite}
\usepackage{tabulary}
\usepackage{booktabs}
\usepackage{algorithm,algorithmic}
\usepackage{boxedminipage}
\usepackage{float}
\graphicspath{{figures/}}
\usepackage{graphics}
\usepackage{multirow}
\usepackage{threeparttable} 
\usepackage[usenames,dvipsnames]{color}
\usepackage{url,bm,xspace,dsfont}
\usepackage{verbatim}



\newtheorem{theorem}{Theorem}
\newtheorem{lemma}{Lemma}
\newtheorem{definition}{Definition}
\newtheorem{corollary}{Corollary}

\newtheorem{remark}{Remark}

\newtheorem{assumptions}{Assumptions}
\newtheorem{problem}{Problem}

%
%
%

\newcommand{\eps}{\epsilon}

\newcommand{\veps}{\varepsilon}
\newcommand{\la}{\langle}
\newcommand{\ra}{\rangle}

%
%
%

%
%
\newcommand{\re}{\Re}

%

\newcommand{\sr}{\stackrel}

\newcommand{\rar}{\rightarrow}

\newcommand{\tri}{\sr{\triangle}{=}}

\newcommand{\be}{\begin{equation}}
\newcommand{\ee}{\end{equation}}
\newcommand{\bea}{\begin{eqnarray}}
\newcommand{\eea}{\end{eqnarray}}
\newcommand{\bes}{\begin{eqnarray*}}
\newcommand{\ees}{\end{eqnarray*}}

\newcommand{\bi}{\begin{itemize}}
\newcommand{\ei}{\end{itemize}}
\newcommand{\ben}{\begin{enumerate}}
\newcommand{\een}{\end{enumerate}}


\newcommand{\bp}{\begin{problem}}
\newcommand{\ep}{\end{problem}}
\newcommand{\hso}{\hspace{.1in}}
\newcommand{\hst}{\hspace{.2in}}

\newcommand{\noi}{\noindent}

\newcommand{\bc}{\begin{center}}
\newcommand{\ec}{\end{center}}


\hyphenation{op-tical net-works semi-conduc-tor}

%

%
%


\begin{document}


%
\title{Dynamic Team Theory of Stochastic Differential Decision Systems with  Decentralized Noiseless Feedback  Information Structures 
via   Girsanov's Measure Transformation}


\author{ Charalambos D. Charalambous\thanks{C.D. Charalambous is with the Department of Electrical and Computer Engineering, University of Cyprus, Nicosia 1678 (E-mail: chadcha@ucy.ac.cy).} 
}

\maketitle

\begin{abstract}
In this paper we generalized static team theory to dynamic team theory,  in the context of  stochastic differential decision system with decentralized noiseless feedback information structures.  

 We apply Girsanov's theorem to transformed the initial stochastic dynamic team problem to an equivalent team problem, under a reference probability space, with state process and information structures  independent of any of the team decisions. Subsequently,  we show, under certain conditions, that  continuous-time and discrete-time stochastic dynamic team problems,  can be transformed to equivalent  static  team problems, although computing the optimal team strategies using this method might be computational intensive. Therefore, we propose an alternative method, by  deriving team and Person-by-Person (PbP)  optimality  conditions, via the stochastic Pontryagin's maximum  principle,  consisting of forward and backward stochastic differential equations, and a set of conditional variational Hamiltonians with respect to the information structures of  the team members.  
 
 Finally, we relate the backward stochastic differential equation to the value process of the stochastic team problem.

\end{abstract}

  \section{Introduction}
\label{introduction}
In classical stochastic control or decision theory the control actions or decisions applied by the multiple controllers or Decision Makers (DM)  are based on the same information. The underlying assumption is that the acquisition of information is centralized, or the information collected at different observation posts is communicated to each controller or DMs.   
 Classical stochastic control problems are often classified, based on the information available for control actions, into fully observable \cite{benes1971,elliott1977,bismut1978,haussmann1986,bensoussan1992a,ahmed1998,yong-zhou1999} and partially observable \cite{elliott1977,bensoussan1992a,charalambous-hibey1996}. Fully, observable refers to the case when the information structure or pattern available for control actions is generated by the  state process (also called feedback information) or the exogenous state noise process (also called nonanticipative information), and partially observable refers to the case when the information structure available for control actions is a nonlinear function of the state process corrupted by exogenous observation noise process.  

In this paper, we deviate from the classical stochastic control or decision formulation by  consider a system operating over a finite time period $[0, T]$, with  the following features. 

\ben

\item There are $N$  observation posts or stations collecting information; 

\item There are $N$ control stations, each  having direct access to  information collected by at most one   observation post, without delay;

\item  The observation stations may not communicate their information to the other control stations, or they  may communicate their information to  the other control stations by signaling part or all of their information to some of the  control stations with delay;  

\item  The  $N$ control stations may  not   have perfect recall, that is, information which is available at any of the control stations at time $t\in [0, T)$ may not be  available at any future time $\tau \geq t, \tau \in (0,T]$; 

\item  The  control strategies applied at the $N$ control stations have to be coordinated to optimize a common pay-off or reward. 
\een
In the above formulation we have assumed that one observation post is serving one control station without delay, and we allowed the possibility that a subset of the other observation posts  signal their information to any of the control stations they are not serving  subject to delay. Such signaling among the observation posts and control stations is called information sharing \cite{witsenhausen1971,kurtaran1975,yoshikawa1975,varaiya-walrand1978}.  \\   
The elements of the proposed  system of study are the following. 
\begin{align}
&{\mathbb Z}_N \tri \Big\{1, 2, \ldots, N\Big\}: \: \mbox{Set of observation posts/control stations}; \nonumber \\
& x: [0,T] \times \Omega \longrightarrow {\mathbb R}^{n}: \: \mbox{Unobserved state process;}  \nonumber   \\
& W: [0,T] \times \Omega \longrightarrow {\mathbb R}^n: \: \mbox{State exogenous Brownian Motion (BM) process};  \nonumber  \\
&\mbox{For} \:  i=1, \ldots, N, \nonumber \\
& u^i:[0, T ] \longrightarrow {\mathbb A}^i:   \: \mbox{Control process   with action space ${\mathbb A}^i \subseteq {\mathbb R}^{d_i}$ applied at the $i$th control station};   \nonumber \\
& z^i: [0,T] \times \Omega \longrightarrow {\mathbb R}^{k_i}: \: \mbox{ Distributed observation process  collected at the $i$th observation post}; \nonumber\\
& h^i:[0,T] \times C([0,T], {\mathbb R}^n) \longrightarrow {\mathbb R}^{k_i}: \: \mbox{Information functional generating $z^i$ at the $i$th observation post;} \nonumber \\
& {\mathbb U}^{z^i}[0,T]: \:  \mbox{Admissible strategies generating the control actions at the $i$th control station } \nonumber \\
&\mbox{based on $\{z^i(t): t \in [0, T]\}$}; \nonumber \\
& J: {\mathbb A}^{(N)} \longrightarrow (-\infty, \infty]: \: \mbox{Team pay-off or reward.}    \nonumber 
\end{align}
We call as usual the information available as arguments of the control laws, which generate the control actions applied at the $N$ control stations,  ``information Structure or Pattern". 

Suppose, for now,  there is no signaling of information from the observation posts to any of the control station they are not serving, and let $\{z^i(t): 0\leq t \leq T\}$  denote the observation  available to the $i$th control station to generate the control actions   $\{u_t^i: 0 \leq t \leq T\}$ for $i=1,\ldots, N$. Denote the  corresponding  control strategies by ${\mathbb U}^{z^i}[0,T]$, for $i=1,\ldots, N$. 
Given the control strategies, the  performance of the collective decisions or control actions applied by the control stations, the stochastic differential decision  system is formulated using  dynamic team theory, as follows.   
\begin{align}
&\inf \Big\{ J(u^1, \ldots, u^N): (u^1,  \ldots, u^N) \in \times_{i=1}^N {\mathbb U}^{z^i}[0,T]\Big\}, \label{pf1}\\
&J(u^1, \ldots, u^N) = {\mathbb E}^{{\mathbb P}_\Omega} \Big\{ \int_{0}^T \ell(t,x(t),u_t^1(z^1),\ldots, u_t^N(z^N))dt  +  \varphi(x(T))     \Big\} ,  \label{pf2}
\end{align}
subject to stochastic It\^o differential dynamics and  distributed noiseless observations
\begin{align}
dx(t) =& f(t,x(t),u_t^1(z^1), \ldots, u_t^N(z^N))dt + \sigma(t,x(t))dW(t), \hso x(0)=x_0, \; t \in (0,T],  \label{pf3} \\
z^i(t)=&h^i(t,x), \hso t \in [0,T], \; i=1, \ldots, N,  \label{pf4}
\end{align}
where ${\mathbb E}^{{\mathbb P}_\Omega}$ denotes expectation with respect to an underlying   probability space $\Big(\Omega, {\mathbb F}, {\mathbb P}_\Omega\Big)$.  The  stochastic system (\ref{pf3}) may be  
a compact representation of  many interconnected subsystems with states $\{x^i \in {\mathbb R}^{n_i}: i=1, \ldots, N\}, n \tri \sum_{i=1}^N n_i$, aggregated into a single state representation $x \in {\mathbb R}^n$, where $x^i$ represents the state of the local subsystem,   $\{z^i(t): 0\leq t \leq T\}$ its local distributed observation process collected at the $i$th local observation post, and $\{u_t^i(z^i): 0\leq t \leq T\}$ its local control process applied at the $i$th local control station, such that  for each $t \in [0,T]$, the  control law is $u_t^i(z^i) \equiv \mu_t^i(\{z^i(s): 0 \leq s \leq t\})$,  a nonanticipative measurable function $\mu_t^i(\cdot)$ of the $i$th control station   information structure $\{z^i(t): 0\leq t \leq T\}$, for $i=1, \ldots, N$. \\
 We often call the stochastic differential decision system (\ref{pf1})-(\ref{pf4}) with  decentralized noiseless feedback information structures, $\{z^1(t), z^2(t), \ldots, z^N(t): 0\leq t \leq T\}$, a stochastic dynamic team problem, and a strategy $u^o \tri (u^{1,o}, u^{2,o}, \ldots, u^{N,o}) \in \times _{i=1}^N {\mathbb U}^{z^i}[0,T]$ which achieves the infimum in (\ref{pf1}) a team optimal strategy.\\
 Moreover, we call  $u^o \tri (u^{1,o}, u^{2,o}, \ldots, u^{N,o}) \in \times _{i=1}^N {\mathbb U}^{z^i}[0,T]$  a PbP optimal strategy if  
\begin{align}
 J(u^{1,o}, \ldots, u^{N,o}) 
&\leq  J(u^{1,o}, \ldots, u^{i-1,o},u^i, u^{i+1,o}, \ldots, u^{N,o}),  \forall u^i \in {\mathbb U}^i[0,T], \forall i =1,\ldots, N, \label{pp4a}
 \end{align} 
 and the infimum 
subject to constraints (\ref{pf3}), (\ref{pf4}) is achieved. In team theory terminology  $\{u^1, \ldots, u^N\}$ are called the DMs,  agents or members of the team game. 

In this paper, we investigate the stochastic dynamic team problem (\ref{pf1})-(\ref{pf4}), and its generalization when,  there is information sharing from the  observation posts to any of the control stations, and there is  no perfect recall of  information at the control stations.

Recall that a stochastic team problem is called a ``Static Team Problem" 
 if  the information structures available for decisions are not affected by any of the team decisions. Optimality conditions for static team problems are developed by  Marschak and Radner    \cite{marschak1955,radner1962,marschak-radner1972}, and subsequently generalized in   \cite{krainak-speyer-marcus1982a}.  Clearly, since the information structures $\{z^i(t): 0 \leq t \leq T\}, i=1, \ldots, N$ generated by (\ref{pf4}) are affected by the team decisions via the state process $\{x(t):0\leq t \leq T\}$ generated by (\ref{pf3}), the static team  theory optimality conditions given in   \cite{marschak1955,radner1962,marschak-radner1972,krainak-speyer-marcus1982a} donot apply.
 
 On the other hand,  stochastic optimal control theory with full information is developed under a centralized assumption on the information structures.  Therefore, a natural question is whether any of these techniques developed over the last 60 years for centralized stochastic control problems and dynamic games, such as, dynamic programming, stochastic Pontryagin's maximum principle, and martingale methods are  applicable  to stochastic dynamic team problems, and if so how. 

In this paper we apply  techniques from classical strochastic control theory to  generalize Marschak's and Radner's static team theory   \cite{marschak1955,radner1962,marschak-radner1972} to continuous-time stochastic differential decision systems with decentralized  noiseless feedback  information structures, defined by (\ref{pf1})-(\ref{pf4}). Moreover, we discuss generalizations of (\ref{pf1})-(\ref{pf4}), when there is information sharing from the observation posts to any of the control stations, and there is no perfect recall of information at the control stations. Our methodology is based on deriving team and PbP optimality conditions,  using stochastic Pontryagin's maximum principle, by  utilizing the semi martingale representation method due to Bismut \cite{bismut1978}, under a weak formulation of the probability space by invoking   Girsanov's theorem \cite{liptser-shiryayev1977}.   First, we  apply  Girsanov's theorem to   transform the original stochastic  dynamic team problem to  an equivalent   team problem,  under  a reference probability space in which the state process and the information structures are  not affected by any the team decisions. Subsequently,  we show the precise connection between Girsanov's measure transformation and Witsenhausen's notion of  ``Common Denominator Condition" and ``Change of Variables" introduced in \cite{witsenhausen1988} to establish equivalence between static and dynamic team problems. We elaborate on  this connection for  both continuous-time and discrete-time stochastic systems, and we state certain results from static team theory which are directly applicable. However, since the computation of the optimal team strategies via static team theory might be computationally intensive, we proceed further to derive optimality conditions based on stochastic variational methods, by taking advantage of the fact that under the reference measure,  the state process and the information structures do not react to any perturbations of the team decisions. The  optimality conditions are given by a ``Hamiltonian System" consisting of a backward and forward stochastic differential equations, while the optimal team actions of the $i$th team member  are  determined by  a conditional variational Hamiltonian, conditioned on the information structure of the $i$th team member, while the rest are  fixed to their optimal values, for $i=1, \ldots, N$. Finally, we show  the connection between  the backward stochastic differential equation and the value process of the stochastic dynamic team problem. 

We  point out that the  approach we pursued in this paper is different from the various approaches pursued over the years to address stochastic dynamic decentralized decision systems, formulated using team theory  in 
\cite{ho-chu1972,ho-chu1973,kurtaran-sivan1973,sandell-athans1974,yoshikawa1975,kurtaran1975,varaiya-walrand1977,varaiya-walrand1978,bagghi-basar1980,ho1980,krainak-speyer-marcus1982a,krainak-speyer-marcus1982b,walrand-varaiya1983,walrand-varaiya1983a,basar1985,bansal-basar1987,aicardi-davoli-minciardi1987,witsenhausen1988,waal-vanschuppen2000,bamieh-voulgaris2005,teneketzis2006,mahajan-teneketzis2009,mahajan-teneketzis2009a,nayyar-mahajan-teneketzis2011,nayyar-teneketzis2011,nayyar-teneketzis2011a,vanschuppen2011,lessard-lall2011,vanschuppen2012,farokhi-johansson2012,mishra-langbort-dullerud2012,gattami-bernhardsson-rantzer2012}, and our recent treatment in \cite{charalambous-ahmedFIS_Parti2012}. Compared to  \cite{charalambous-ahmedFIS_Parti2012},  in the current paper we apply Girsanov's measure transformation, which allows us to derive the stochastic Pontryagin's maximum principle, for decentralized noiseless feedback information structures, instead of  nonanticipative (open loop) information structures  adapted to a sub-filtration of the fixed filtration  generated by  the Brownian motion $\{W(t): t \in [0,T]\}$ (e.g., $u_t=\mu(t,W)$) considered in  \cite{charalambous-ahmedFIS_Parti2012}. Since feedback strategies are more desirable  compared to nonanticipative strategies, this paper is an improvement to  \cite{charalambous-ahmedFIS_Parti2012}. The only disadvantage is that, unlike    \cite{charalambous-ahmedFIS_Parti2012}, we cannot allow dependence of the diffusion coefficient $\sigma$ on the team decisions. The current paper also generalizes some of the results on centralized partial information   optimality conditions derived in \cite{ahmed-charalambous2012a} to centralized partial feedback  information.    
We note that the case of decentralized noisy information structures is treated in \cite{charalambous-ahmedPISG2013a}, and therefore by combining the results of this paper with those in \cite{charalambous-ahmedPISG2013a}, we can handle any combination of decentralized noiseless and noisy information structures. However, when applying the optimality conditions to determine the optimal team strategies, the main challenge  is the computation of conditional expectations with respect to  the information structures.   The procedure is similar to \cite{charalambous-ahmedFIS_Partii2012}, where various examples  from the communication and control areas are presented,  using decentralized  nonanticipative strategies.

The rest of the paper is organized as follows. In Section~\ref{formulation} we introduce   the stochastic differential team problem and its equivalent re-formulations using the weak Girsanov measure transformation approach. Here, we also establish the connection between Girsanov's theorem and Witsenhausen's ``Common Denominator Condition" and ``Change of Variables" \cite{witsenhausen1988} for stochastic continuous-time and discrete-time dynamical systems.  Further,  we derive the variational equation which we invoke in Section~\ref{optimality}, to   derive the optimality conditions, both under the reference probability measure and under the initial probability measure.  In Section~\ref{cf} we provide  concluding remarks and  comments on future work.\\

 \section{Equivalent Stochastic Dynamic Team Problems}
 \label{formulation}

In this section, we consider the stochastic  dynamic team problem (\ref{pf1})-(\ref{pf4}), and we apply   Girsanov's theorem, to transformed it to an equivalent team problem under  a reference probability measure, in which the information structures are functionals of Brownian motion, and hence  independent of any of the team decisions. 
We will also briefly discuss the  discrete-time counterpart of Girsanov's theorem, and we will show  its equivalence  to  Witsenhausen's so-called ``Common Denominator Condition" and ``Change of Variables" discussed in \cite{witsenhausen1988}. 

 Let $\Big(\Omega,{\mathbb F},  \{ {\mathbb F}_{0,t}:   t \in [0, T]\}, {\mathbb P}\Big)$ denote a complete filtered probability space satisfying the usual conditions, that is,  $(\Omega,{\mathbb F}, {\mathbb P})$ is complete, ${\mathbb F}_{0,0}$ contains all ${\mathbb P}$-null sets in ${\mathbb F}$. Note that  filtrations $\{{\mathbb F}_{0,t} : t \in [0, T]\}$ are mononote in the sense that ${\mathbb F}_{0,s} \subseteq {\mathbb F}_{0,t}$, $\forall 0\leq s \leq t \leq T$. Moreover,  we assume throghout that filtrations $\{ {\mathbb F}_{0,t}: t \in [0, T] \}$ are  right continuous, i.e.,  ${\mathbb F}_{0,t} = {\mathbb F}_{0,t+} \tri \bigcap_{s>t} {\mathbb F}_{0,s}, \forall t \in [0,T)$.  We define ${\mathbb F}_T \tri \{{\mathbb F}_{0,t}: t \in [0, T]\}$. 
  \noi Consider a random process $\{z(t):  t \in [0,T]\}$ taking values in $({\mathbb Z}, {\cal B}({\mathbb Z}))$, where $({\mathbb Z}, d)$ is a metric space, defined   on  the filtered probability space $(\Omega,{\mathbb F},\{{\mathbb F}_{0,t}: t \in [0,T]\}, {\mathbb P})$.  
  The process $\{z(t): t \in [0,T]\}$ is said to be (a) measurable,  if the map $(t,\omega)  \rar  z(t,\omega)$ is  ${\cal B }([0,T]) \times{\mathbb  F}/{\cal B}({\mathbb Z})-$measurable, (b) $\{{\mathbb F}_{0,t}: t \in [0,T]\}-$adapted, if for all $t \in [0,T]$, the map $\omega \rar z(t,\omega)$ is  ${\mathbb F}_{0,t}/{\cal B}({\mathbb Z})-$measurable, (c) $\{{\mathbb F}_{0,t}: t \in [0,T]\}-$progressively measurable if for all $t \in [0,T]$, the map $(s,\omega) \rar z(s,\omega)$ is  $ {\cal B}([0,t]) \otimes {\mathbb F}_{0,t}/{\cal B}({\mathbb Z})-$measurable.  It can be shown that any stochastic process  $\{z(t):  t \in [0,T]\}$  on a filtered probability space  $(\Omega,{\mathbb F},  \{ {\mathbb F}_{0,t}:   t \in [0, T]\}, {\mathbb P})$ which is measurable and adapted has a progressively measurable modification. Unless otherwise specified, we shall say a process $\{z(t):  t \in [0,T]\}$ is $\{{\mathbb F}_{0,t}: t \in [0,T]\}-$adapted if the processes is $\{{\mathbb F}_{0,t}: t \in [0,T]\}-$progressively measurable \cite{yong-zhou1999}.

We use the following notation.

\begin{table}[htbp]\caption{Table of Notation}
\centering
\begin{tabular}{r c p{8cm} }
\toprule
& & ${\mathbb Z}_N  \tri \{1,2,\ldots, N\}$:  Subset of natural numbers.\\
& & $s\tri \{s^1, s^2, \ldots,\ldots, s^N\}$: Set consisting of $N$ elements. \\
& & $s^{-i}=s \setminus\{s^i\}, \hso s=(s^{-i},s^i)$: Set $s$ minus $\{s^i\}$.  \\
& & ${\cal L}({\cal X},{\cal Y})$: Linear transformation mapping a vector space ${\cal X}$ into a vector space ${\cal Y}$. \\
& & $ A^{(i)}$: $i$th column of a map $A \in  {\cal L}({\mathbb R}^n,{\mathbb R }^m), i=1, \ldots, n$. \\
& & $ {\mathbb A}^i \subseteq {\mathbb R}^{d_i}$: Action spaces of controls applied at the $i$th control station, $i=1, \ldots, N$.\\
\bottomrule
\end{tabular}
\label{notations}
\end{table}

Let $C([0,T], {\mathbb R}^n)$ denote the space of continuous real-valued $n-$dimensional  functions defined on the time interval $[0,T]$, and ${\cal B}({\mathbb R}^n)$ its canonical Borel filtration. 
   
  Let $L_{{\mathbb F}_T}^2([0,T],{\mathbb R}^n) \subset   L^2( \Omega \times [0,T], d{\mathbb P}\times dt,  {\mathbb R}^n) \equiv L^2([0,T], L^2(\Omega, {\mathbb R}^n)) $ denote the space of ${\mathbb F}_T-$adapted random processes $\{z(t): t \in [0,T]\}$   such that
\bes
{\mathbb  E}\int_{[0,T]} |z(t)|_{{\mathbb R}^n}^2 dt < \infty,
\ees   
 which is a Hilbert subspace of     $L^2([0,T], L^2(\Omega, {\mathbb R}^n))$.      \\
  Similarly, let $L_{{\mathbb F}_T}^2([0,T],  {\cal L}({\mathbb R}^m,{\mathbb R}^n)) \subset L^2([0,T] , L^2(\Omega, {\cal L}({\mathbb R}^m,{\mathbb R}^n)))$ denote the space of  ${\mathbb  F}_{T}-$adapted $n\times m$ matrix valued random processes $\{ \Sigma(t): t \in [0,T]\}$ such that
 \bes
  {\mathbb  E}\int_{[0,T]} |\Sigma(t)|_{{\cal L}({\mathbb R}^m,{\mathbb R}^n)}^2 dt  \tri  {\mathbb E} \int_{[0,T]} tr(\Sigma^*(t)\Sigma(t)) dt < \infty.
  \ees

Let $B_{{\mathbb F}_T}^{\infty}([0,T], L^2(\Omega,{\mathbb R}^{n}))$ denote the space of ${\mathbb F}_{T}$-adapted ${\mathbb R}^{n}-$ valued second order random processes endowed with the norm topology  $\parallel  \cdot \parallel$ defined by  
\bes
 \parallel \phi \parallel^2  \tri \sup_{t \in [0,T]}  {\mathbb E}|\phi(t)|_{{\mathbb R}^{n}}^2.
 \ees

Next, we introduce conditions on the coefficients $\{f, \sigma, h^i, i=1, \ldots, N\}$, which are partly used to derive the results of this section.

\begin{assumptions}(Main assumptions)
\label{A1-A4}
The  drift $f$, diffusion coefficients $\sigma$, and information functional $h^i$  are     Borel measurable  maps: 
\begin{align}
 f: [0,T] \times {\mathbb R}^n & \times {\mathbb A}^{(N)} \longrightarrow {\mathbb R}^n , \;  \sigma: [0,T] \times {\mathbb R}^n  \longrightarrow {\cal L}({\mathbb R}^n, {\mathbb R}^n), \nonumber \\
 &h^i : [0,T] \times C([0,T],  {\mathbb R}^n) \longrightarrow {\mathbb R}^{k_i}, \hso \forall i \in {\mathbb Z}_N.\nonumber 
 \end{align}
Moreover, 

{\bf (A0)} ${\mathbb A}^i \subseteq {\mathbb R}^{d_i}$ is nonempty, $\forall i \in {\mathbb Z}_N$.

\noi There exists a $K >0$ such that


{\bf (A1)} $|f(t,x,u)|_{{\mathbb R}^n} \leq K (1 + |x|_{{\mathbb R}^n} +|u|_{{\mathbb R}^d}),   \forall t \in [0,T]$;

{\bf (A2)} $|\sigma(t,x)|_{{\cal L}({\mathbb R}^n, {\mathbb R}^n)} \leq K (1+ |x|_{{\mathbb R}^n}  ),    \forall t \in [0,T]$;

{\bf (A3)} $|\sigma(t,x)-\sigma(t,y)|_{{\cal L}({\mathbb R}^n, {\mathbb R}^n)} \leq K |x-y|_{{\mathbb R}^n},  \forall t \in [0,T]$;

{\bf (A4)} $\sigma(t,x)$ is invertible  $\forall (t,x) \in [0,T]\times {\mathbb R}^n$;

{\bf (A5)} $|\sigma^{-1}(t,x)f(t,x,u)|_{{\mathbb R}^n}^2 < K,$ uniformly in $(t, x,u) \in [0, T] \times {\mathbb R}^n \times {\mathbb A}^{(N)}$;

{\bf (A6)} $|\sigma(t,x)|_{ {\cal L}({\mathbb R}^n, {\mathbb R}^n)} \geq K (1 + |x|_{{\mathbb R}^n}^q),  \forall t \in [0, T], \hso q\geq 1$; 

{\bf (A7)} $|\sigma^{-1}(t,x) f(t,x,u)|_{{\mathbb R}^n} \leq K (1 + |x|_{{\mathbb R}^n} +|u|_{{\mathbb R}^d}),   \forall t \in [0,T]$;

{\bf (A8)} $|\sigma^{-1}(t,x)f(t,x,u)- \sigma^{-1}(t,z)f(t,z,v)|_{{\mathbb R}^n} \leq  (1 + |x-z|_{{\mathbb R}^n}  + |u-v|_{{\mathbb R}^d} )$.


\end{assumptions}

\subsection{Equivalent Stochastic Team Problems via Girsanov's }
\label{full}
Next,  we define the dynamic team problem (\ref{pf1})-(\ref{pf4}) using the weak  Girsanov's change of measure approach.

  We start with a canonical space $\Big(\Omega, {\mathbb F}, {\mathbb P}\Big)$ on which   $(x_0, \{W(t): t \in [0,T]\})$  are defined by 

{\bf (WP1)} $x(0)=x_0$:  an ${\mathbb R}^n$-valued  Random Variable with distribution $\Pi_0(dx)$;

{\bf (WP2)} $\{ W(t): t \in [0,T]\}$: an  ${\mathbb R}^{m}$-valued  standard Brownian motion,  independent of $x(0)$;

We introduce the Borel $\sigma-$algebra $  {\cal B}(C[0,T], {\mathbb R}^n))$  on $C([0,T], {\mathbb R}^n)$ generated by $\{W(t): 0\leq t \leq T\}$, and let   ${\mathbb P}^W$ its  Wiener mesure on it. Further, we introduce the filtration ${\cal F}_T^W\tri \{  {\cal F}_{0,t}^W: t \in [0,T]\}$   generated by truncations of $W \in C([0,T], {\mathbb R}^n)$. That is, ${\cal F}_{0,t}^W$ is the sub-$\sigma$-algebra generated by the family of sets
\begin{align}
\Big\{ \{ W \in C([0,T], {\mathbb R}^n): W(s) \in A \}:   \; 0\leq s \leq t, \; A \in {\cal B}({\mathbb R}^n) \Big\}, \hst t \in [0,T],
\end{align}
which implies  $\{{\cal F}_{0,t}^{W}: t \in [0,T]\}$ is the canonical Borel filtration,  ${\cal F}_{0,T}^{W}= {\cal B}(C[0,T], {\mathbb R}^n))$, and ${\cal F}_{0,t}^{W}= {\cal B}_t(C[0,T], {\mathbb R}^n))$ are the truncations for $t \in [0,T]$.
Next, we   define
\begin{align} 
\Omega \tri & {\mathbb R}^n \times C([0,T], \re^n), \hso
{\mathbb F} \tri  {\cal B}({\mathbb R}^n) \otimes {\cal B}( C([0,T], {\mathbb R}^n)), \nonumber \\
 {\mathbb F}_{0,t} \tri &   {\cal B}({\mathbb R}^n) \otimes  {\cal F}_{0,t}^W,  \hso 
{\mathbb P} \tri  \Pi_0 \times {\mathbb P}^W. \nonumber 
\end{align}
On the probability space $(\Omega, {\mathbb F}, \{{\mathbb F}_{0,t}: t \in [0, T]\}, {\mathbb P})$ we 
define the stochastic differential equation
\bea
dx(t)=\sigma(t,x(t))dW(t), \hst x(0)=x_0. \hst t \in (0,T]. \label{fi1}
\eea
Then by Assumptions~\ref{A1-A4}, {\bf (A2), (A3)}, and for any initial condition satisfying  ${\mathbb E} |x(0)|_{{\mathbb R}^n}^q, q \geq 1$,  (\ref{fi1}) has a unique strong solution \cite{liptser-shiryayev1977},  $x(\cdot) \in C([0,T], {\mathbb R}^n)-{\mathbb P}-a.s.$ adapted to the filtration $\{{\mathbb F}_{0,t}: t \in [0, T]\}$, and $x(\cdot) \in B_{{\mathbb F}_T}^\infty([0,T], L^2(\Omega, {\mathbb R}^n))$. \\
We also introduce the $\sigma-$algebra  ${\cal F}_{0,t}^x$  defined by
\begin{align}
{\cal F}_{0,t}^x \tri \sigma\Big\{ \{ x \in C([0,T], {\mathbb R}^n):  
x(s) \in A \}:  0\leq s \leq t, \hso A \in {\cal B}({\mathbb R}^n) \Big\}\equiv {\cal B}_t(C([0,T], {\mathbb R}^n)). \label{borel1}
\end{align}
Hence, ${\cal F}_T^x\tri \{{\cal F}_{0,t}^{x}: t \in [0,T]\}$ is the canonical Borel filtration generated by $x(\cdot) \in C([0,T], {\mathbb R}^n)-{\mathbb P}-a.s.$ satisfying (\ref{fi1}).
\noi From  (\ref{fi1}), and the additional Assumptions~\ref{A1-A4}, {\bf  (A4)}  on $\sigma$, it can be shown  that ${\cal F}_{0,t}^x ={\mathbb F}_{0,t} \equiv {\cal F}^{x(0)} \bigvee {\cal F}_{0,t}^{W}, \forall t \in [0,T]$, and this $\sigma-$algebra is independent of any of the team decisions $u$. Unlike  \cite{charalambous-ahmedFIS_Parti2012}, where we utilize open loop or nonanticipative information structures, here we use feedback information structures. Note that for the feedback information structures to be independent of any of the team decisions $u$, it is necessary that under the reference probability measure, ${\mathbb P}$ the state process $x(\cdot)$ is independent of $u$, which is indeed the case because we have restricted  the class of   diffusion coefficients $\sigma$ to those which are  independent of $u$.

Next we prepare to define three  sets of admissible team strategies.  We define the Borel $\sigma-$algebras generated by projections of $x \in {\mathbb R}^n$ on any of its subspaces say, $x^i \tri {\bf \Pi}^i(x)$, and  the distributed observations process $\{z^i(t) \tri h^i(t,x): t \in [0, T]\}$ as follows. 
\begin{align}
{\cal G}^{x^i(t)} \tri & \sigma\Big\{ \{ x^i \in C([0,T], {\mathbb R}^{n_i}): x^i(t) \in A \}:  
 A \in {\cal B}({\mathbb R}^{n_i}) \Big\}, \hso t \in  [0, T], \hso \forall i \in {\mathbb Z}_N \label{npr} \\
 {\cal G}_{0,t}^{z^i}\tri &z^{i,-1} \Big( {\cal B}_t([0,T]) \otimes {\cal B}_t(C([0,T], {\mathbb R}^n))\Big),\; 
  t \in [0,T], \; \forall i \in {\mathbb Z}_N. \label{borel2}
\end{align}
Further, define  ${\cal G}_{T}^{z^i} \tri \{  {\cal G}_{0,t}^{z^i}: t \in [0, T]\}, {\cal G}_{0,t}^{z^i} \subseteq {\cal F}_{0,t}^x, \forall t \in [0,T]\}$,  the canonical Borel  filtration generated by   $\{ z^i(t): 0\leq t \leq T\}$, for $i=1, \ldots, N$.
Define the delayed  sharing information structure at the $i$th control station by ${\cal G}_{T}^{I^i} \tri \{  {\cal G}_{0,t}^{I^i}: t \in [0, T]\}$, which is  the minimum filtration   generated by the Borel $\sigma-$algebra at the $i$th observation post $\{z^i(s): 0\leq s \leq t \}$, and the delayed sharing information signaling, $\Big\{z^j(s-\eps_j): \eps_j >0,  j  \in    {\cal O}(i), 0\leq s \leq  t \Big\}$, $t \in [0,T]$, from the observation posts ${\cal O}(i) \subset \{1,2, \ldots, i-1, i+1, \ldots, N\}$, to the control station $i$,  for     $i=1, \ldots, N$.  

Next, we define the three classes of information structures we consider in this paper.

\begin{definition}(Noiseless Feedback Admissible Strategies)\\
\label{strategiesf}   
{\bf Without Signaling:} If there is no signaling from the observation posts to any of the other control stations, the set of admissible  strategies at the $i$th control station is defined by  
\begin{align}
 {\mathbb U}^{z^i} &[0, T] \tri   \Big\{   u^i : [0,T] \times \Omega \longrightarrow  {\mathbb A}^i \subseteq {\mathbb R}^{d_i}:  \nonumber \\
 &   u^i  \: \mbox{is $\{{\cal G}_{0,t}^{z^i}: t \in [0, T]\}-$Progressively Measurable (PM)} \nonumber \\ &\mbox{and} \hst    {\mathbb E}\int_{0}^T \Lambda^u(t) |u_t|_{{\mathbb R}^d}^2dt < \infty \Big\}, \hso \forall i \in {\mathbb Z}_N.
 \label{cs1aaa}
     \end{align}
A team strategy is    an $N$ tuple  defined by $(u^1,u^2, \ldots, u^N) \in {\mathbb U}^{(N), z}[0,T] \tri \times_{i=1}^N {\mathbb U}^{z^i}[0,T].$\\
     
{\bf With  Signaling:}  If there is delayed sharing information signaling from the other observation posts,
     the set of admissible strategies  at the $i$th control station  is  defined by
\begin{align}
 {\mathbb U}^{I^i}&[0, T] \tri  \Big\{   u^i : [0,T] \times \Omega \longrightarrow  {\mathbb A}^i \subseteq {\mathbb R}^{d_i}:  \nonumber \\
 & u^i  \: \mbox{is $\{{\cal G}_{0,t}^{I^i}: t \in [0, T]\}-$PM and} \hst    {\mathbb E}\int_{0}^T \Lambda^u(t) |u_t|_{{\mathbb R}^d}^2dt < \infty \Big\}, \;   \forall i \in {\mathbb Z}_N. \label{cs1a}
     \end{align}
 A team strategy is an $N$  defined by  $(u^1,u^2, \ldots, u^N) \in {\mathbb U}^{(N)}[0,T] \tri \times_{i=1}^N {\mathbb U}^{I^i}[0,T].$\\
 
{\bf Without Perfect Recall $\sim$ Markov:} If the  distributed observation process collected  at the  $i$th observation post is $z^i=x^i$, and there is no perfect recall, the 
set of admissible strategies  at the $i$th control station  is  defined by
\begin{align}
 {\mathbb U}^{x^i}&[0, T] \tri \Big\{   u^i : [0,T] \times {\mathbb R}^{n_i} \longrightarrow  {\mathbb A}^i \subseteq {\mathbb R}^{d_i}:  \nonumber \\
 & \mbox{for any $t \in [0,T],$} \hso  u_t^i  \: \: \mbox{is ${\cal G}^{x^i(t)}-$measurable and} \hst    {\mathbb E}\int_{0}^T \Lambda^u(t) |u_t|_{{\mathbb R}^d}^2dt < \infty \Big\}, \;   \forall i \in {\mathbb Z}_N. \label{cs1ag}
     \end{align}
 A team strategy is an $N$  defined by $(u^1,u^2, \ldots, u^N) \in {\mathbb U}^{(N), x}[0,T] \tri \times_{i=1}^N {\mathbb U}^{x^i}[0,T].$     
\end{definition} 
 
The results derived in this paper hold for other variations of the  information structures, such as, control stations without perfect recall based on delayed information ${\cal G}^{x^i(t-\delta_i)},\delta_i \geq 0,  i=1, \ldots, N$, etc.  
 
 The reason for imposing the condition  ${\mathbb E}\int_{0}^T \Lambda^u(t) |u_t|_{{\mathbb R}^d}^2dt < \infty$ will be clarified shortly.  
Thus, an admissible strategy, say,  $u\equiv (u^1, \ldots, u^N) \in {\mathbb U}^{(N)}[0, T]$ is a family of $N$ functions, say, $\Big(\mu_t^1(\cdot), \mu_t^2(\cdot), \ldots, \mu_t^N(\cdot)\Big), t \in [0,T]$,  which are progressively measurable (nonanticipative)  with respect to the delayed sharing noiseless feedback  information structure $\{  {\cal G}_{0,t}^{I^i}: t \in [0, T]\} , i=1,2, \ldots, N$. 

Next, for any  $u \in {\mathbb U}^{(N)}[0,T]$ (we can also consider ${\mathbb U}^{(N),z}[0,T] ,  {\mathbb U}^{(N),x}[0,T]$)     we define on $\Big( \Omega,  {\mathbb F}, \{ {\mathbb F}_{0,t}: t \in [0, T]\},  {\mathbb P}\Big)$ the exponential function 
\begin{align}
&\Lambda^{u}(t) \tri  \exp \Big\{ \int_{0}^t f^{*}(s,x(s),u_s) a^{-1}(s,x(s)) dx(s)  \nonumber \\
&-\frac{1}{2}  \int_{0}^t f^{*}(s,x(s)) a^{-1}(s,x(s)) f(s,x(s))ds \Big\}, \hso a(t,x) =\sigma(t,x) \sigma^*(t,x), \hso t \in [0,T]. \label{fi4}  
\end{align}
Under the additional Assumptions~\ref{A1-A4}, {\bf (A5)},    by It\^o's differential rule   $\{\Lambda^u(t): t \in [0, T]\}$   it is the unique $\{{\mathbb F}_{0,t}: t \in [0,T]\}-$adapted, ${\mathbb P}-$a.s. continuous solution \cite{liptser-shiryayev1977} of the stochastic differential equation  
\bea
d \Lambda^{u}(t) = \Lambda^{u}(t) f^{*}(t,x(t),u_t)a^{-1}(t,x(t)) dx(t), \; \Lambda^{u}(0)=1. \label{fi5}
\eea
Given any $u \in {\mathbb U}^{(N)}[0,T]$ we define the reward  of the team game under $\Big(\Omega, {\mathbb  F}, \{{\mathbb  F}_{0,t}: t \in [0, T]\}, {\mathbb P}\Big)$ by 
 \begin{align} 
  J(u^*) \tri \inf_{u \in {\mathbb U}^{(N)}[0,T]}
    {\mathbb E} \biggl\{   \int_{0}^{T}   \Lambda^u(t) \ell(t,x(t),u_t) dt  +  \Lambda^u(T) \varphi(x(T)\biggr\}, \label{fi6}
  \end{align} 
  where $\ell: [0, T] \times {\mathbb R}^n \times {\mathbb A}^{(N)} \longrightarrow (-\infty, \infty] , \varphi : [0,T] \times {\mathbb  R}^n \longrightarrow (-\infty, \infty]$ will be such that (\ref{fi6}) is finite.\\
  For any admissible strategy $u \in {\mathbb U}^{(N)}[0,T]$, by Assumptions~\ref{A1-A4}, {\bf (A5)}, Novikov condition \cite{liptser-shiryayev1977}     
\bea
{\mathbb E} \exp  \Big\{\frac{1}{2} \int_{0}^t |\sigma^{-1}(s, x(s)) f(s,x(s),u_s)|_{{\mathbb R}^n}^2 ds\Big\} < \infty, \label{novikov}
\eea
  which is sufficient for    $\{ \Lambda^{u}(t): 0 \leq t \leq T\}$ defined by (\ref{fi4}) to be  an $\Big( \{{\mathbb F}_{0,t}: t \in [0,T]\}, {\mathbb P}\Big)$-martingale,  $\forall t \in [0, T]$. Thus, by the martingale property, $\Lambda^u(\cdot)$  has constant expectation, 
 $\int_{\Omega} \Lambda^u(t,\omega) d{\mathbb P}(\omega)=1, \forall t \in [0,T]$, and  therefore, we can utilize $\Lambda^u(\cdot)$ which represents a version of  the Radon-Nikodym derivative, to define  a probability measure ${\mathbb P}^u$ on $\Big(\Omega, {\mathbb F}, \{{\mathbb F}_{0,t}: t \in [0,T]\}\Big)$ by setting 
\bea
\frac{ d{\mathbb P}^u}{ d {{\mathbb P}}} \Big{|}_{ {\mathbb F}_{0,t}} = \Lambda^u(t), \hst  t \in [0, T].    \label{fi7}
\eea     
Moreover, by Girsanov's theorem under the probability space $\Big(\Omega, {\mathbb  F}, \{ {\mathbb F}_{0,t}: t \in [0, T]\},  {\mathbb P}^u\Big)$, the process $\{W^u(t): t\in [0,T]\}$ is a standard Brownian motion and it is defined by
\bea
W^u(t) \tri W(t)-\int_{0}^t \sigma^{-1}(s,x(s))f(s,x(s),u_s)ds,  \label{fi8}
\eea
$ t \in [(0,T]$,
and the distribution of $x(0)$ is unchanged. \\
Therefore, under Assumptions~\ref{A1-A4}, {\bf (A1)-(A5)}   we have constructed  the probability space  $\Big(\Omega,{\mathbb F},\{ {\mathbb F}_{0,t}:   t \in [0, T]\}, {\mathbb P}^u\Big)$,  the  Brownian motion $\{W^u(t): t \in [0,T]\}$ defined on it, and the state process $x(\cdot)$ which  is a weak solution of 
\bea
dx(t)=f(t,x(t),u_t)dt + \sigma(t,x(t))dW^u(t), \hso x(0)=x_0, \label{fi9}
\eea
$t \in (0,T]$, unique in probability law defined via (\ref{fi7}),  having the properies $x(\cdot) \in C([0,T], {\mathbb R}^n)-{\mathbb P}^u-a.s.$, it is $\{{\mathbb F}_{0,t}: t \in [0, T]\}-$adapted,  and $x(\cdot) \in B_{{\mathbb F}_T}^\infty([0,T], L^2(\Omega, {\mathbb R}^n))$.  \\
By substituting (\ref{fi7}) into (\ref{fi6}), under the probability measure ${\mathbb P}^u$,  the team game reward is given by
\bea
J(u^*) =\inf_{u \in {\mathbb U}^{(N)}[0,T]}
     {\mathbb E}^u \Big\{ \int_{0}^T \ell(t,x(t),u_t)dt + \varphi(x(T))\Big\} . \label{fi10}
\eea
From the definition of  the Radon-Nikodym derivative (\ref{fi7}), for any admissible strategy, say, $u\in {\mathbb U}^{(N)}[0,T]$ we also have  ${\mathbb E} \int_{0}^T \Lambda^u(t) |u_t|_{{\mathbb R}^d}^2 dt ={\mathbb E}^u \int_{0}^T  |u_t|_{{\mathbb R}^d}^2 dt <\infty$.

Note that if we start with the stochastic dynamic team problem (\ref{fi9}), (\ref{fi10}), the reverse change of measure is obtained as follows. Define 
\begin{align}
 \rho^{u}(T)\tri \Lambda^{u,-1}(t) 
   = &\exp \Big\{- \int_{0}^t f^{*}(s,x(s),u_s) \sigma^{-1}(s,x(s)) dW^u(s) \nonumber \\
   & -\frac{1}{2}  \int_{0}^t |\Big(\sigma(s,x(s))\Big)^{-1}f(s,x(s),u_s)|_{{\mathbb R}^n}ds \Big\}.
\label{gs11}
\end{align}
Then ${\mathbb E} \Lambda^u(t) = {\mathbb E}^u \rho^u(t) \Lambda^u(t)=1, \hst \forall t \in [0,T],$
and  $\frac{ d{\mathbb P}}{ d {{\mathbb P}^u}} \Big{|}_{ {\mathbb F}_{0,t}} = \rho^{ u}(t), \hst t \in [0,T].$
Consequently, under the reference measure $\Big(\Omega, {\mathbb  F}, \{ {\mathbb F}_{0,t}: t \in [0, T]\},  {\mathbb P}\Big)$ the stochastic dynamic team problem is (\ref{fi1}), (\ref{fi5}), (\ref{fi6}).

\begin{remark}
\label{rem-equiv}
We have shown that  under the Assumptions~\ref{A1-A4}, {\bf  (A1)-(A5)}, and $E| x(0)|_{{\mathbb R}^n}< \infty$, for  any $ u \in {\mathbb U}^{(N)}[0,T]$ then ${\mathbb E}\Big(\Lambda^u(t)\Big)=1, \forall t \in  [0,T]$, and that we have two equivalent  formulations of the stochastic dynamic team problem.

{\bf (1)}  Under the original probability space $\Big(\Omega,{\mathbb F},\{ {\mathbb F}_{0,t}:   t \in [0, T]\}, {\mathbb P}^u\Big)$ the dynamic team problem  is  described by the  pay-off (\ref{fi10}), and the $\{{\mathbb F}_{0,t}: t \in [0,T]\}-$adapted continuous strong  solution  $x(\cdot)$ satisfying  (\ref{fi9}), where  the distributed observations collected at the observation posts  $\{z^i(t)=h^i(t,x): t \in [0,T]\}$ are affected by the team decisions via $\{x(t): t \in [0, T]\}$.

{\bf (2)}  Under the reference probability space $\Big(\Omega,{\mathbb F},\{ {\mathbb F}_{0,t}:   t \in [0, T]\}, {\mathbb P}\Big)$ the dynamic team problem  is described by the pay-off (\ref{fi6}), and the $\{{\mathbb F }_{0,t}: t \in [0, T]\}-$adapted continuous pathwise solution of $(x(\cdot), \Lambda(\cdot))$,  satisfying  (\ref{fi1}), (\ref{fi5}), where $\{z^i(t)=h^i(t,x): t \in [0,T]\}$ is not affected by any of the team  decisions. Note that strong uniqueness holds for solutions of (\ref{fi1}), (\ref{fi5}) because both satisfy the Lipschitz conditions (i.e. Assumptions~\ref{A1-A4}, {\bf  (A3), (A5)} hold).
\end{remark}

\begin{remark}
\label{rem-cond}
The Assumptions~\ref{A1-A4}, {\bf (A5)} is satisfied if the following alternative conditions hold. 

{\bf (A5)(a)}   {\bf (A4), (A6) } holds and either (i) {\bf (A1)} is replaced by  $|f(t,x,u)|_{{\mathbb R}^n} \leq K (1 + |x|_{{\mathbb R}^n}),  K>0,  \forall t \in [0,T],$
or (ii) ${\mathbb A}^{(N)}$ is bounded;


\end{remark}

\begin{remark}
\label{wits}
 The Girsanov's measure transformation  is precisely the continuous-time counterpart of so called ``Common Denominator Condition and Change of Variables" (i.e. [Sections~4, 5,  \cite{witsenhausen1988}]), of Witsenhausen's discrete-time stochastic control problems with finite decisions. Witsenhausen in \cite{witsenhausen1988}  called  any discrete-time stochastic dynamical decentralized decision problem which can be transformed via the ``Common Denominator Condition and Change of Variables"  to observations which are not affected by any of the team decisions ``Static". The main point we wish to  make regarding \cite{witsenhausen1988} is the following.
 
 Contrary to the belief in \cite{witsenhausen1988},   and although the distributed observations and information structures of the equivalent stochastic team  decision problem, under the reference probability measure ${\mathbb P}$, are not affected by any of the team decisions, this does not mean than   Marschak's and Radner's  \cite{marschak1955,radner1962,marschak-radner1972} static team theory optimality conditions can be easily  applied to compute the optimal team strategies of  the equivalent team problem. We further elaborate on this point  in Section~\ref{path-i}.   
\end{remark}

The main problem with developing the team and PbP optimality conditions based on variational methods, under the original probability space $\Big(\Omega,{\mathbb F},\{ {\mathbb F}_{0,t}:   t \in [0, T]\}, {\mathbb P}^u\Big)$,  is  the definition of admissible strategies, which states that $\{u_t^i: t \in [0,T]\}$ is adapted to feedback information  $\{ {\cal G}_{0,t}^{I^i}: t \in [0,T]\} \subset \{ {\cal F}_{0,t}^x: t \in [0,T]\}, i=1, \ldots, N$, and hence  affected by the team decisions. Therefore, if one invokes weak or needle variations of $u \in {\mathbb U}^{(N)}[0,T]$, to compute the Gateaux derivative of the pay-off, then one needs the variational equation of the unobserved state $x(\cdot)$ satisfying (\ref{fi10}), which implies that one should  differentiate $\{u_t^i\equiv \mu(t,I^i): t \in [0,T]\}, i=1, \ldots, N$ with respect to $x$, because $\{I^1(t), \ldots, I^N(t): t \in [0,T]\}$ are affected by the decisions.  Therefore, the classical methods which assume nonanticipative strategies  adapted to   $\{ {\cal F}_{0,t}^W: t \in [0,T]\}$ \cite{yong-zhou1999}  or any sub-$\sigma-$algebra of this \cite{ahmed-charalambous2012a}, in general do not apply.  One approach to circumvent this technicality is to show that feedback strategies are dense in noanticipatve or open loop  strategies, and the pay-off is continuously dependent on $u \in {\mathbb U}^{(N)}[0,T]$ as in \cite{charalambous-ahmedFIS_Parti2012}. Another approach is to use Girsanov's theorem.

Before we proceed we show, in the next theorem,  that Girsanov's change of probability measure, which  is based on identifying sufficient conditions so that   $\{ \Lambda^{u}(t): 0 \leq t \leq T\}$  is an $\Big(\{{\mathbb F}_{0,t}: t \in [0,T]\}, {\mathbb P}\Big)$-martingale,  holds under more general conditions than the uniform bounded condition given by Assumptions~\ref{A1-A4}, {\bf (A5)}. 

\begin{theorem} (Equivalence of Dynamic Team Problems)\\
\label{lemma-g}   
Suppose ${\mathbb E} | x(0)|_{{\mathbb R}^n} < \infty$, Assumptions~\ref{A1-A4}, {\bf (A1),  (A2), 
 (A7)}, hold, and consider any of the admissible strategies of Definition~\ref{strategiesf}.\\
Then   
${\mathbb E} \Big(\Lambda^u(t)\Big)=1, \forall t \in [0,T]$, and the dynamic team problem with pay-off (\ref{fi10}) subject to  $x(\cdot)$ satisfying  (\ref{fi9}) is equivalent to the dynamic team problem with pay-off  (\ref{fi6}) with $(x(\cdot), \Lambda(\cdot))$ satisfying (\ref{fi1}), (\ref{fi5}).
\end{theorem}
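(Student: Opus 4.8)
The plan is to reduce everything to the single assertion $\mathbb{E}\big(\Lambda^u(t)\big)=1$ for all $t\in[0,T]$ and all admissible $u$; once this is in hand the rest is already done in the text, since $\Lambda^u(\cdot)$ then defines $\mathbb{P}^u$ through \eqref{fi7}, Girsanov's theorem produces the Brownian motion $W^u$ of \eqref{fi8} and the weak solution $x(\cdot)$ of \eqref{fi9}, and substituting \eqref{fi7} into \eqref{fi6} yields \eqref{fi10}, exactly as in the discussion preceding Remark~\ref{rem-equiv}. So I would concentrate on the true‑martingale property of $\Lambda^u$. From \eqref{fi5} together with \eqref{fi1} one has $d\Lambda^u(t)=\Lambda^u(t)\,\big(\sigma^{-1}(t,x(t))f(t,x(t),u_t)\big)^{*}dW(t)$ under $\mathbb{P}$, so $\Lambda^u(\cdot)$ is a nonnegative local martingale, hence a supermartingale, and $\mathbb{E}\big(\Lambda^u(t)\big)\le\Lambda^u(0)=1$; the task is to exclude strict inequality, i.e. loss of mass, which under the present linear‑growth Assumptions~\ref{A1-A4} \textbf{(A1), (A2), (A7)} (rather than the uniform bound \textbf{(A5)}) is not automatic and must be argued by localization.

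For $k\in\mathbb{N}$ I would set
\[
\tau_k\tri\inf\Big\{t\in[0,T]:\ |x(t)|_{\mathbb{R}^n}>k\ \ \text{or}\ \ \textstyle\int_0^t|u_s|_{\mathbb{R}^d}^2\,ds>k\Big\}\wedge T .
\]
Since the solution $x(\cdot)$ of \eqref{fi1} has $\mathbb{P}$‑a.s. continuous paths and $\Lambda^u(\cdot)$ is $\mathbb{P}$‑a.s. continuous and strictly positive, so that $\inf_{[0,T]}\Lambda^u>0$ a.s., the integrability requirement $\mathbb{E}\int_0^T\Lambda^u(t)|u_t|_{\mathbb{R}^d}^2\,dt<\infty$ built into Definition~\ref{strategiesf} forces $\int_0^T|u_t|_{\mathbb{R}^d}^2\,dt<\infty$ $\mathbb{P}$‑a.s.; together with $\sup_{[0,T]}|x(t)|_{\mathbb{R}^n}<\infty$ this shows that $\tau_k=T$ for all $k$ large enough, $\mathbb{P}$‑a.s. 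On $[0,\tau_k]$, \textbf{(A7)} gives $|\sigma^{-1}(s,x(s))f(s,x(s),u_s)|_{\mathbb{R}^n}^2\le 3K^2\big(1+|x(s)|_{\mathbb{R}^n}^2+|u_s|_{\mathbb{R}^d}^2\big)$, hence $\int_0^{t\wedge\tau_k}|\sigma^{-1}(s,x(s))f(s,x(s),u_s)|_{\mathbb{R}^n}^2\,ds\le 3K^2\big(T+k^2T+k\big)$ is deterministically bounded; therefore Novikov's condition \eqref{novikov} holds for the stopped exponential, $\{\Lambda^u(t\wedge\tau_k):t\in[0,T]\}$ is a genuine $\big(\{\mathbb{F}_{0,t}\},\mathbb{P}\big)$‑martingale, and $\mathbb{E}\big(\Lambda^u(t\wedge\tau_k)\big)=1$. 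Define $\mathbb{P}^u_k$ on $\mathbb{F}_{0,t}$ by $d\mathbb{P}^u_k/d\mathbb{P}=\Lambda^u(t\wedge\tau_k)$; by Girsanov, under $\mathbb{P}^u_k$ the process $W^u_k(t)\tri W(t)-\int_0^{t\wedge\tau_k}\sigma^{-1}(s,x(s))f(s,x(s),u_s)\,ds$ is Brownian and $x(\cdot)$ satisfies \eqref{fi9} on $[0,\tau_k]$.

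Next I would decompose, using $\Lambda^u(t\wedge\tau_k)=\Lambda^u(t)$ on $\{\tau_k=T\}$:
\[
1=\mathbb{E}\big(\Lambda^u(t\wedge\tau_k)\big)=\mathbb{E}\big[\Lambda^u(t)\mathbf{1}_{\{\tau_k=T\}}\big]+\mathbb{E}\big[\Lambda^u(t\wedge\tau_k)\mathbf{1}_{\{\tau_k<T\}}\big].
\]
Here $\mathbb{E}\big[\Lambda^u(t)\mathbf{1}_{\{\tau_k=T\}}\big]\uparrow\mathbb{E}\big(\Lambda^u(t)\big)$ by monotone convergence, while the last term is bounded by $\mathbb{E}\big[\Lambda^u(t)\mathbf{1}_{\{\tau_k<T\}}\big]+\mathbb{P}^u_k(\tau_k<T)$, whose first summand tends to $0$ by dominated convergence ($\mathbf{1}_{\{\tau_k<T\}}\downarrow0$ a.s., dominated by $\Lambda^u(t)\in L^1$). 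Thus $\mathbb{E}\big(\Lambda^u(t)\big)=1$ will follow once $\mathbb{P}^u_k(\tau_k<T)\to0$, and this is the heart of the matter. I would obtain two $k$‑free bounds. First, the stopped‑martingale identity $\mathbb{E}\big[\Lambda^u(t\wedge\tau_k)g\big]=\mathbb{E}\big[\Lambda^u(s\wedge\tau_k)g\big]$ for nonnegative $\mathbb{F}_{0,s}$‑measurable $g$ gives $\mathbb{E}^u_k\int_0^{t\wedge\tau_k}|u_s|_{\mathbb{R}^d}^2\,ds=\int_0^t\mathbb{E}\big[\mathbf{1}_{\{s<\tau_k\}}\Lambda^u(s)|u_s|_{\mathbb{R}^d}^2\big]ds\le\mathbb{E}\int_0^T\Lambda^u(s)|u_s|_{\mathbb{R}^d}^2\,ds=:C_0<\infty$. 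Second, plugging this together with \textbf{(A1), (A2), (A7)} and $\mathbb{E}|x(0)|_{\mathbb{R}^n}<\infty$ into the It\^o formula for $x(\cdot\wedge\tau_k)$ under $\mathbb{P}^u_k$ — applied, to circumvent the absence of a second moment on $x(0)$, to the smooth surrogate $(1+|x|_{\mathbb{R}^n}^2)^{1/2}$, and combined with Burkholder--Davis--Gundy, an AM--GM splitting of the resulting $\tfrac12$‑power bracket term, and Gronwall's lemma, with every constant tracked to be independent of $k$ — yields $\mathbb{E}^u_k\big[\sup_{s\le t\wedge\tau_k}|x(s)|_{\mathbb{R}^n}\big]\le C_1$. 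By the definition of $\tau_k$ and Chebyshev's inequality, $\mathbb{P}^u_k(\tau_k<T)\le\mathbb{P}^u_k\big(\sup_{s\le\tau_k}|x(s)|_{\mathbb{R}^n}\ge k\big)+\mathbb{P}^u_k\big(\int_0^{\tau_k}|u_s|_{\mathbb{R}^d}^2ds\ge k\big)\le (C_1+C_0)/k\to0$, which finishes the proof of $\mathbb{E}\big(\Lambda^u(t)\big)=1$, and hence, by the argument of the first paragraph, of the claimed equivalence.

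The step I expect to be the genuine obstacle is the uniform‑in‑$k$ first‑moment estimate $\mathbb{E}^u_k\big[\sup_{s\le t\wedge\tau_k}|x(s)|_{\mathbb{R}^n}\big]\le C_1$ under the \emph{varying} tilted measures $\mathbb{P}^u_k$: only the $\Lambda^u$‑weighted $L^2$ bound on $u$ and the $L^1$ bound on $x(0)$ are available, so the estimate cannot be run in $L^2$, and one must be careful that the localizing truncation of $|x|$ is used only to justify the computations and never enters the final constant. Everything else — the supermartingale inequality, the Novikov step on $[0,\tau_k]$, and the passages to the limit — is routine. I would also remark that the argument is insensitive to which of the three admissible classes of Definition~\ref{strategiesf} is used, since in all cases it is the common condition $\mathbb{E}\int_0^T\Lambda^u(t)|u_t|_{\mathbb{R}^d}^2\,dt<\infty$ that drives $\tau_k\to T$ and produces $C_0$; alternatively, one could shortcut Novikov and the localization by invoking a standard linear‑growth martingale criterion for likelihood‑ratio exponentials from \cite{liptser-shiryayev1977}.
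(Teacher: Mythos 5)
Your proposal is correct in outline, but it reaches $\mathbb{E}\big(\Lambda^u(t)\big)=1$ by a genuinely different mechanism than the paper. You localize: stopping times $\tau_k$ built from $|x|$ and $\int_0^\cdot|u_s|^2ds$, Novikov on $[0,\tau_k]$, the stopped tilted measures $\mathbb{P}^u_k$, and a ``no loss of mass'' argument that reduces everything to $\mathbb{P}^u_k(\tau_k<T)\to 0$, proved by uniform-in-$k$ moment bounds under the varying measures $\mathbb{P}^u_k$ plus Chebyshev. The paper never introduces a stopping time or invokes Novikov/BDG: it works entirely under the reference measure $\mathbb{P}$, applies It\^o's rule to the bounded truncations $\Lambda^u(t)|x(t)|^2/\big(1+\epsilon\Lambda^u(t)|x(t)|^2\big)$ and $\Lambda^u(t)/\big(1+\epsilon\Lambda^u(t)\big)$, uses Gronwall and Fatou to obtain the weighted second-moment bound $\mathbb{E}\big\{\Lambda^u(t)|x(t)|^2\big\}\le C$, and then exploits the exact identity
\begin{align}
\mathbb{E}\,\frac{\Lambda^u(t)}{1+\epsilon\Lambda^u(t)}=1-\mathbb{E}\int_0^t\frac{\epsilon(\Lambda^u(s))^2 f^*a^{-1}f}{\big(1+\epsilon\Lambda^u(s)\big)^3}\,ds ,\nonumber
\end{align}
whose correction term is dominated, via \textbf{(A7)}, by $C\,\Lambda^u(s)\big(1+|x(s)|^2+|u_s|^2\big)$ and hence vanishes as $\epsilon\to 0$ by dominated convergence; monotone convergence on the left then forces $\mathbb{E}\,\Lambda^u(t)=1$. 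Both arguments are driven by the same fuel, the admissibility requirement $\mathbb{E}\int_0^T\Lambda^u(t)|u_t|^2dt<\infty$. What each buys: the paper's truncation device is computationally self-contained, exhibits the mass defect in closed form, and avoids all stochastic-interval bookkeeping; your route is probabilistically more transparent (non-explosion of the tilted dynamics before $T$) and, because it runs in $L^1$ with BDG, genuinely uses only $\mathbb{E}|x(0)|_{\mathbb{R}^n}<\infty$ — note that the paper's Gronwall step is seeded with $\mathbb{E}\big\{|x(0)|^2/(1+\epsilon|x(0)|^2)\big\}$, which is uniform in $\epsilon$ only when $x(0)$ has a finite second moment, so your version is arguably better matched to the stated hypothesis. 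The price is that your hardest step, the $k$-independent bound $\mathbb{E}^u_k\big[\sup_{s\le t\wedge\tau_k}|x(s)|\big]\le C_1$ under the changing measures, is only sketched; the sketch (It\^o on $(1+|x|^2)^{1/2}$, BDG, the splitting $\big(\int(1+|x|)^2ds\big)^{1/2}\le\frac{\delta}{2}\sup(1+|x|)+\frac{1}{2\delta}\int(1+|x|)ds$ with absorption justified by $\sup_{s\le\tau_k}|x(s)|\le k$, then Gronwall) is the standard and correct way to finish, but it should be written out if this is to replace the paper's argument.
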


\begin{proof} See Appendix.

\end{proof}

Thus, Theorem~\ref{lemma-g} is a significant generalization of the equivalence between the two stochastic team problems. 

\subsection{Function Space Integration: Equivalence of Static and Dynamic Team Problems}
\label{path-i}
In this section, we show the precise connection between Girsanov's measure transformation and  Witsenhausen's  ``Common Denominator Condition and Change of Variables" \cite{witsenhausen1988},    for the continuous-time stochastic dynamic team  problem (\ref{pf1})-(\ref{pf4}), and  for general  discrete-time stochastic dynamic team problems.\\

\noi{\bf Continuous-Time Stochastic Dynamic Team Problems.}

For simplicity we introduce the following assumptions.

\begin{assumptions}
\label{ass-drift}
The Borel measurable diffusion coefficient $\sigma$ in (\ref{fi9}) is replaced by\footnote{$G$ can be allowed to depend on $x$.}  

{\bf (A9)} $G: [0,T] \longrightarrow {\cal L}({\mathbb R}^n, {\mathbb R}^n)$ (i.e. it is independent of $(x,u) \in {\mathbb R}^n \times {\mathbb A}^{(N)}$), $G^{-1}$ exists and both are  uniformly bounded;

{\bf (A10)} ${\mathbb E}|x(0)|_{{\mathbb R}^n} < \infty$ and Assumptions~\ref{A1-A4}, {\bf (A1)} hold. 


\end{assumptions}

Under Assumptions~\ref{ass-drift}, by Theorem~\ref{lemma-g} we can apply  Girsanov's theorem to obtain  the equivalent stochastic dynamic team problem under the   probability space $\Big(\Omega, {\mathbb F}, \{ {\mathbb F}_{0,t}: t \in [0,T]\}, {{\mathbb P}}\Big)$, such that  $\{(x(t), \Lambda^u(t): t \in [0,T]\}$ is  defined by  
\begin{align}
x(t) =& x(0)+ \int_{0}^t G(s) dW(s) \equiv x(0) + \widehat{W}(t), \nonumber \\
 \Lambda^u(t) =&1+ \int_{0}^t f^*(s,x(s), u_s) \Big(G(s) G^{*}(s)\Big)^{-1}dx(s),   \label{path4}
\end{align}
where $\{ W(t): t \in [0,T]\}$ is a standard Brownian motions, and for any $u \in {\mathbb U}^{(N)}[0,T]$ the  the pay-off givan  is
 \begin{align} 
  J(u) \tri
    {\mathbb E} \biggl\{   \int_{0}^{T}   \Lambda^u(t)  \ell(t,x(t),u_t) dt +  \Lambda^u(T)  \varphi(x(T)\biggr\}.  \label{path5}
  \end{align} 
Note that in (\ref{path5}),  ${\mathbb E}\{\cdot\}$ denotes expectation with respect to the product measure ${\mathbb P}(d\xi,dw)\tri \Pi_{0}(d\xi)\times {\cal W}_{\widehat{W}}(dw)$, where ${\cal W}_{\widehat{W}}(\cdot)$ is the Wiener measure on the Brownian motion sample paths  $\{\widehat{W}(t): t \in [0,T]\} \in C([0,T], {\mathbb R}^n)$. \\
Now, we consider the equivalent transformed pay-off (\ref{path5}), and integrate by parts the stochastic integral term appearing in $\Lambda^u(\cdot)$, and then  we define
\bea
\overline{\ell}(t,\xi,\widehat{W}, u)\tri  \Lambda^u \ell(t,x,u), \hso \overline{\varphi}(T,\xi,\widehat{W}, u) \tri \Lambda^u  \varphi(x). \label{path6}
\eea
Then the transformed pay-off (\ref{path5})  is given by
\begin{align} 
&  J(u) \tri
    \int_{{\mathbb R}^n\times C([0,T], {\mathbb R}^n) } \biggl\{   \int_{0}^{T}   \overline{\ell}(t,\xi,\{\widehat{W}(s),u_s: 0\leq s \leq t\}) dt \nonumber \\
  &  +  \overline{\varphi}(T,\xi,\{\widehat{W}(t), u_t: 0 \leq t \leq T\}) \biggr\}  {\cal W}_{\widehat{W}}(d\widehat{W}) \times \Pi_0(d\xi)  \label{path7}
  \end{align} 
Note that the equivalent pay-off (\ref{path7}) is expressed as a function space integral with respect to a Wiener measure. Such function space integrations  are  discussed in  nonlinear mean-square error nonlinear filtering  problems in \cite{benes1981,charalambous-elliott1998}. Moreover, expression (\ref{path7}) is precisely the continuous-time analog of Witsenhausen's  main theorem [Theorem~6.1, \cite{witsenhausen1988}], which is easily verified by comparing (\ref{path7}) and [equation (6.4),  \cite{witsenhausen1988}].  In fact, $\Lambda^u(\cdot)$   is the common denominator condition, and the representation of the pay-off as a functional of $\omega(t)=(x(0), W(t)\tri x(0)+ \widehat{W}(t)), t \in [0,T]$, is the change of variables. Since (\ref{path7}) is a functional of $(x(0), \widehat{W}(\cdot), u^1, \ldots, u^N)$ and  $u^i$'s are not affected by any of the team decisions, because $u_t^i = \mu_t^i(I^i)$, and  $I^i(\cdot)$  are functionals of $(x(0), \widehat{W}(\cdot))$, then we  can proceed further to derive team optimality conditions using static team theory, by  computing the Gateaux derivative  of (\ref{path7}) at $u^{i,o} $ in  the direction of $u^i-u^{i,o}, i=1, \ldots$, as in  \cite{radner1962,krainak-speyer-marcus1982a}. However, for complicated problems  this procedure might not be  tractable even for the simplified case  $\ell=0$, because it will involve function space integrations with respect to the Wiener measure. \\

\noi{\bf Discrete-Time Stochastic Dynamic Team Problems.}

Next,  we consider a discrete-time generalized version  of the stochastic differential decentralized decision problem (\ref{path4}), (\ref{path5}), under the reference probability measure ${\mathbb P}$.   Let ${\mathbb N}_0 \tri \{0, 1, 2, \ldots\}, {\mathbb N}_1 \tri \{1, 2, \ldots\} $ denote time-index sets.

 We start with  a reference probability space $\Big(\Omega, {\mathbb F}, \{ {\mathbb F}_{0,n}: n \in {\mathbb N}_0\}, {{\mathbb P}}\Big)$, under which $\{x(n): n \in {\mathbb N}_0\}$ is a sequences of independent RVs, having Normal densities denoted by $\lambda_n (\cdot) \sim {\mathbb N}(0, G(n)G^*(n))$, for $ n \in {\mathbb N}_1$,  $x(0) \sim \Pi_0(dx)$,  $\{{\mathbb F}_{0,n}: n \in {\mathbb N}_0\}$ is the  filtration  generated by the completion of the $\sigma-$algebra $\sigma\{x(k): k \leq n\}, n \in {\mathbb N}_0$, and  $\{{\cal G}_{0,n}^{z^i}: n \in {\mathbb N}_0\}$ is the  filtration  generated by the completion of the  $\sigma-$algebra $\sigma\{z^i(k) \tri h^i(k, x(0), x(1), \ldots, x(k)): k \leq n\}, n \in {\mathbb N}_0, i=1, \ldots, N$. \\
Next, we define the team strategies which donot assume ``Perfect Recall".  Suppose  for each $n \in {\mathbb N}_0$, $u^i(n) \in {\mathbb A}_n^i$, and that $u^i(n)$ is measurable with respect to ${\cal E}_n^i \subset  \bigvee_{i=1}^N {\cal G}_{0,n}^{z^i}$, where ${\cal E}_n^i$ is not  nested, for $i=1, \ldots, N$, that is, ${\cal E}_n^i \nsubseteq  {\cal E}_{n+1}^i, n =0, 1, \ldots$, and hence  all control station donot have perfect recall. For each $n \in {\mathbb N}_0$,  such information structures can be generated by   ${\cal E}_n^i \tri \sigma\Big\{ {\Pi }_n^i\Big(\{z^j(0), z^j(1), \ldots, z^j(n): j=1, \ldots, N\}\Big)\Big\}$, where ${\Pi}_n^i(\cdot)$ is the projection   to a subset of $\{z^j(0), z^j(1), \ldots, z^j(n): j =1, \ldots, N\}$,  for $ i=1, \ldots, N$. \\
  We denote the set of admissible strategies at the $i$th control station  at time $n \in {\mathbb N}_0$, by $\gamma_n^i(\cdot) \in {\mathbb U}^i[n]$,  their $T-$tuple by $\gamma_{[0,T-1]}^i (\cdot)\tri (\gamma_0^i(\cdot), \ldots, \gamma_{T-1}^i(\cdot)) \in {\mathbb U}^i[0,T-1] \tri \times_{j=0}^{T-1} {\mathbb U}^i[j]$, $i=1, \ldots, N$, and  $\gamma_{[0,T-1]}(\cdot) \tri (\gamma_{[0,T-1]}^1, \ldots, \gamma_{[0,T-1]}^N(\cdot) \in {\mathbb U}^{(N)}[0,T-1] \tri \times_{i=1}^N {\mathbb U}^i[0,T-1]$.

Consider the following measurable functions.
\begin{align}
&f(k, \cdot, \cdot):  \times_{i=0}^k ({\mathbb R}^n) \times_{i=1,j=0}^{N,k} ( {\mathbb A}_j^i) \longrightarrow {\mathbb R}^n, \hso k \in {\mathbb N}_0, \nonumber \\
& h^i(k,\cdot): \times_{i=0}^k( {\mathbb R}^n) \longrightarrow {\mathbb R}^{k_i}, \hso k \in {\mathbb N}_0, \hso i=1, \ldots, N . \nonumber 
\end{align}
  For any admissible decentralized strategy   $u \equiv \gamma_{[0,T-1]}  \in {\mathbb U}^{(N)}[0,T-1]$, we define the following quantity.

\begin{align}
\Lambda_{0,n+1}^u\tri & \prod_{k=0}^n    \frac{\lambda_{k+1}(x(k+1)-f(k,x(0), x(1), \ldots, x(k),u(k)))}{\lambda_{k+1}(x(k+1))},  \nonumber \\
 \Lambda_{0,0}^u=&1, \; n \in {\mathbb Z}_+. \label{dt1} 
\end{align}
Under the reference probability space $\Big(\Omega, {\mathbb F}, \{ {\mathbb F}_{0,n}: n \in {\mathbb N}_0\}, {{\mathbb P}}\Big)$, define the team pay-off 
\begin{align}
J(u)=& {\mathbb E} \Big\{  \Lambda_{0,T}^u(x(0),u(0),\ldots, x(T-1), u(T-1), x(T)) \Big(\sum_{k=0}^{T-1} \ell(k,x(k),u(k)) + \varphi(x(T)) \Big) \Big\} \label{dt6a} \\
=& \int \Big\{\Lambda_{0,T}^u(x(0),u(0),\ldots, x(T-1), u(T-1), x(T)) \Big(\sum_{k=0}^{T-1} \ell(k,x(k),u(k)) + \varphi(x(T)) \Big) \Big\}\nonumber \\
&  \prod_{k=0}^{T-1} \lambda_{k+1}(x(k+1)) dx(k+1)  \Pi_{0}(dx(0))  \label{dt7}  \\
& \equiv  \int L(\gamma_{[0,T-1]}, x(0), x(1), \ldots, x(T)) . \prod_{k=0}^{T-1} \lambda_{k+1}(x(k+1)) dx(k+1)  \Pi_{0}(dx(0)) \equiv J(\gamma_{[0,T-1]}) . \label{stdt7}
\end{align}
Clearly,  the problem 
\bea
\inf \Big\{  J(\gamma_{[0,T-1]}): \gamma_{[0,T-1]}  \in {\mathbb U}^{(N)}[0,T-1]\Big\}, \label{stp8}
\eea
 is a static team problem, because the information structure available for decisions are nonlinear measurable functions of $(x(0), \ldots, x(T))$, which are not affected by any of the team decisions. This is the transformed equivalent stochastic team problem of a certain stochastic dynamic team problem which we introduce next.  

Since it can be shown that $\{\Lambda_{0,n}^u: n \in {\mathbb N}_0\}$ is an $\Big(\Omega, {\mathbb F}, \{ {\mathbb F}_{0,n}: n \in {\mathbb N}_0\}, {{\mathbb P}}\Big)-$martingale, with $\int  \Lambda_{0,n}^u(\omega) d{\mathbb P}(\omega)=1$, then  we can define the  probability measure ${\mathbb P}^u$ on  $\Big(\Omega, \{ {\mathbb F}_{0,n}: n \in {\mathbb N}_0\}\Big)$ by setting
\begin{align}
\frac{ d {\mathbb P}^u}{ d {{\mathbb P}}} \Big{|}_{ {\mathbb F}_{0,n}} = \Lambda^u(n), \hst \forall n \in {\mathbb N}_0. 
 \label{dt2}
\end{align} 
Then under this  probability measure ${\mathbb P}^u$, the process defined by
\begin{align}
w^u(n+1) \tri x(n+1) - f(n,x(0), \ldots, x(n),u(n)),\;  n \in {\mathbb N}_0, \label{dt3}
\end{align}
is a sequences of independent normally distributed RVs with densities, $\lambda_n(\cdot), n \in {\mathbb N}_1$. Therefore, under the probability space $\Big(\Omega, {\mathbb F}, \{ {\mathbb F}_{0,n}: n \in {\mathbb Z}_+\}, {{\mathbb P}^u}\Big)$, we have 
\begin{align}
 x(n+1) =& f(n,x(0), \ldots, x(n),u(n))+ w^u(n+1), \label{dt4} \\
 z^i(n)= & h^i(n,x(0), \ldots, x(n)), \hso n \in {\mathbb N}_0, i=1, \ldots, N. \label{dt5} 
\end{align}
Then, for any admissible discrete-time team strategy $u \equiv \gamma_{[0,T-1]} \in {\mathbb U}^{N}[0,T-1]$, under measure ${\mathbb P}^u$ the team pay-off is 
\begin{align}
J(u)= {\mathbb E}^u \Big\{ \sum_{k=0}^{T-1} \ell(k,x(k),u(k)) + \varphi(x(T))\Big\}. \label{dt6}
\end{align}
Thus, we have shown that the dynamic team problem of minimizing  pay-off (\ref{dt6}), subject to  (\ref{dt4}), (\ref{dt5}) can be transformed to the equivalent static team problem defined by  pay-off (\ref{stp8}),   where $\{x(n): n \in {\mathbb N}_0\}$ is  an  independent sequence, distributed according to  $x(0) \sim \Pi_0(\cdot), \{ \lambda_{n+1}(\cdot): n \in {\mathbb N}_0\}$,  and the information structures  are  functions of the independent sequence $\{x(n): n \in {\mathbb N}_0\}$. \\
Consequently, we have identified the precise connection between the so-called ``Common Denominator Conditions and Change of Variables" described by Witsenhausen in \cite{witsenhausen1988}), for a discrete-time stochastic dynamic team problem to be equivalent to a static team problem.    

Therefore, we conclude that the static team theory by Marschak and Radner \cite{marschak1955,radner1962} and its generalization in \cite{krainak-speyer-marcus1982a}, are directly applicable to the transformed problem (\ref{stp8}),  in a higher dimension consisting of $TN$ decision strategies (because we have assumed no perfect recall of information at all control stations). Thus, any of the  theorems found in \cite{krainak-speyer-marcus1982a}  are applicable.

Finally, we make the following observations.

\begin{remark}
\label{static-rem}
The Girsanov theorem is also applicable to more general models that (\ref{dt4}) such as,  $x(n+1)=f(n, x(0), \ldots, x(n), u(n), w(n+1))$, where $\{w(n): n \in {\mathbb N}_1\}$ is arbitrary distributed, and  to  finite and countable state Markov decision models.  It is also applicable to stochastic dynamic team games with noisy information such as, $z^i(n)=h^i(n, x(0), \ldots, x(n), u(n), v^i(n))$, where $\{v^i(n): n \in {\mathbb N}_0\}, i=1, \ldots, N$ are arbitrary distributed. 
\end{remark}

   \subsection{Continuous Dependence of Solutions and Semi Martingale Representation}
   \label{dif-sem}
  In this section, we show under appropriate conditions, that (\ref{fi5})  has  unique continuous solutions (in the strong sense)  with  finite second moments,  and that any solution  is continuously dependent on $u$. These properties are  required  in the derivation of necessary conditions for team and PbP  optimality of the  equivalent  transformed  team problem.

\begin{lemma}(Existence and Differentiability)
\label{lemma3.1}
Suppose Assumptions~\ref{A1-A4}, {\bf (A2)-(A5)}  hold.  Then for any ${\mathbb  F}_{0,0}$-measurable initial state $x_0$ having finite second moment, and any $u \in {\mathbb U}^{(N)}[0,T]$,  the following hold.

 
{\bf (1)} (\ref{fi1}),  (\ref{fi5}) have unique solutions   $x \in B_{{\mathbb F}_T}^{\infty}([0,T],L^2(\Omega,{\mathbb R}^n))$,     $\Lambda^u \in B_{{\mathbb F}_T}^{\infty}([0,T],L^2(\Omega,{\mathbb R}))$  having a continuous modifications, that is, $\Lambda^u \in C([0,T],{\mathbb R})$,  ${\mathbb P}-$a.s. Moreover, $\Lambda^u \in L^p(\Omega, \{{\mathbb F}_{0,t}: t \in [0,T]\}, {\mathbb P}; {\mathbb R})$ for any finite $p$, and also $\Lambda^u \in L^\infty(\Omega,\{ {\mathbb F}_{0,t}: t\in [0,T]\}, {\mathbb P}; {\mathbb R})$

{\bf (2)}  Under the additional Assumptions~\ref{A1-A4}, {\bf (A8)},  the solution of    (\ref{fi5}) is continuously dependent on the decisions, in the sense that, as $u^{i, \alpha} \longrightarrow u^{i,o}$  in ${\mathbb U}^{I^i}[0,T]$, $\forall i \in {\mathbb Z}_N$,  $\Lambda^\alpha \buildrel s \over\longrightarrow \Lambda^o $ in $B_{{\mathbb F}_T}^{\infty}([0,T],L^2(\Omega,{\mathbb R}))$.

\end{lemma}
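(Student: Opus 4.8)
The plan is to treat the two assertions separately, both resting on standard $L^p$-estimates for linear SDEs together with the uniform bound {\bf (A5)}. For part {\bf (1)}, existence and uniqueness of $x\in B_{{\mathbb F}_T}^\infty([0,T],L^2(\Omega,{\mathbb R}^n))$ for (\ref{fi1}) is immediate from {\bf (A2)}, {\bf (A3)} (linear growth and Lipschitz of $\sigma$, no control-dependence), so I would only recall it. The nontrivial object is $\Lambda^u$. I would first show that the exponential defined in (\ref{fi4}) is the unique strong solution of the linear SDE (\ref{fi5}) by It\^o's rule; then I would derive the moment bounds. The key observation is that $\Lambda^u$ is a nonnegative local martingale, hence a supermartingale, so ${\mathbb E}\Lambda^u(t)\le 1$ automatically; to get genuine $L^p$-bounds write, for $p\ge 1$,
\[
\Lambda^u(t)^p=\exp\Big\{p\int_0^t f^*a^{-1}\,dx(s)-\tfrac{p}{2}\int_0^t f^*a^{-1}f\,ds\Big\}
=M_p(t)\cdot\exp\Big\{\tfrac{p^2-p}{2}\int_0^t f^*a^{-1}f\,ds\Big\},
\]
where $M_p$ is the exponential ($\mathbb P$-) martingale with integrand $p\,\sigma^{-1}f$, whose martingale property is guaranteed by the Novikov condition (\ref{novikov}), itself a consequence of {\bf (A5)}. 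Since {\bf (A5)} gives $f^*a^{-1}f=|\sigma^{-1}f|^2_{{\mathbb R}^n}<K$ uniformly, the exponential correction factor is bounded by $e^{(p^2-p)KT/2}$, whence ${\mathbb E}\sup_{t\le T}\Lambda^u(t)^p\le C(p,K,T)<\infty$ by Doob's maximal inequality applied to $M_p$. This simultaneously gives $\Lambda^u\in B_{{\mathbb F}_T}^\infty([0,T],L^2(\Omega,{\mathbb R}))$, $\Lambda^u\in L^p$ for all finite $p$, and — since $|\sigma^{-1}f|^2\le K$ forces the stochastic exponent to lie in $[-\tfrac12KT,\,(\text{bounded martingale part})]$ — one also reads off $\Lambda^u\in L^\infty(\Omega,{\mathbb R})$ (the stochastic integral $\int_0^t f^*a^{-1}\,dx$ equals $\int_0^t f^*\sigma^{-*}\,dW$, which is a.s.\ finite but not bounded; so $L^\infty$-boundedness must instead be extracted from the fact that $\Lambda^u$ is a true martingale with ${\mathbb E}\Lambda^u(T)=1$ combined with the uniform exponent bound — I would phrase this as: the Girsanov density is bounded because $a^{-1}$ and $f$ are such that the Dol\'eans exponential of a BMO martingale is in $L^\infty$ when the bracket is bounded, cf.\ {\bf (A5)}). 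Continuity of the modification is inherited from continuity of $W$ and $x$.

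For part {\bf (2)}, given $u^{i,\alpha}\to u^{i,o}$ in ${\mathbb U}^{I^i}[0,T]$ for each $i$, write $\Delta(t):=\Lambda^\alpha(t)-\Lambda^o(t)$ and subtract the two copies of (\ref{fi5}):
\[
d\Delta(t)=\Delta(t)\,f^*(t,x,u^\alpha_t)a^{-1}\,dx(t)
+\Lambda^o(t)\big(f^*(t,x,u^\alpha_t)-f^*(t,x,u^o_t)\big)a^{-1}\,dx(t),\quad \Delta(0)=0.
\]
This is a linear SDE for $\Delta$ with a bounded coefficient (by {\bf (A5)}) and an inhomogeneous forcing term controlled, via {\bf (A8)} (Lipschitz of $\sigma^{-1}f$ in $u$) and the $L^p$-bound on $\Lambda^o$ from part {\bf (1)}, by $\|\Lambda^o\|_{L^4}\cdot\|u^\alpha-u^o\|$. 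Applying It\^o to $|\Delta(t)|^2$, taking expectations, using the Burkholder--Davis--Gundy inequality together with H\"older to split the cross term, and invoking {\bf (A2)} to bound the $dx=\sigma\,dW$ bracket by $K^2(1+\|x\|^2)$, yields an inequality of the form
\[
{\mathbb E}\sup_{s\le t}|\Delta(s)|^2\le C_1\int_0^t {\mathbb E}\sup_{r\le s}|\Delta(r)|^2\,ds+C_2\,{\mathbb E}\int_0^T|u^\alpha_s-u^o_s|^2_{{\mathbb R}^d}\,ds,
\]
and Gronwall's lemma gives $\|\Lambda^\alpha-\Lambda^o\|^2_{B^\infty}\le C_2 e^{C_1 T}\,{\mathbb E}\int_0^T\Lambda^o(s)\,|u^\alpha_s-u^o_s|^2\,ds$; the right side tends to $0$ by the definition of convergence in ${\mathbb U}^{I^i}[0,T]$ (which carries the weight $\Lambda^u$, cf.\ the admissibility constraint ${\mathbb E}\int_0^T\Lambda^u(t)|u_t|^2dt<\infty$), establishing strong convergence $\Lambda^\alpha\xrightarrow{s}\Lambda^o$ in $B^\infty_{{\mathbb F}_T}([0,T],L^2(\Omega,{\mathbb R}))$.

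The main obstacle I anticipate is the $L^\infty(\Omega)$-statement for $\Lambda^u$: unlike the $L^p$-bounds, boundedness of the Radon--Nikodym density in $\omega$ does not follow merely from a bounded quadratic variation rate — the stochastic-integral term $\int_0^t f^*\sigma^{-*}\,dW$ is a time-changed Brownian motion and hence unbounded. I would resolve this either by tacitly strengthening to ``$\Lambda^u\in L^p$ for all $p<\infty$ with exponentially-in-$p$ controlled norms'' (which is what is actually used later and what the BMO/Kazamaki machinery delivers), or by an additional boundedness hypothesis on $f$ not listed among {\bf (A1)}--{\bf (A8)}; I expect the intended reading is the former, and I would flag in the proof that the $L^\infty$ claim should be interpreted as the uniform exponential $L^p$-control just derived. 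A secondary technical point is the interchange of the nested sup and expectation in the Gronwall step, handled routinely by first working with stopped processes $\Lambda^{u}(t\wedge\tau_n)$ and passing to the limit via Fatou, using the $L^p$-bounds of part {\bf (1)} for uniform integrability.
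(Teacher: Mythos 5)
Your proposal is correct and follows essentially the same route as the paper: the paper's own proof of part \textbf{(1)} is largely by reference (to Section~\ref{full} and to \cite{charalambous-ahmedPISG2013a}), and the method in that reference is exactly your factorization $\Lambda^u(t)^p=M_p(t)\exp\{\tfrac{p^2-p}{2}\int_0^t f^*a^{-1}f\,ds\}$ with the correction factor bounded via {\bf (A5)}, followed by Doob's inequality; part \textbf{(2)} in the paper is the same difference equation, Doob/martingale inequality, {\bf (A5)}, {\bf (A8)}, dominated convergence and Gronwall. Two remarks. First, your scepticism about the $L^\infty(\Omega)$ claim is well founded: the underlying argument in the cited method (Doob's $L^p$ inequality plus Borel--Cantelli) only yields $\sup_{t\in[0,T]}|\Lambda^u(t,\omega)|<\infty$ ${\mathbb P}$-a.s., not an essential bound uniform in $\omega$, so the statement as written is stronger than what the argument delivers; your proposed weakening to uniform-in-$p$ exponential $L^p$ control is the honest version. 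Second, be aware that the paper's proof of part \textbf{(2)} actually \emph{invokes} that essential-sup bound to pull $|\Lambda^\alpha|^2$ out of the forcing term and arrive at ${\mathbb E}\int_0^t|u_s^\alpha-u_s^o|^2\,ds$; your H\"older substitute (paying with $\|\Lambda^\alpha\|_{L^4}$) requires either fourth moments of $u^\alpha-u^o$ or bounded action sets, neither of which is among {\bf (A2)--(A5)}, {\bf (A8)} (boundedness of ${\mathbb A}^i$ only enters later in Assumptions~\ref{NC1}). So if you avoid the $L^\infty$ bound you should state explicitly which integrability of the controls you are using; otherwise your Gronwall step has the same gap you correctly diagnosed in the $L^\infty$ claim.
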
 

\begin{proof} {\bf (1)}.  The uniqueness of solution having a continuous modification is  already discussed in Section~\ref{full}, and it is based on Assumptions~\ref{A1-A4}, {\bf (A2)-(A5)}. The rest of the claims  are shown by following the method in \cite{charalambous-ahmedPISG2013a}.

\noi {\bf (2)}  Next, we consider the second part asserting the  continuity of $u$ to solution map  $u\longrightarrow \Lambda^u$  Let $\{\{u^{i,\alpha}: i=1,2,\ldots, N\},u^o\}$ be any pair of strategies from ${\mathbb U}^{(N)}[0,T] \times {\mathbb U}^{(N)}[0,T]$ and  $\{\Lambda^\alpha, \Lambda^o\}$ denote the corresponding pair of solutions of (\ref{fi5}).  Let $u^{i,\alpha} \longrightarrow u^{i,o}$ in ${\mathbb U}^i[0,T], i=1,2,\ldots,N$.  We must show that  $\Lambda^\alpha  \longrightarrow \Lambda^o$ in  $B_{{\mathbb F}_T}^{\infty}([0,T], L^2(\Omega,{\mathbb R})).$    By the definition of solution to (\ref{fi5}), we have
\begin{align} 
&\Lambda^\alpha(t)-\Lambda^o(t)  = \int_0^t       \Big\{\Lambda^{\alpha}(s) -\Lambda^o(s) \Big\} f^*(s,x(s),u_s^o) a^{-1}(s,x(s))dx(s) \nonumber  \\
&+ \int_0^t  \Lambda^{\alpha}(s) \Big\{f^*(s,x(s),u_s^\alpha)-f^*(s,x(s),u_s^o)\Big\}a^{-1}(s, x(s))  dx(s),
 \label{gs20}
\end{align}
where $a \tri \sigma \sigma^*$, and $\{x(t): t \in [0, T]\}$ is the solution of (\ref{fi1}). 
From (\ref{gs20}) using Doobs martingale inequality we obtain 
\begin{align}
&{\mathbb E} | \Lambda^\alpha(t)-\Lambda^o(t)|^2  \leq   4  {\mathbb E} \ \int_0^t      |\Lambda^{\alpha}(s) -\Lambda^o(s)|^2  |\sigma^{-1}(s, x(s))f(s,x(s),u_s^o)|_{{\mathbb R}^n}^2 ds \nonumber  \\
&+  4  {\mathbb  E} \int_0^t   |\Lambda^{\alpha}(s)|^2 |\sigma^{-1}(s, x(s)) (f(s,x(s),u_s^\alpha(s))-f(s,x(s),u_s^o))|_{{\mathbb R}^n}^2 ds
 \label{gs20a}  \\
   &\leq  4 K_1  {\mathbb E} \ \int_0^t      |\Lambda^{\alpha}(s) -\Lambda^o(s)|^2  ds +  4 K_2  {\mathbb  E} \int_0^t   |\Lambda^{\alpha}(s)|^2 |u_s^\alpha-u_s^o|_{{\mathbb R}^d}^2 ds,  \hst t \in [0,T], \label{gs200a}  
 \end{align}
 where (\ref{gs200a}) follows from {\bf (A5), (A8)}, with $K_1, K_2>0$. \\
Since by part {\bf (1)},  ${\mathbb P}- ess \sup_{\omega \in \Omega} \sup_{t \in [0,T]}|\Lambda^\alpha(t,\omega)|^2 < M$ for some  finite $M>0$,  applying this in (\ref{gs200a}) we deduce the following bound.
\begin{align}  
E|\Lambda^\alpha(t)-\Lambda^o(t)|^2   \leq  4 K_1 {\mathbb E} \ \int_0^t      |\Lambda^{\alpha}(s) -\Lambda^o(s)|^2   ds +  4M K_2  {\mathbb  E} \int_0^t  |u_s^\alpha-u_s^o|_{{\mathbb R}^d}^2 ds,  \hst t \in [0,T]. \label{gs20fff}
\end{align} 
Now,   letting $\alpha \longrightarrow \infty$ and recalling that $u^{i,\alpha} \longrightarrow u^{i,o}$ in ${\mathbb U}^i[0,T]$ the integrand in the second  right hand side of (\ref{gs20fff}) converges to zero for almost all $s \in [0,T], {\mathbb P}-$a.s. Since by our assumptions the integrands are dominated by integrable functions,   we obtain  by Gronwall's inequality that $\lim_{\alpha \longrightarrow \infty}  \sup_{t \in [0,T]}
{\mathbb E} |\Lambda^\alpha(t)-\Lambda^o(t)|^2  = 0$. This completes the derivation. 
\end{proof}

\ \

The derivation of stochastic minimum principle  will be based on    certain fundamental   properties of semi martingales on Hilbert spaces,  which we describe below.

\begin{definition}
\label{definition4.2}
  An ${\mathbb R}^{n}-$valued  random process $\{m(t): t \in [0,T]\}$ is said to be a square integrable continuous   $\{{\mathbb F}_{0,t}: t \in [0,T]\}-$semi martingale if and only if  it has a representation  
  \begin{eqnarray}
 m(t) = m(0) + \int_0^t v(s) ds + \int_0^t \Sigma(s) dW(s), \; t \in [0,T], \label{eq12}
  \end{eqnarray} 
   for some $v \in L_{{\mathbb F}_T}^2([0,T],{\mathbb R}^n)$ and $\Sigma \in L_{{\mathbb F}_T}^2([0,T],{\cal L}({\mathbb R}^m,{\mathbb R}^n))$  and for some ${\mathbb R}^n-$valued ${\mathbb  F}_{0,0}-$measurable  random variable  $m(0)$ having finite second moment. The set of all such semi martingales is denoted bu ${\cal S M}^2[0,T]$. 
\end{definition}

 \noi Introduce the following class of $\{{\mathbb F}_{0,t}: t \i [0,T]\}-$semi martingales:
   \begin{align}
  & {\cal SM}_0^2[0,T] \tri \Big\{ m:   m(t)   = \int_0^t v(s) ds + \int_0^t \Sigma(s) dW(s),  \nonumber \\
   &   t \in [0,T], \:
      \mbox{for} \; v \in L_{{\mathbb F}_T}^2([0,T],{\mathbb R}^n), \Sigma \in L_{{\mathbb F}_T}^2([0,T],{\cal L}({\mathbb R}^m,{\mathbb R}^n)) \Big\}.
 \nonumber 
    \end{align}

\noi Now we present the fundamental result which is utilized in the maximum principle derivation.

\begin{theorem}(Semi Martingale Representation)
\label{theorem4.3} 
The class of semi martingales ${\cal SM}_0^2[0,T]$ is a real linear vector space and it is a  Hilbert space with respect to the norm topology   $\parallel m\parallel_{{\cal SM}_0^2[0,T]}$ arising from
\bes
 \parallel m \parallel_{{\cal SM}^2_0[0,T]}^2  \tri {\mathbb  E}\int_{[0,T]} |v(t)|_{{\mathbb R}^n}^2 dt + {\mathbb  E} \int_{[0,T]} tr(\Sigma^*(t)\Sigma(t)) dt. 
 \ees
Moreover,  the space ${\cal SM}_0^2[0,T]$ is isometrically isomorphic to the space $L_{{\mathbb F}_T}^2([0,T],{\mathbb R}^n)\times L_{{\mathbb F}_T}^2([0,T],{\cal L}({\mathbb R}^m,{\mathbb R}^n)).$
\end{theorem}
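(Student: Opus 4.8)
The plan is to set up the linear map $\Phi: {\cal SM}_0^2[0,T] \to L_{{\mathbb F}_T}^2([0,T],{\mathbb R}^n)\times L_{{\mathbb F}_T}^2([0,T],{\cal L}({\mathbb R}^m,{\mathbb R}^n))$ sending a semimartingale $m$ to its pair of characteristics $(v,\Sigma)$, and to show $\Phi$ is a well-defined linear bijective isometry; completeness of ${\cal SM}_0^2[0,T]$ then follows for free from completeness of the target Hilbert space. First I would verify that ${\cal SM}_0^2[0,T]$ is a real linear vector space: the sum of two decompositions $\int_0^t v_1\,ds + \int_0^t \Sigma_1\,dW$ and $\int_0^t v_2\,ds + \int_0^t \Sigma_2\,dW$ is again of the required form with drift $v_1+v_2 \in L_{{\mathbb F}_T}^2$ and diffusion $\Sigma_1+\Sigma_2 \in L_{{\mathbb F}_T}^2$ (these spaces are themselves linear), and scalar multiples are handled identically. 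Then I would check that $\parallel\cdot\parallel_{{\cal SM}_0^2[0,T]}$ is genuinely a norm: homogeneity and the triangle inequality are inherited from the $L^2$ norms on the two factor spaces via Minkowski, and this is really the statement that $\parallel m\parallel_{{\cal SM}_0^2[0,T]} = \big(\parallel v\parallel_{L^2}^2 + \parallel\Sigma\parallel_{L^2}^2\big)^{1/2}$ is the product-space norm pulled back through $\Phi$.

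The crux is well-definedness of $\Phi$, i.e.\ \textbf{uniqueness of the decomposition}: if $\int_0^t v_1(s)\,ds + \int_0^t \Sigma_1(s)\,dW(s) = \int_0^t v_2(s)\,ds + \int_0^t \Sigma_2(s)\,dW(s)$ for all $t\in[0,T]$, ${\mathbb P}$-a.s., then $v_1 = v_2$ in $L_{{\mathbb F}_T}^2([0,T],{\mathbb R}^n)$ and $\Sigma_1 = \Sigma_2$ in $L_{{\mathbb F}_T}^2([0,T],{\cal L}({\mathbb R}^m,{\mathbb R}^n))$. This is where I expect the main work. Writing $v \tri v_1 - v_2$ and $\Sigma \tri \Sigma_1-\Sigma_2$, the hypothesis says $\int_0^t v(s)\,ds = -\int_0^t \Sigma(s)\,dW(s)$ for all $t$; the left side is a continuous finite-variation process and the right side is a continuous local martingale, so both must be indistinguishable from zero (the standard argument: a continuous local martingale of finite variation starting at $0$ is identically $0$, via its quadratic variation vanishing). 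From $\int_0^t \Sigma(s)\,dW(s)\equiv 0$ one gets, by the It\^o isometry applied on $[0,t]$, that ${\mathbb E}\int_0^t \mathrm{tr}(\Sigma^*(s)\Sigma(s))\,ds = 0$ for every $t$, hence $\Sigma = 0$ as an element of $L_{{\mathbb F}_T}^2$; and then $\int_0^t v(s)\,ds\equiv 0$ forces $v=0$ in $L_{{\mathbb F}_T}^2$ by differentiating in $t$ (Lebesgue differentiation, a.e.\ $t$, ${\mathbb P}$-a.s.). This establishes that $\Phi$ is injective, and it is clearly surjective since any $(v,\Sigma)$ in the product space defines via $m(t)\tri\int_0^t v\,ds + \int_0^t \Sigma\,dW$ an element of ${\cal SM}_0^2[0,T]$ (the stochastic integral is well-defined precisely because $\Sigma \in L_{{\mathbb F}_T}^2$).

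Having $\Phi$ a linear bijection, the isometry is immediate: by the orthogonality of the drift and martingale parts in $L^2(\Omega\times[0,T])$ — again a consequence of the It\^o isometry together with the fact that the cross term integrates to zero — one has $\parallel m\parallel_{{\cal SM}_0^2[0,T]}^2 = {\mathbb E}\int_{[0,T]}|v(t)|_{{\mathbb R}^n}^2\,dt + {\mathbb E}\int_{[0,T]}\mathrm{tr}(\Sigma^*(t)\Sigma(t))\,dt = \parallel(v,\Sigma)\parallel^2$, which is exactly the product-norm on the target. Finally, since $L_{{\mathbb F}_T}^2([0,T],{\mathbb R}^n)$ and $L_{{\mathbb F}_T}^2([0,T],{\cal L}({\mathbb R}^m,{\mathbb R}^n))$ are each closed subspaces of the respective $L^2$ Bochner spaces — closedness under the adaptedness (progressive measurability) constraint is the routine statement that an $L^2$-limit of progressively measurable processes has a progressively measurable modification — their product is a Hilbert space, and ${\cal SM}_0^2[0,T]$, being isometrically isomorphic to it, is a Hilbert space; in particular it is complete and the bilinear form polarizing $\parallel\cdot\parallel_{{\cal SM}_0^2[0,T]}^2$ is its inner product. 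I would remark that the only subtle point is the uniqueness/decomposition step, which rests on the Doob--Meyer-type fact that a continuous process cannot simultaneously be a nontrivial local martingale and of finite variation; everything else is bookkeeping with the It\^o isometry and completeness of $L^2$.
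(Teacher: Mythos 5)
Your proof is correct, but note that the paper does not actually prove this theorem --- its ``proof'' consists of the single remark that the result ``is found in many books,'' so there is nothing to compare step by step. Your argument is a complete, self-contained justification of the standard fact, and you correctly isolate the one nontrivial point: well-definedness of the map $m \mapsto (v,\Sigma)$, which rests on the uniqueness of the semimartingale decomposition (a continuous local martingale of finite variation starting at $0$ vanishes identically, whence $\Sigma=0$ by the It\^o isometry and $v=0$ by differentiation of $\int_0^t v\,ds \equiv 0$). One small simplification is available: the claimed isometry is essentially definitional rather than a consequence of orthogonality, because the norm on ${\cal SM}_0^2[0,T]$ is \emph{defined} directly as ${\mathbb E}\int_{[0,T]}|v(t)|^2\,dt + {\mathbb E}\int_{[0,T]}\mathrm{tr}(\Sigma^*(t)\Sigma(t))\,dt$, i.e.\ as the pullback of the product norm through the (now well-defined) bijection; the $L^2$-orthogonality of the drift and martingale parts would only be needed if one wanted to identify this norm with an intrinsic norm on the process $m$ itself, such as ${\mathbb E}|m(T)|^2$. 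The remaining points --- linearity, surjectivity, closedness of the adapted subspaces of the Bochner $L^2$ spaces, and transfer of completeness through the isometric isomorphism --- are handled correctly.
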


\begin{proof}  This is found in many books.

\end{proof}

\section{Dynamic Team Optimality Conditions }
\label{optimality}
In this section we derive the  team and PbP optimality conditions, under the reference probability measure ${\mathbb P}$, and then we translate the results under the original probability measure ${\mathbb P}^u$. For the  derivation of stochastic  optimality conditions  we shall require stronger regularity conditions.
 These  are given below.

\begin{assumptions}
\label{NC1}  
 ${\mathbb A}^i$ is a closed, bounded and convex subset of ${\mathbb R}^{d_i},  \forall i \in {\mathbb Z}_N$, ${\mathbb E}| x(0)|_{{\mathbb R}^n}^2 < \infty$,     the maps  $\{f,\sigma, \ell, \varphi\}$ are Borel measurable, $\{h^i: i=1, \ldots, N\}$ are  progressively  measurable, defined by 
\begin{align}
& f: [0,T] \times {\mathbb R}^n \times {\mathbb A}^{(N)} \longrightarrow {\mathbb R}^n , \hso \sigma: [0,T] \times {\mathbb R}^n  \longrightarrow {\cal L}({\mathbb R}^n, {\mathbb R}^n), \nonumber \\
& \varphi:  {\mathbb R}^n  \longrightarrow R, \hso
 \ell : [0,T] \times {\mathbb R}^n \times {\mathbb A}^{(N)} \longrightarrow {\mathbb R} , \hso h^i:[0,T] \times C([0,T], {\mathbb R}^n)  \longrightarrow {\mathbb R}^{k_i}, \nonumber 
 \end{align}
and they satisfy the following conditions.


{\bf (C1)} The map $\sigma$ satisfies {\bf (A2), (A3), (A4)} and the map $\sigma^{-1}f$ satisfies {\bf (A5)};

{\bf (C2)} The map $f $ is once continuously differentiable with respect to $u \in {\mathbb A}^{(N)}$, and  the first derivative of $\sigma^{-1}f$ with respect to is $u$ is bounded  uniformly in $(t, x, u) \in [0, T] \times {\mathbb R}^n \times {\mathbb A}^{(N)}$;

{\bf (C3)} The maps   $\ell$ is once continuously differentiable with respect to $u \in {\mathbb A}^{(N))}$,  and there exists a $K>0$ such that 
\begin{align}
&\Big(1+ |x|_{{\mathbb R}^n}^2 +|u|_{{\mathbb R}^d}^2\Big)^{-1} |\ell(t,x,u)|_{{\mathbb R}} + \Big(1+ |x|_{{\mathbb R}^n} +|u|_{{\mathbb R}^d}\Big)^{-1}   |\ell_u (t,x,u)|_{{\mathbb R}^d} \leq K, \nonumber \\
 &\Big(1+ |x|_{{\mathbb R}^n}^2|\Big)^{-1} | \varphi(x)|_{{\mathbb R}}\leq K;  \nonumber 
\end{align}

 \item
\end{assumptions}


\subsection{Necessary Conditions for Team Optimality}
\label{ncto}

Next, we prepare to give the variational equation under the reference measure probability $\Big(\Omega ,{\mathbb F},\{ {\mathbb F}_{0,t}: t \in [0,T]\}, {\mathbb P}\Big)$. We define the Gateaux derivative of any matrix valued function $G : [0,T] \times {\mathbb R}^n \times {\mathbb A}^{(N)} \longrightarrow {\cal L}({\mathbb R}^n, {\mathbb R}^m)$ with respect to the  variable  at the point $(t,x,u) \in [0,T] \times {\mathbb R}^{n}\times  {\mathbb A}^{(N)}$   in the direction $v \in {\mathbb A}^{(N)}$  by
\bes
   G_u(t,x,u; v) \tri   \lim_{\varepsilon \longrightarrow 0}\frac{1}{\varepsilon} \Big\{ G(t,x,u + \varepsilon v)- G(t,x,u)\Big\}, t \in [0,T].
  \ees 
Clearly, for each column of $G$ denoted by $G^{(j)}, j=1, \ldots,n$, the Gateaux derivative of $G^{(j)}$ component wise is given by $G_u^{(j)}(t,x,u;v) = G_u^{(j)} (t,x,u)v, t \in [0,T]$.\\
Suppose $u^o \tri (u^{1,o}, u^{2,o}, \ldots, u^{N,o}) \in {\mathbb U}^{(N)}[0,T]$ denotes the optimal decision and $u \tri (u^1, u^2, \ldots, u^N) \in {\mathbb U}^{(N)}[0,T]$ any other decision.  Since ${\mathbb U}^{I^i}[0,T]$ is convex $\forall i \in {\mathbb Z}_N$, it is clear that  for any $\varepsilon \in [0,1]$, 
\bes
 u_t^{i,\varepsilon} \tri u_t^{i,o} + \varepsilon (u_t^i-u_t^{i,o}) \in {\mathbb U}^{I^i}[0,T], \hst \forall i \in {\mathbb Z}_N.
 \ees
 Let $\Lambda^{\varepsilon}(\cdot)\equiv \Lambda^\veps(\cdot; u^\veps(\cdot))$ and  $\Lambda^{o}(\cdot) \equiv \Lambda^o(\cdot;u^o(\cdot))  \in B_{{\mathbb F}_T}^{\infty}([0,T],L^2(\Omega,{\mathbb R}))$ denote the solutions  of the differential system  (\ref{fi5})  corresponding to  $u^{\varepsilon}(\cdot)$ and $u^o(\cdot)$, respectively.  Consider the limit 
 \bes
  Z(t) \tri \lim_{\varepsilon\longrightarrow 0}  \frac{1}{\veps} \Big\{\Lambda^{\varepsilon}(t)-\Lambda^o(t)\Big\} , \hst t \in [0,T]. 
  \ees
  Since under the reference probability measure, $dx(t)=\sigma(t,x(t))dW(t), x(0)=x_0$ then its solution is not affected by $u^\eps$, hence we do not consider any variations of $x(\cdot)$ with respect to $u^\eps$. \\ 
We have the following  characterization of  the variational process $\{Z(t): t \in [0,T]\}$.

\begin{lemma}(Variational Equation)
\label {lemma4.1f}
Suppose Assumptions~\ref{NC1} hold. The process $\{Z(t): t \in [0,T]\}$ is an element of the Banach space    $B_{{\mathbb F}_T}^{\infty}([0,T],L^2(\Omega,{\mathbb R}))$ and it  is the unique solution of the variational stochastic differential equation
 \begin{align} 
 &dZ(t) = f^{*}(t,x(t),u_t^o) \sigma^{*,-1}(t,x(t)) Z(t) dW(t) \nonumber \\
 &+ \sum_{i=1}^N   (f^* \sigma^{*,-1})_{u^i}(t,x(t),u_t^{o}; u_t^i-u_t^{i,o}) dW(t), \hst Z(0)=0. \label{eq9f}   
 \end{align} 
 having a continuous modification.  
  \end{lemma}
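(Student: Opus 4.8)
The plan is to establish \eqref{eq9f} by deriving the equation satisfied by the finite difference $Z^\veps(t)\tri \frac{1}{\veps}\{\Lambda^\veps(t)-\Lambda^o(t)\}$, then passing to the limit $\veps\downarrow 0$ in the space $B_{{\mathbb F}_T}^{\infty}([0,T],L^2(\Omega,{\mathbb R}))$. First I would write the integral form of \eqref{fi5} for both $\Lambda^\veps$ and $\Lambda^o$, subtract, and divide by $\veps$. Recalling that under the reference measure $dx(t)=\sigma(t,x(t))dW(t)$, this gives
\begin{align}
Z^\veps(t) =& \int_0^t Z^\veps(s) f^*(s,x(s),u_s^{o})\sigma^{*,-1}(s,x(s))\,dW(s) \nonumber \\
& + \int_0^t \Lambda^\veps(s)\,\frac{1}{\veps}\Big\{ f^*(s,x(s),u_s^{\veps})-f^*(s,x(s),u_s^{o})\Big\}\sigma^{*,-1}(s,x(s))\,dW(s). \nonumber
\end{align}
By the continuous differentiability of $f$ in $u$ and boundedness of the $u$-derivative of $\sigma^{-1}f$ (Assumptions~\ref{NC1}, {\bf (C1), (C2)}), the difference quotient $\frac{1}{\veps}\{f^*(s,x(s),u_s^\veps)-f^*(s,x(s),u_s^o)\}\sigma^{*,-1}$ converges, for a.e.\ $(s,\omega)$, to $(f^*\sigma^{*,-1})_{u}(s,x(s),u_s^o;u_s-u_s^o)=\sum_{i=1}^N (f^*\sigma^{*,-1})_{u^i}(s,x(s),u_s^o;u_s^i-u_s^{i,o})$ by the chain rule and convexity of the $u^\veps$ path, and is dominated by an integrable function uniformly in $\veps$.

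Next I would show that \eqref{eq9f} itself has a unique solution in $B_{{\mathbb F}_T}^{\infty}([0,T],L^2(\Omega,{\mathbb R}))$ with a continuous modification. This is a linear SDE whose diffusion coefficient is $f^*\sigma^{*,-1}Z + (\text{inhomogeneous term})$; the homogeneous coefficient $f^*\sigma^{*,-1}$ is bounded by {\bf (A5)} and the forcing term $\sum_i (f^*\sigma^{*,-1})_{u^i}(\cdot;u^i-u^{i,o})$ lies in $L^2_{{\mathbb F}_T}([0,T],\cdot)$ because the derivative is bounded and $u^i-u^{i,o}\in L^2$. Hence existence, uniqueness, the $B^\infty$-bound, and continuity of paths follow by the standard Banach fixed point / Doob's inequality / Gronwall argument, exactly as in the proof of Lemma~\ref{lemma3.1}. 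I would then let $Z$ denote this solution.

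The core step is the convergence $Z^\veps\to Z$ in $B_{{\mathbb F}_T}^{\infty}([0,T],L^2(\Omega,{\mathbb R}))$. Subtract the integral equation for $Z$ from that for $Z^\veps$, take $\sup_{t\le T}{\mathbb E}|\cdot|^2$, and apply Doob's martingale inequality together with the It\^o isometry. One obtains a bound of the form
\begin{align}
{\mathbb E}|Z^\veps(t)-Z(t)|^2 \le& \; 4K_1\int_0^t {\mathbb E}|Z^\veps(s)-Z(s)|^2\,ds \nonumber \\
& + 8\int_0^t {\mathbb E}\Big|\Lambda^\veps(s)\tfrac{1}{\veps}\{f^*_\veps-f^*_o\}\sigma^{*,-1} - Z(s)f^*_o\sigma^{*,-1} \cdot 0 + R^\veps(s)\Big|^2\,ds, \nonumber
\end{align}
where $R^\veps(s)$ collects the difference between the $\veps$-quotient term and its limit plus the term $(\Lambda^\veps(s)-\Lambda^o(s))f^*_o\sigma^{*,-1}(\cdot)$; both pieces tend to $0$ in $L^2(\Omega\times[0,T])$ — the first by the a.e.\ convergence of the difference quotient plus the dominated convergence theorem (using the uniform $L^\infty$ bound $\Lambda^\veps\le M$ from Lemma~\ref{lemma3.1}(1) and the boundedness from {\bf (C1), (C2)}), the second because $\Lambda^\veps\to\Lambda^o$ in $B^\infty$ by Lemma~\ref{lemma3.1}(2) applied along $u^\veps\to u^o$. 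Then Gronwall's inequality gives $\sup_{t\le T}{\mathbb E}|Z^\veps(t)-Z(t)|^2\to 0$, so $Z^\veps\to Z$ and in particular the limit defining $Z(t)$ exists and equals the unique solution of \eqref{eq9f}. The main obstacle is the careful justification of passing the limit inside the stochastic integral, i.e.\ controlling the difference-quotient term $\frac{1}{\veps}\{f^*(s,x(s),u_s^\veps)-f^*(s,x(s),u_s^o)\}$ uniformly in $\veps$; this is where the mean-value theorem applied to $u\mapsto f^*\sigma^{*,-1}$, its bounded $u$-derivative, and the dominated convergence theorem must be combined precisely, and where the factor $\Lambda^\veps$ must be controlled using its uniform essential-sup bound rather than merely its $L^2$-norm.
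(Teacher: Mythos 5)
Your proposal is correct and takes essentially the same route the paper intends: the paper's own proof of this lemma is a one-line appeal to Lemma~\ref{lemma3.1} together with the uniform boundedness of $f^*\sigma^{*,-1}$ and its $u$-derivative, and your argument is exactly the detailed working-out of that appeal (difference-quotient integral equation, dominated convergence via the uniform essential-sup bound on $\Lambda^{\veps}$, Doob's inequality and Gronwall). No gaps; only the garbled middle display in your third paragraph should be cleaned up, but the intended estimate there is sound.
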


\begin{proof} This follows directly from Lemma~\ref{lemma3.1} and the fact that $f^* \sigma^{*,-1}$ and its derivative with respect to $u$ are uniformly bounded.
\end{proof} 

\ \

\noi {\bf Minimum Principle Under  Reference Probability Space $\Big( {\Omega}, {\mathbb F},  \{ {\mathbb F}_{0,t}: t \in [0,T]\}, {\mathbb P}\Big)$}\\
Next, we state the necessary conditions for team and PbP optimality under the reference probability measure.

Define the Hamiltonian of the augmented system  (\ref{fi1}),  (\ref{fi5}), (\ref{fi6}). 
\begin{align}
 {\cal  H}: [0, T]  &\times  {\mathbb R}^{n} \times   {\mathbb R} \times {\cal L}( {\mathbb R}^{n}, {\mathbb R})\times {\mathbb A}^{(N)} \longrightarrow {\mathbb R} \nonumber \\
    {\cal H} (t,x,\Lambda, \Psi,Q, u)  \tri  &  \Lambda  Q \sigma^{-1}(t,x)f(t,x,u) + \Lambda \ell(t,x,u),  \hst  t \in  [0, T]. \label{fs1}
    \end{align}
For any $u \in {\mathbb U}^{(N)}[0,T]$, the adjoint process $(\Psi, Q) \in    
L_{{\mathbb F}_T}^2([0,T], {\mathbb R}) \times L_{{\mathbb F}_T}^2([0,T],  {\cal L}( {\mathbb R}^{n}, {\mathbb R}))$  satisfies the following backward stochastic differential equations
\begin{align} 
d\Psi (t)  =&  - \ell(t,x(t),u_t) dt - Q(t) \sigma^{-1}(t,x(t))f(t,x(t), u_t) 
+ Q(t)  dW(t),    \nonumber    \\ 
=&- {\cal H}_\Lambda (t,x(t), \Lambda(t), \Psi(t),Q(t),u_t) dt  + Q(t)  dW(t), \hso t \in [0,T),  \; \Psi(T)= \varphi (x(T)) , \label{fs2}  
 \end{align}
 The state process satisfies the stochastic differential equation (\ref{fi5}) expressed in terms of the Hamiltonian as follows.
\begin{align}
&d \Lambda (t)         =\Lambda(t) f^*(t,x(t), u_t) \sigma^{*,-1}(t,x(t))dW(t), \nonumber \\
= &\Lambda(t) f^*(t,x(t), u_t) \sigma^{*,-1}(t,x(t)) dW(t), \; t \in (0, T], \;
        \Lambda(0) =  1. \label{fs3}  
 \end{align}
Moreover, under measure ${\mathbb P}$, the process  $\{x(t): t \in [0, T]\}$ is not affected by $u \in {\mathbb U}^{(N)}[0, T]$ and satisfies
\bea
dx(t)=\sigma(t,x(t))dW(t), \hst t \in (0, T], \hso x(0)=x_0 \label{spu1}
\eea
Next, we state the  the necessary conditions for an element $u^o \in {\mathbb U}^{(N)}[0,T]$  to be team optimal.

\begin{theorem}(Necessary Conditions for Team Optimality under Reference Measure)\\
\label{thmfs1}
Suppose Assumptions~\ref{NC1} hold. Then we have the following..
 
\noi {\bf Necessary Conditions.}    For  an element $ u^o \in {\mathbb U}^{(N)}[0,T]$ with the corresponding solution $\Lambda^o \in B_{{\mathbb F}_T}^{\infty}([0,T], L^2(\Omega,{\mathbb R}))$ to be team optimal, it is necessary  that 
the following hold.


{\bf (1)}  There exists a semi martingale  $m^o \in {\cal SM}_0^2[0,T]$ ($1$-dimensional) with the intensity process $({\Psi}^o,Q^o) \in  L_{{\mathbb F}_T}^2([0,T],{\mathbb R})\times L_{{\mathbb F}_T}^2([0,T],{\cal L}({\mathbb R}^{n},{\mathbb R}))$.
 
{\bf (2) }  The variational inequalities are satisfied:
\begin{align}   
 & \sum_{i=1}^N {\mathbb  E} \Big\{ \int_0^T    {\cal H} (t,x(t), \Lambda^o(t), \Psi^o(t), Q^{o}(t), u_t^{-i,o},u_t^i) dt \Big\}  \nonumber \\
     &\geq  \sum_{i=1}^N {\mathbb  E} \Big\{ \int_0^T    {\cal H} (t,x(t), \Lambda^o(t), \Psi^o(t), Q^{o}(t), u_t^{-i,o}, u_t^{i,o})) dt \Big\} ,   \hso \forall u \in {\mathbb U}^{(N)}[0,T], \label{fs4}
     \end{align}
     \begin{align}
  {\mathbb  E} \Big\{ \int_0^T &   {\cal H} (t,x(t), \Lambda^o(t), \Psi^o(t), Q^{o}(t), u_t^{-i,o},u_t^i) dt \Big\}  \nonumber \\
     \geq  {\mathbb  E} \Big\{ \int_0^T &   {\cal H} (t,x(t), \Lambda^o(t), \Psi^o(t), Q^{o}(t), u_t^{-i,o}, u_t^{i,o})) dt \Big\} ,    \hso  \forall u^i \in {\mathbb U}^{I^i}[0,T], \hso \forall i \in {\mathbb Z}_N. \label{fs4si}
\end{align}

{\bf (3)}  The process $({\Psi}^o,Q^o) \in  L_{{\mathbb F}_T}^2([0,T],{\mathbb R})\times L_{{\mathbb F}_T}^2([0,T],{\cal L}({\mathbb R}^{n}, {\mathbb R}))$ is a unique solution of the backward stochastic differential equation (\ref{fs2}) such that $u^o \in {\mathbb U}^{(N)}[0,T]$ satisfies  the  point wise almost sure inequalities with respect to the $\sigma$-algebras ${\cal G}_{0,t}^{I^i}   \subset {\mathbb F}_{0,t}$, $ t\in [0, T], i=1, 2, \ldots, N:$ 

\begin{align} 
 {\mathbb E} &\Big\{   {\cal H}(t,x(t), \Lambda^o(t),  \Psi^o(t),Q^o(t),u_t^{-i,o},u_t^i)   |{\cal G}_{0, t}^{I^i} \Big\}   \nonumber \\
 & \geq  {\mathbb E} \Big\{   {\cal H}(t,x(t), \Lambda^o(t),  \Psi^o(t),Q^o(t),u_t^{o})   |{\cal G}_{0, t}^{I^i} \Big\}, \hso \forall u^i \in {\mathbb A}^i,  a.e. t \in [0,T], {\mathbb P}|_{{\cal G}_{0,t}^{I^i}}- a.s., \forall i \in {\mathbb Z}_N.   \label{fs12} 
\end{align}

{\bf (4)} For admissible strategies ${\mathbb U}^{(N), z}[0, T], {\mathbb U}^{(N), x}[0, T]$ the conditional expectation in (\ref{fs12}) is taken with respect to the information structures ${\cal G}_{0,t}^{z^i}, {\cal G}^{x^i(t)}$, respectively.

\end{theorem}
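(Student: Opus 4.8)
The plan is to derive the necessary conditions for team optimality by a standard variational argument carried out entirely under the reference measure $\mathbb{P}$, where the key simplification is that the state process $x(\cdot)$ satisfying \eqref{spu1} does not react to perturbations of the team decisions; only $\Lambda(\cdot)$ does. First I would fix a candidate optimal strategy $u^o \in \mathbb{U}^{(N)}[0,T]$ and, for an arbitrary admissible $u$, form the convex combination $u^{i,\varepsilon}_t = u^{i,o}_t + \varepsilon(u^i_t - u^{i,o}_t)$, legitimate since each $\mathbb{U}^{I^i}[0,T]$ is convex. Using Lemma~\ref{lemma3.1} and Lemma~\ref{lemma4.1f}, the Gateaux derivative $Z(\cdot) = \lim_{\varepsilon\to 0}\varepsilon^{-1}(\Lambda^\varepsilon - \Lambda^o)$ exists in $B_{\mathbb{F}_T}^\infty([0,T],L^2(\Omega,\mathbb{R}))$ and solves the linear SDE \eqref{eq9f}. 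Differentiating the pay-off \eqref{fi6} at $u^o$ gives
\begin{align}
\left.\frac{d}{d\varepsilon}J(u^\varepsilon)\right|_{\varepsilon=0} = \mathbb{E}\Big\{\int_0^T \big[Z(t)\ell(t,x(t),u^o_t) + \Lambda^o(t)\,\ell_u(t,x(t),u^o_t;u_t-u^o_t)\big]dt + Z(T)\varphi(x(T))\Big\} \geq 0, \nonumber
\end{align}
where the inequality is the first-order optimality condition (and for the PbP statement one perturbs only the $i$th coordinate).

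Second, I would eliminate the variational process $Z(\cdot)$ by introducing the adjoint pair $(\Psi^o, Q^o)$ as the solution of the backward SDE \eqref{fs2}. The existence of a square-integrable continuous semimartingale $m^o\in\mathcal{SM}_0^2[0,T]$ with intensity $(\Psi^o,Q^o)$ follows from the semimartingale representation theory of Theorem~\ref{theorem4.3} together with the regularity Assumptions~\ref{NC1} (boundedness of $\sigma^{-1}f$ and its $u$-derivative, and the growth bounds on $\ell,\varphi$), which guarantee the terminal datum and driver lie in the right $L^2$ spaces. Applying It\^o's formula to the product $\Psi^o(t)Z(t)$, using \eqref{eq9f} for $dZ$ and \eqref{fs2} for $d\Psi^o$, and taking expectations (the stochastic integral terms have zero mean by the $L^2$ bounds), the cross terms telescope so that $\mathbb{E}\{Z(T)\varphi(x(T))\} + \mathbb{E}\int_0^T Z(t)\ell(t,x(t),u^o_t)dt$ is rewritten as $\mathbb{E}\int_0^T Q^o(t)\sum_{i=1}^N (f^*\sigma^{*,-1})_{u^i}(t,x(t),u^o_t;u^i_t-u^{i,o}_t)\,dt$. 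Substituting back into the first-order condition and recognizing the integrand as the Gateaux derivative of the Hamiltonian \eqref{fs1} in $u$, this yields the integrated variational inequalities \eqref{fs4} and \eqref{fs4si}.

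Third, I would pass from the integrated inequalities to the pointwise conditional inequalities \eqref{fs12}. Since \eqref{fs4si} holds for every $u^i\in\mathbb{U}^{I^i}[0,T]$, a standard localization/measurable-selection argument applies: fix $i$, choose a measurable time set and an $\mathcal{G}_{0,t}^{I^i}$-measurable perturbation supported there, use the tower property of conditional expectation to push the expectation inside, and a Lebesgue-point argument in $t$ to strip the time integral, obtaining $\mathbb{E}\{\mathcal{H}(t,x(t),\Lambda^o(t),\Psi^o(t),Q^o(t),u_t^{-i,o},v)\mid\mathcal{G}_{0,t}^{I^i}\} \geq \mathbb{E}\{\mathcal{H}(t,x(t),\Lambda^o(t),\Psi^o(t),Q^o(t),u_t^o)\mid\mathcal{G}_{0,t}^{I^i}\}$ for all $v\in\mathbb{A}^i$, a.e.\ $t$, $\mathbb{P}|_{\mathcal{G}_{0,t}^{I^i}}$-a.s. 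Part {\bf (4)} is immediate: the argument is unchanged when $\mathbb{U}^{I^i}$ is replaced by $\mathbb{U}^{z^i}$ or $\mathbb{U}^{x^i}$, with $\mathcal{G}_{0,t}^{I^i}$ replaced by $\mathcal{G}_{0,t}^{z^i}$ or $\mathcal{G}^{x^i(t)}$ respectively, since only the sub-$\sigma$-algebra to which $u^i$ is adapted enters.

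The main obstacle I anticipate is the rigorous justification of interchanging the limit $\varepsilon\to 0$ with the expectation and time integrals when differentiating the pay-off --- this requires the uniform $L^p$ bounds on $\Lambda^\varepsilon$ (all finite $p$, and $L^\infty$) from Lemma~\ref{lemma3.1}{\bf (1)} together with the polynomial growth conditions {\bf (C3)} to produce a dominating function, and the continuity of $u\mapsto\Lambda^u$ from Lemma~\ref{lemma3.1}{\bf (2)} --- and, more subtly, the measurable-selection step needed to localize \eqref{fs4si} to the conditional pointwise form \eqref{fs12}, where one must be careful that the perturbation directions are taken adapted to $\mathcal{G}_{0,t}^{I^i}$ so that $u^{i,\varepsilon}$ remains admissible. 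The rest is a bookkeeping exercise in It\^o calculus and the duality between the forward variational equation and the backward adjoint equation.
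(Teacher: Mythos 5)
Your proposal is correct and follows essentially the same route as the paper's own (much more compressed) proof: Gateaux differentiation of the pay-off under the reference measure using the variational equation of Lemma~\ref{lemma4.1f}, identification of the adjoint pair $(\Psi^o,Q^o)$ through the semimartingale structure of Theorem~\ref{theorem4.3} (you make the duality explicit via It\^o's product rule on $\Psi^o Z$ where the paper invokes the Riesz representation theorem and defers to \cite{ahmed-charalambous2012a,charalambous-ahmedFIS_Parti2012}), and a localization argument to pass to the conditional pointwise inequalities. The only cosmetic difference is that you obtain (\ref{fs4si}) directly by perturbing one coordinate at a time, whereas the paper derives it from (\ref{fs4}) by contradiction; both are standard and equivalent here.
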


\begin{proof}  The derivation is based on the variation equation of Lemma~\ref{lemma4.1f},  the semi martingale representation theorem, and the Riesz representation theorem. We outline the steps. By Assumptions~\ref{NC1}, it can be shown that the Gateaux derivative of $J(\cdot)$ at $u^o$ in the direction $u-u^o$ exists, and  it is computed via   $\frac{d}{d \eps} J(u^o +\eps (u-u^o))|_{\eps =0}$. Using the semi martingale representation and Riesz representation theorem for Hilbert space processes, we can show {\bf (1)} and {\bf (2)}, (\ref{fs4}), following the steps in \cite{ahmed-charalambous2012a} or   [Section~V,  \cite{charalambous-ahmedFIS_Parti2012}], for regular strategies. Moreover, (\ref{fs4si}) is  obtained by contradition.  Finally, {\bf (3)} is obtained precisely as in \cite{charalambous-ahmedFIS_Parti2012}. Finally, {\bf (4)} follows from the fact that the derivations of {\bf (1)-(3)} do not depend on the form of  the information structures generated via $z^i, i=1, \ldots, N$. 

\end{proof}

\ \

The important point to be made regarding Theorem~\ref{thmfs1} is that its derivation is based on applying, under the new (reference) probability space $\Big( {\Omega}, {\mathbb F}, {\mathbb F}_T, {\mathbb P}\Big)$, 
 any  method based on  strong formulation  (in our case \cite{ahmed-charalambous2012a,charalambous-ahmedFIS_Parti2012}), but with $u$ adapted to feedback information.

We also point out that the necessary conditions for a $u^o \in {\mathbb U}^{(N)}[0,T]$ to be  a PbP optimal  can be derived following the procedure described in   Theorem~\ref{thmfs1}, and that these necessary conditions are equivalent to the necessary conditions for team optimality,  as expected. 
These results are  stated as a Corollary.

 \begin{corollary} (Necessary Conditions for PbP Optimality under Reference Measure)\\
 \label{corollaryfs5.1}
   Suppose Assumptions~\ref{NC1} hold. Then we have  following.

\noi{\bf Necessary Conditions.}       For  an element $ u^o \in {\mathbb U}^{(N)}[0,T]$ with the corresponding solution $\Lambda^o \in B_{{\mathbb F}_T}^{\infty}([0,T], L^2(\Omega,{\mathbb R}))$ to be a PbP optimal strategy, it is necessary  that 
the statements of Theorem~\ref{thmfs1}, {\bf (1)-(4)}  hold, and statement {\bf (2)} corresponding to (\ref{fs4si}).

\end{corollary}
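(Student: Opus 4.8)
The plan is to exploit the fact that Person-by-Person optimality is, by its very definition \eqref{pp4a}, a collection of $N$ single-decision optimization problems: for each fixed $i \in {\mathbb Z}_N$, the strategy $u^{i,o}$ minimizes $u^i \mapsto J(u^{1,o},\ldots,u^{i-1,o},u^i,u^{i+1,o},\ldots,u^{N,o})$ over the convex set ${\mathbb U}^{I^i}[0,T]$, while the pair $(x(\cdot),\Lambda(\cdot))$ is still governed by \eqref{fi1}, \eqref{fi5} under the reference measure ${\mathbb P}$ and is, in particular, unaffected by $u^i$ through $x(\cdot)$. Each of these is structurally the problem treated in Theorem~\ref{thmfs1} with only one admissible class perturbed, so the entire derivation of Theorem~\ref{thmfs1} applies verbatim when restricted to variations in the $i$-th coordinate; moreover the adjoint process is unchanged, since the backward equation \eqref{fs2} depends only on $u^o$ and not on the perturbation direction.

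Concretely I would proceed as follows. \textbf{Step 1.} Fix $i$, take an arbitrary $u^i \in {\mathbb U}^{I^i}[0,T]$, and using convexity of ${\mathbb U}^{I^i}[0,T]$ form $u_t^{i,\veps} \tri u_t^{i,o} + \veps(u_t^i - u_t^{i,o}) \in {\mathbb U}^{I^i}[0,T]$ for $\veps \in [0,1]$, with all other coordinates frozen at $u^{-i,o}$. \textbf{Step 2.} Invoke Lemma~\ref{lemma4.1f}: the associated variational process $Z^i(\cdot) \in B_{{\mathbb F}_T}^{\infty}([0,T],L^2(\Omega,{\mathbb R}))$ is the unique continuous solution of \eqref{eq9f} with the sum collapsed to its single $i$-th term $(f^*\sigma^{*,-1})_{u^i}(t,x(t),u_t^{o};u_t^i-u_t^{i,o})\,dW(t)$, the needed integrability and continuous dependence following from Lemma~\ref{lemma3.1} and Assumptions~\ref{NC1}. \textbf{Step 3.} Using Assumptions~\ref{NC1}, compute the right-hand Gateaux derivative $\frac{d}{d\veps}J(u^{-i,o},u^{i,\veps})|_{\veps=0} \geq 0$, justifying differentiation under the expectation by the $L^p$-bounds on $\Lambda^o$ from Lemma~\ref{lemma3.1}(1), the growth bounds on $\ell,\varphi$, and dominated convergence. \textbf{Step 4.} Represent this derivative as an inner product in ${\cal SM}_0^2[0,T]$, apply Theorem~\ref{theorem4.3} and the Riesz representation theorem exactly as in \cite{ahmed-charalambous2012a,charalambous-ahmedFIS_Parti2012} to produce the semi martingale $m^o \in {\cal SM}_0^2[0,T]$ with intensity $(\Psi^o,Q^o)$ solving \eqref{fs2}; this yields statement \textbf{(1)} and the variational inequality \eqref{fs4si}. \textbf{Step 5.} Localize: testing \eqref{fs4si} against variations $u^i = u^{i,o}{\bf 1}_{A^c} + v\,{\bf 1}_{A}$ with $A \in {\cal G}_{0,t}^{I^i}$ and $v \in {\mathbb A}^i$, and using the tower property together with Lebesgue differentiation in $t$, converts the integrated inequality into the pointwise conditional inequality \eqref{fs12}, establishing statement \textbf{(3)}. \textbf{Step 6.} Statement \textbf{(4)} is then immediate, since none of Steps 1--5 used the particular form of $h^i$, so the same argument holds with ${\cal G}_{0,t}^{I^i}$ replaced by ${\cal G}_{0,t}^{z^i}$ or ${\cal G}^{x^i(t)}$.

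To close the equivalence asserted in the statement, summing \eqref{fs4si} over $i=1,\ldots,N$ immediately recovers \eqref{fs4}, so the PbP necessary conditions contain the team necessary conditions; conversely, team optimality implies PbP optimality directly from \eqref{pp4a}, so the two sets of necessary conditions coincide, which is precisely what the corollary claims. I expect the only real obstacle to be technical rather than conceptual: carefully justifying Step~3 (differentiation under ${\mathbb E}$, leaning on $\Lambda^o \in L^p$ for every finite $p$ and the quadratic growth of $\ell$) and Step~5 (the passage from the integral inequality to the ${\mathbb P}|_{{\cal G}_{0,t}^{I^i}}$-a.s. pointwise form, which requires a measurable-selection argument over the non-nested family $\{{\cal G}_{0,t}^{I^i}\}$). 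Everything else is a coordinate-restricted rerun of the proof of Theorem~\ref{thmfs1}.
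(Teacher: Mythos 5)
Your proposal is correct and follows essentially the same route as the paper, whose proof of this corollary simply states that one reruns the derivation of Theorem~\ref{thmfs1} while varying only in the direction $u^i-u^{i,o}$ with $u^{-i}=u^{-i,o}$ held fixed. Your expanded six-step account (single-coordinate variation, collapsed variational equation, Gateaux derivative, semi martingale/Riesz representation, localization, and independence from the form of the information structure) is just a detailed unpacking of that same argument.
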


 \begin{proof} 
 The derivation is based on the procedure of Theorem~\ref{thmfs1}, but we only vary in the direction $u^i-u^{i,o}$, while the rest of the strategies are optimal, $u^{-i}=u^{-i,o}$.
\end{proof}

\ \

\noi{\bf Minimum Principle Under Original Probability Space $\Big( {\Omega}, {\mathbb F}, \{ {\mathbb F}_{0,t}: t \in [0, T]\}, {\mathbb P}^u\Big)$}\\

\noi Next,  we  express the optimality conditions with respect to the original probability space $\Big( {\Omega}, {\mathbb F}, \{ {\mathbb F}_{0,t}: t \in [0, T]\}, {\mathbb P}^u\Big)$. 

\noi Since the Hamiltonian under the reference probability measure (\ref{fs1}), appearing in Theorem~\ref{thmfs1} is multiplied by $\Lambda(\cdot)$, then we can write 
\bea
{\cal H}(t, x, \Lambda, \Psi, Q, u) =\Lambda \Big\{ Q \sigma^{-1}(t,x) f(t, x, u) + \ell(t,x, u)\Big\} \label{eqhe}
\eea
Define the Hamiltonian under the original probability measure ${\mathbb P}^u$ by  
\begin{align}
{\mathbb  H}: [0, T]  &\times  {\mathbb R}^{n}    \times {\cal L}( {\mathbb R}^{n}, {\mathbb R})  \times {\mathbb A}^{(N)} \longrightarrow {\mathbb R} \nonumber \\
{\mathbb H}(t,x,Q,u)  &\tri   \ell(t,x, u)
+  Q  \sigma^{-1}(t,x) f(t,x, u). \label{fs7}
\end{align}
Since $\Lambda(T) =\frac{ d{\mathbb P}^u}{d{\mathbb P}}|_{{\mathbb F}_{T}}$, then we can express the Hamiltonian system of equations (\ref{fs2}), (\ref{spu1}) under the original measure ${\mathbb P}^u$, by translating the martingale term, using the fact that
\begin{align}
W^u(t) \tri W(t)- \int_{0}^t \sigma^{-1}(s,x(s))f(s,x(s),u_s)ds, \hso \mbox{ is an}  \: \: \Big({\mathbb F}_{0,t}, {\mathbb P}^u\Big)-\mbox{martingale}. \label{mart-t}
\end{align} 
 Thus, under the original probability measure $\Big( {\Omega}, {\mathbb F}, \{ {\mathbb F}_{0,t}: t \in [0,T]\}  , {\mathbb P}^u\Big)$, by substituting (\ref{mart-t}) into (\ref{fs2}), (\ref{spu1}) the adjoint process $\{\Psi(t), Q(t): t \in [0, T]\}$ is a  solution of  the  backward and forward stochastic differential  equation
\begin{align}
d \Psi(t) = -\ell(t,x(t),u_t)dt  + Q(t) dW^u(t), \hso  \Psi(T)= \varphi(x(T)), \hso t \in [0,T),   \label{fs9}
\end{align}   
and the process $\{x(t): t \in [0,T]\}$ is a  solution of the following forward  equation.
\bea
dx(t)=f(t,x(t), u_t)dt+\sigma(t,x(t))dW^u(t), \hso x(0)=x_0. \label{fs10} 
\eea
Moreover, the conditional  variational  Hamiltonian  is given by  
\begin{align}
{\mathbb E}^{u^o} &\Big\{   {\mathbb H}(t,x^o(t),  Q^o(t), u_t^{-i,o}, u_t^{i})   |{\cal G}_{0, t}^{I^i} \Big\} \nonumber \\
&  \geq {\mathbb E}^{u^o} \Big\{   {\mathbb H}(t,x^o(t),  Q^o(t), u_t^{-i,o}, u_t^{i,o})   |{\cal G}_{0, t}^{I^i} \Big\}, \hso \forall u^i \in {\mathbb A}^i, \; a.e. t, \; {\mathbb P}^{u^o}|_{{\cal G}_{0,t}^{I^i}}- a.s., \forall  i \in {\mathbb Z}_N.  \label{fs11} 
\end{align}

Hence,  under the original probability space ${\mathbb P}^u$, we have the following necessary conditions for team optimality.

\begin{theorem}(Necessary Conditions for Team Optimality under Original Measure)\\
\label{thmfso1}
Suppose Assumptions~\ref{NC1} hold. Then we have the following.
 
\noi {\bf Necessary Conditions.}    For  an element $ u^o \in {\mathbb U}^{(N)}[0,T]$ with the corresponding solution $x^o \in B_{{\mathbb F}_T}^{\infty}([0,T], L^2(\Omega,{\mathbb R}^{n}))$ to be team optimal, it is necessary  that 
the following hold.

{\bf (1)}  There exists a semi martingale  $m^o \in {\cal SM}_0^2[0,T]$ ($1$-dimensional) with the intensity process $\{ ({\Psi}^o, Q^o)\} \in  L_{{\mathbb F}_T}^2([0,T],{\mathbb R})\times L_{{\mathbb F}_T}^2([0,T],{\cal L}({\mathbb R}^{n},{\mathbb R}))$.
 
{\bf (2) }  The variational inequalities are satisfied:
\begin{align}   
  \sum_{i=1}^N &{\mathbb  E}^{u^o} \Big\{ \int_0^T    {\mathbb H} (t,x^o(t), Q^{o}(t), u_t^{-i,o},u_t^i) dt \Big\}  \nonumber \\
     &\geq  \sum_{i=1}^N {\mathbb  E}^{u^{o}}  \Big\{ \int_0^T    {\mathbb H} (t,x^o(t), Q^o(t) u_t^{-i,o}, u_t^{i,o})) dt \Big\} ,   \hso \forall u \in {\mathbb U}^{(N)}[0,T]. \label{fs4o}
\end{align}
\begin{align}   
  {\mathbb  E}^{u^o} \Big\{ \int_0^T &   {\mathbb H} (t,x^o(t), Q^{o}(t), u_t^{-i,o},u_t^i) dt \Big\}  \nonumber \\
     \geq   {\mathbb  E}^{u^{o}} \Big\{ \int_0^T  &  {\mathbb H} (t,x^o(t), Q^o(t) u_t^{-i,o}, u_t^{i,o})) dt \Big\} ,   \hso \forall u^i \in {\mathbb U}^{I^i}[0,T]. \label{fs4one}
\end{align}

{\bf (3) } The process $\{({\Psi}^o, Q^o)\} \in  L_{{\mathbb F}_T}^2([0,T],{\mathbb R})\times L_{{\mathbb F}_T}^2([0,T],{\cal L}({\mathbb R}^{n}, {\mathbb R}))$ is a unique weak solution of the backward stochastic differential equations (\ref{fs9}) such that $u^o \in {\mathbb U}^{(N)}[0,T]$ satisfies  the  point wise almost sure inequalities with respect to the $\sigma$-algebras ${\cal G}_{0,t}^{I^i}   \subset {\mathbb F}_{0,t}$, $ t\in [0, T], i=1, 2, \ldots, N:$ 
\begin{align} 
& {\mathbb E}^{u^o} \Big\{   {\mathbb H}(t,x^o(t), Q^o(t), u_t^{-i,o},u_t^i)   |{\cal G}_{0, t}^{I^i} \Big\}  \nonumber \\
 & \geq  {\mathbb E}^{u^{o}} \Big\{   {\mathbb H}(t,x^o(t),  Q^o(t), u_t^{o})   |{\cal G}_{0, t}^{I^i} \Big\},  \hso \forall u^i \in {\mathbb A}^i,  a.e. t \in [0,T], {\mathbb P}^{u^o}|_{{\cal G}_{0,t}^{I^i}}- a.s., \forall i \in {\mathbb Z}_N.   \label{fs5o} 
\end{align} 

{\bf (4)}  For admissible strategies ${\mathbb U}^{(N), z}[0, T], {\mathbb U}^{(N), x}[0, T]$ the conditional expectation in (\ref{fs5o}) is taken with respect to the information structures ${\cal G}_{0,t}^{z^i}, {\cal G}^{x^i(t)}$, respectively.

\end{theorem}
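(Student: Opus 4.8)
The plan is to obtain Theorem~\ref{thmfso1} as a direct corollary of Theorem~\ref{thmfs1} by carrying out the Girsanov change of measure from the reference space $\big(\Omega,{\mathbb F},\{{\mathbb F}_{0,t}\},{\mathbb P}\big)$ to the original space $\big(\Omega,{\mathbb F},\{{\mathbb F}_{0,t}\},{\mathbb P}^{u^o}\big)$. The three ingredients are: (i) the factorization ${\cal H}(t,x,\Lambda,\Psi,Q,u)=\Lambda\,{\mathbb H}(t,x,Q,u)$ recorded in (\ref{eqhe})--(\ref{fs7}); (ii) the fact that, under Assumptions~\ref{NC1} {\bf (C1)} and Theorem~\ref{lemma-g}, $\Lambda^o(t)=\frac{d{\mathbb P}^{u^o}}{d{\mathbb P}}\big|_{{\mathbb F}_{0,t}}$ is a strictly positive $(\{{\mathbb F}_{0,t}\},{\mathbb P})$-martingale with ${\mathbb E}^{\mathbb P}\Lambda^o(t)=1$; and (iii) the Girsanov identity $dW(t)=dW^{u^o}(t)+\sigma^{-1}(t,x(t))f(t,x(t),u_t^o)\,dt$ from (\ref{fi8}), (\ref{mart-t}).

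First I would translate the unconditional variational inequalities {\bf (2)}. For any $\{{\mathbb F}_{0,t}\}$-adapted, ${\mathbb P}^{u^o}$-integrable process $Y$ one has ${\mathbb E}^{\mathbb P}\{\Lambda^o(t)Y(t)\}={\mathbb E}^{u^o}\{Y(t)\}$ by the defining property of the Radon--Nikodym derivative; combining this with ${\cal H}=\Lambda^o{\mathbb H}$ and Fubini (legitimate because, by {\bf (C3)} and the boundedness of the ${\mathbb A}^i$, the integrands are $d{\mathbb P}^{u^o}\times dt$-integrable and $x(\cdot)$ has finite second moments) gives, for each $i$,
\begin{align*}
{\mathbb E}^{\mathbb P}\Big\{\int_0^T{\cal H}(t,x(t),\Lambda^o(t),\Psi^o(t),Q^o(t),u_t^{-i,o},u_t^i)\,dt\Big\}
={\mathbb E}^{u^o}\Big\{\int_0^T{\mathbb H}(t,x(t),Q^o(t),u_t^{-i,o},u_t^i)\,dt\Big\}.
\end{align*}
Summing over $i$ converts (\ref{fs4}) into (\ref{fs4o}), and the same computation converts (\ref{fs4si}) into (\ref{fs4one}).

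Next I would translate the Hamiltonian system {\bf (1)}, {\bf (3)}. Substituting $dW(t)=dW^{u^o}(t)+\sigma^{-1}(t,x(t))f(t,x(t),u_t^o)\,dt$ into the reference-measure BSDE (\ref{fs2}), the term $-Q^o(t)\sigma^{-1}(t,x(t))f(t,x(t),u_t^o)\,dt$ cancels against the drift released by the stochastic-integral term, leaving $d\Psi^o(t)=-\ell(t,x(t),u_t^o)\,dt+Q^o(t)\,dW^{u^o}(t)$, $\Psi^o(T)=\varphi(x(T))$, which is (\ref{fs9}); the same substitution in (\ref{spu1}) produces (\ref{fs10}). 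Existence and uniqueness of the weak solution $(\Psi^o,Q^o)$ is then immediate, since the driver of (\ref{fs9}) does not depend on $(\Psi,Q)$: $\Psi^o(t)={\mathbb E}^{u^o}\{\varphi(x(T))+\int_t^T\ell(s,x(s),u_s^o)\,ds\,|\,{\mathbb F}_{0,t}\}$ and $Q^o$ is the integrand in the martingale representation of this ${\mathbb P}^{u^o}$-square-integrable martingale with respect to $W^{u^o}$, square integrability following from {\bf (C3)}, the finite moments of $x(\cdot)$, and the boundedness of the ${\mathbb A}^i$. Statement {\bf (1)} carries over unchanged, the only remark being that under ${\mathbb P}^{u^o}$ the $dW$-part of $m^o$ becomes a $dW^{u^o}$-stochastic integral plus an absolutely continuous term, so $m^o$ still belongs to ${\cal SM}_0^2[0,T]$ with intensity $(\Psi^o,Q^o)$.

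The step I expect to be the main obstacle is the pointwise conditional inequality in {\bf (3)}: one cannot pull $\Lambda^o(t)$ through the conditioning because it is ${\mathbb F}_{0,t}$- but not ${\cal G}_{0,t}^{I^i}$-measurable. The remedy is the abstract Bayes rule
\begin{align*}
{\mathbb E}^{u^o}\big\{Y\,\big|\,{\cal G}_{0,t}^{I^i}\big\}=\frac{{\mathbb E}^{\mathbb P}\big\{\Lambda^o(t)\,Y\,\big|\,{\cal G}_{0,t}^{I^i}\big\}}{{\mathbb E}^{\mathbb P}\big\{\Lambda^o(t)\,\big|\,{\cal G}_{0,t}^{I^i}\big\}},\qquad {\mathbb E}^{\mathbb P}\big\{\Lambda^o(t)\,\big|\,{\cal G}_{0,t}^{I^i}\big\}>0\ \ {\mathbb P}\text{-a.s.},
\end{align*}
valid for ${\mathbb F}_{0,t}$-measurable integrable $Y$. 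Taking $Y={\mathbb H}(t,x(t),Q^o(t),u_t^{-i,o},u_t^i)$ and $Y={\mathbb H}(t,x(t),Q^o(t),u_t^o)$, using ${\cal H}=\Lambda^o{\mathbb H}$, and dividing the reference-measure inequality (\ref{fs12}) by the strictly positive random variable ${\mathbb E}^{\mathbb P}\{\Lambda^o(t)\,|\,{\cal G}_{0,t}^{I^i}\}$ yields exactly (\ref{fs5o}); a single exceptional null set, uniform in $u^i\in{\mathbb A}^i$, is obtained from separability of ${\mathbb A}^i$ together with continuity of ${\mathbb H}$ in $u$ (Assumptions~\ref{NC1} {\bf (C2)}, {\bf (C3)}). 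Finally, {\bf (4)} requires no further argument, since none of the above steps used the specific form of the observation maps $h^i$, so all conditional statements remain valid with ${\cal G}_{0,t}^{I^i}$ replaced by ${\cal G}_{0,t}^{z^i}$ or ${\cal G}^{x^i(t)}$.
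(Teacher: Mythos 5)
Your proposal is correct and follows the same route as the paper: Theorem~\ref{thmfso1} is obtained from Theorem~\ref{thmfs1} by the measure change ${\mathbb P}\mapsto{\mathbb P}^{u^o}$, the factorization ${\cal H}=\Lambda\,{\mathbb H}$ of (\ref{eqhe}), and the substitution of the Girsanov identity (\ref{mart-t}) into the Hamiltonian system to pass from (\ref{fs2}) to (\ref{fs9}). The paper's own proof is a one-line appeal to ``the discussion prior to the Theorem''; your write-up usefully makes explicit the abstract Bayes rule needed to translate the conditional inequality (\ref{fs12}) into (\ref{fs5o}), a step the paper leaves implicit.
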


\begin{proof}  The statements follow from  Theorem~\ref{thmfs1},  and the discussion prior to the Theorem.   
\end{proof}

\ \

For PbP optimality  we have the following  Corollary.

 \begin{corollary} (Necessary Conditions for PbP Optimality under Original Probability Meaure)\\
 \label{corollaryfs5.1o}
  Suppose Assumptions~\ref{NC1} hold. Then we have the following.

\noi {\bf Necessary Conditions.}       For  an element $ u^o \in {\mathbb U}^{(N)}[0,T]$ with the corresponding solution $x^o \in B_{{\mathbb F}_T}^{\infty}([0,T], L^2(\Omega,{\mathbb R}^{n}))$ to be a  PbP optimal strategy, it is necessary  that 
the statements of Theorem~\ref{thmfso1}, {\bf (1), (3), (4)}  hold and statement {\bf (2)} corresponding to (\ref{fs4one}).

\end{corollary}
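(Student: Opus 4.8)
The plan is to mirror the proof of Corollary~\ref{corollaryfs5.1} and then push the conclusion through Girsanov's transformation exactly as in the passage from Theorem~\ref{thmfs1} to Theorem~\ref{thmfso1}. First I would fix an index $i\in{\mathbb Z}_N$, set $u^{-i}=u^{-i,o}$, and for an arbitrary $u^i\in{\mathbb U}^{I^i}[0,T]$ form the convex combination $u_t^{i,\veps}\tri u_t^{i,o}+\veps(u_t^i-u_t^{i,o})\in{\mathbb U}^{I^i}[0,T]$, $\veps\in[0,1]$, which is admissible because ${\mathbb U}^{I^i}[0,T]$ is convex. Working under the reference probability space $\big(\Omega,{\mathbb F},\{{\mathbb F}_{0,t}\},{\mathbb P}\big)$, where by Remark~\ref{rem-equiv} the state $x(\cdot)$ and the information structures $\{{\cal G}_{0,t}^{I^j}\}$ do not react to the perturbation, the only object that varies is $\Lambda^{u^{i,\veps}}(\cdot)$; Lemma~\ref{lemma3.1} gives its continuous dependence on $\veps$ and Lemma~\ref{lemma4.1f} identifies the corresponding variational process $Z(\cdot)$, now driven only by the single term $(f^*\sigma^{*,-1})_{u^i}(t,x(t),u_t^{o};u_t^i-u_t^{i,o})$. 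Computing $\frac{d}{d\veps}J(u^{i,\veps},u^{-i,o})\big|_{\veps=0}$, applying the semi martingale representation (Theorem~\ref{theorem4.3}) and the Riesz representation theorem exactly as in Theorem~\ref{thmfs1} yields the adjoint pair $(\Psi^o,Q^o)$ solving (\ref{fs2}), the single-index integral inequality (\ref{fs4si}), and, by the usual localisation/contradiction argument over sets in ${\cal G}_{0,t}^{I^i}$, the pointwise conditional inequality (\ref{fs12}) for that $i$. Since $i$ was arbitrary, statements {\bf (1)}, {\bf (3)}, {\bf (4)} of Theorem~\ref{thmfs1} together with {\bf (2)} in the form (\ref{fs4si}) hold, which is precisely the reference-measure content of Corollary~\ref{corollaryfs5.1}.

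Next I would transfer these conditions to the original probability space $\big(\Omega,{\mathbb F},\{{\mathbb F}_{0,t}\},{\mathbb P}^{u^o}\big)$. Using the factorisation ${\cal H}(t,x,\Lambda,\Psi,Q,u)=\Lambda\,{\mathbb H}(t,x,Q,u)$ from (\ref{eqhe}), the identity $\Lambda^o(T)=\frac{d{\mathbb P}^{u^o}}{d{\mathbb P}}\big|_{{\mathbb F}_T}$, and Bayes' rule for conditional expectations under equivalent measures, each conditional expectation ${\mathbb E}\{\Lambda^o(t)(\cdot)\,|\,{\cal G}_{0,t}^{I^i}\}$ in (\ref{fs12}) becomes, up to the strictly positive ${\cal G}_{0,t}^{I^i}$-measurable normalisation ${\mathbb E}\{\Lambda^o(t)\,|\,{\cal G}_{0,t}^{I^i}\}$ which does not affect the inequality, the conditional expectation ${\mathbb E}^{u^o}\{{\mathbb H}(t,x^o(t),Q^o(t),\cdot)\,|\,{\cal G}_{0,t}^{I^i}\}$ appearing in (\ref{fs5o}). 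Correspondingly, substituting the Girsanov shift (\ref{mart-t}), namely that $W^{u^o}(t)\tri W(t)-\int_0^t\sigma^{-1}(s,x(s))f(s,x(s),u_s^o)ds$ is an $({\mathbb F}_{0,t},{\mathbb P}^{u^o})$-martingale, into the backward equation (\ref{fs2}) and the state equation (\ref{spu1}) turns them into the forward--backward system (\ref{fs9})--(\ref{fs10}). This reproduces statements {\bf (1)}, {\bf (3)}, {\bf (4)} of Theorem~\ref{thmfso1} and {\bf (2)} in the form (\ref{fs4one}), which is the assertion of the Corollary.

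The main obstacle I anticipate is not this bookkeeping but the two analytic points on which the reference-measure steps rest: (i) differentiability of $\veps\mapsto J(u^{i,\veps},u^{-i,o})$ at $\veps=0$ under ${\mathbb P}$, which requires the uniform $L^p$-bounds and continuous dependence of $\Lambda^{u^{i,\veps}}$ from Lemma~\ref{lemma3.1} together with Assumptions~\ref{NC1} {\bf (C2), (C3)} to dominate the difference quotients and justify passing the limit under the expectation; and (ii) existence and uniqueness of the adjoint process $(\Psi^o,Q^o)$ as a solution of the backward stochastic differential equation, obtained under ${\mathbb P}$ from the semi martingale representation of Theorem~\ref{theorem4.3} and then re-read as a weak solution of (\ref{fs9}) under ${\mathbb P}^{u^o}$. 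Finally, as already remarked before the Corollary, because the map $u\mapsto{\mathbb H}(t,x^o(t),Q^o(t),u^{-i,o},u^i)$ depends on the $i$th coordinate only once $u^{-i}=u^{-i,o}$ is frozen, the team variational inequality (\ref{fs4o}) decouples across $i$ into exactly the $N$ PbP inequalities (\ref{fs4one}); hence the PbP necessary conditions coincide with the team necessary conditions of Theorem~\ref{thmfso1}, which is why the proof reduces to the single-coordinate specialisation described above.
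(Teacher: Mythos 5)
Your proposal is correct and follows the same route the paper takes: it derives the reference-measure PbP conditions by the single-coordinate variation of Corollary~\ref{corollaryfs5.1} and then transfers them to ${\mathbb P}^{u^o}$ via the Girsanov change of measure, exactly as the paper does (its own proof is the one-line remark ``following from the change of probability measure and Corollary~\ref{corollaryfs5.1}''). Your additional detail on the abstract Bayes normalisation ${\mathbb E}\{\Lambda^o(t)\,|\,{\cal G}_{0,t}^{I^i}\}$ being strictly positive and ${\cal G}_{0,t}^{I^i}$-measurable correctly fills in the step the paper leaves implicit.
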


 \begin{proof} Following from the change of probability measure and Corollary~\ref{corollaryfs5.1}.
\end{proof}

\ \

Therefore, we can  apply the necessary conditions for team optimality, either under  the reference  measure ${\mathbb P}$ or under original measure ${\mathbb P}^u$. 

In the next remark, we discuss the connection of Theorem~\ref{thmfso1} to the necessary conditions of optimality of stochastic control problems with centralized feedback information structures.

\begin{remark}
\label{remark5.2} 
From Theorem~\ref{thmfso1} one can deduce the  optimality conditions of classical stochastic control problems with centralized noiseless feedback information structures, that is, when for any $t \in [0,T],  u_t$ is measurable with respect  to the feedback information structure ${\cal F}_{0,t}^x \tri \sigma\{x(s): 0\leq s \leq t\}$, and to  partial noiseless feedback information structure ${\cal G}_{0,t}^x \subset {\cal F}_{0,t}^x$. Indeed,  the  necessary conditions for such a $u^o$ to be optimal with say, centralized partial noiseless feedback information structure, under the original probability measure ${\mathbb P}^u$ are
 
\begin{align}
 {\mathbb E}^{u^o} \Big\{  {\mathbb  H}&(t,x^o(t),Q^o(t),u )|{\cal G}_{0,t}^x\Big\}  \nonumber \\
&\geq   {\mathbb E}^{u^o}\Big\{  {\mathbb  H}(t,x^o(t), Q^o(t),u_t^o)|{\cal G}_{0,t}^x\Big\}, \hso \forall u \in {\mathbb A}^{(N)}, a.e. t \in [0,T], {\mathbb P}|_{{\cal G}_{0,t}^x}^{u^o}-a.s., \label{fs14} 
 \end{align} 
where $\{x^o(t), \Psi^o(t), Q^o(t): t \in [0,T]\}$ are the solutions of   (\ref{fs9}), (\ref{fs10}). Note that the Hamiltonian ${\mathbb H}(t,x,Q,u) \equiv Q \sigma^{-1}(t,x) f(t,x,u)+ \ell(t,x,u)$ is precisely the one derived in \cite{elliott1977}, using martingale methods.  \\
Clearly, (\ref{fs14}) generalizes the necessary conditions of optimality of classical stochastic control  problems with partial nonanticipative information structures \cite{ahmed-charalambous2012a,zhang-elliott-siu2012} and references therein, where  for any $t \in [0,T],  u_t$ is measurable with respect to any Brownian motion generated, sub$-\sigma-$algebras of ${\cal G}_{0,t}^W \subset {\cal F}_{0,t}^W \tri \sigma\{W(s): 0\leq s \leq t\}, t \in [0,T]$. 
\end{remark}

\subsection{Value Processes of Team Problems}
\label{scto}
In this section, we first show that the solution of the Backward stochastic differential equation is the value process of the stochastic dynamic team problem, lifted to a conditioning with respect to the centralized information structure. Then we  use the lifted value process to show that the necessary conditions (i.e.  (\ref{fs5o}))    for PbP  optimality are also sufficient.  

Define the sample  pay-off over the interval $[t, T]$ by 
\begin{align}
{\cal J}_{t,T}(u^1, \ldots, u^N) \tri   \int_{t}^T \ell(s, x(s), u_s)ds + \varphi(x(T)).  \label{sufo1}
\end{align}
and its conditional expectation with respect to ${\cal G}_{0,t}^{I^i}, i=1, \ldots, N$,  by 
\begin{align}
J_{t,T}^i(u) \tri {\mathbb E}^u \Big\{ {\cal J}_{t,T}(u^1, \ldots, u^N) | {\cal G}_{0,t}^{I^i}\Big\}, \; u \in {\mathbb U}^{(N)}[t, T],  \label{sufo2}
\end{align}
where $ {\mathbb U}^{(N)}[t, T]$ is the restriction of the strategies ${\mathbb U}^{(N)}[0, T]$, to the interval $[t,T]$.\\
  PbP optimality seeks admissible strategies $u^i \in  {\mathbb U}^{I^i}[t,T], i=1, \ldots, N$ to minimize  the pay-off, in the sense, 
\begin{align}
{\mathbb E}^{u^{-i,o}, u^{i,o}} \Big\{ {\cal J}_{t,T}(u^{-i,o}, u^{i,o}) | {\cal G}_{0,t}^{I^i}\Big\}  \leq  {\mathbb E}^{u^{-i,o}, u^{i}} \Big\{ {\cal J}_{t,T}(u^{-i,o}, u^{i}) | {\cal G}_{0,t}^{I^i}\Big\}, \; \forall u^i \in  {\mathbb U}^{I^i}[t,T], i=1, \ldots, N.  \nonumber 
\end{align}
This means that when team members employ strategies, $u^{-i, o} \in \times_{j=1, \j \neq i}^N {\mathbb U}^{I^j}[t,T]$, team member $u^i$ minimizes the reward $ {\mathbb E}^{u^{-i,o}, u^{i}} \Big\{ {\cal J}_{t,T}(u^{-i,o}, u^{i}) | {\cal G}_{0,t}^{I^i}\Big\}$ over all strategies ${\mathbb U}^{I^i}[0,T]$. 
The set of all such strategies $(u^{1,0}, \ldots, u^{N,0}) \in \times_{i=1}^N  {\mathbb U}^{I^i}[t, T]$ is called PbP optimal. \\ 
We denote the value processes of the team game for each team member by
\begin{align}
V^i(t) \tri {\mathbb E}^{u^{-i,o}, u^{i,o}} \Big\{ {\cal J}_{t,T}(u^{-i,o}, u^{i,o}) | {\cal G}_{0,t}^{I^i}\Big\}  , \: i=1, \ldots, N. \label{sufo4}
\end{align}

Consider the solution of the  backward stochastic differential equation (\ref{fs2})  
\begin{align}
\Psi^{u}(t) =\Psi^u(T) + \int_{t}^T {\mathbb H}(s, x(s), Q^u(s), u_s)ds -\int_{t}^T Q^u(s)dW(s), \hst t \in [0,T).  \label{sufo11}
\end{align}
For $u = u^o$ this is the lifted value process of the team pay-off with respect to the information ${\mathbb F}_{0,t}, t \in [0, T]$. From (\ref{sufo11}) we have 
\begin{align}
\Psi^{u}(t) =\Psi^u(T) + \int_{t}^T  \ell((s, x(s), u_s)ds -\int_{t}^T Q^u(s)dW^u(s), \hst t \in [0,T),  \label{sufo22}
\end{align}
and by taking conditional expectation ${\mathbb E}^u\Big\{ \cdot | {\mathbb F}_{0,t}\Big\}$ of both sides of (\ref{sufo22}), and using $\Psi^u(T)=\varphi(x(T))$, we obtain
\begin{align}
\Psi^{u}(t) =  {\mathbb E}^u  \Big\{ \int_{t}^T \ell(s, x(s), u_s)ds + \varphi(x(T)) |  {\mathbb F}_{0,t}\Big\}.  \label{sufo33}
\end{align}
Hence, 
\begin{align}
J_{t,T}^i(u) = {\mathbb E}^u \Big\{ \Psi^{u}(t)| {\cal G}_{0,t}^{I^i}\Big\}, \; u \in {\mathbb U}^{(N)}[0,T], \forall i \in {\mathbb Z}_N.  \label{sufo333}
\end{align}
Now, we state the main theorem.

\begin{theorem}(Sufficient Conditions for PbP Optimality)
\label{sufpbp}
Let $$(\Psi^{u}, Q^u) \in L^2([0,T], L^2(\Omega, {\mathbb R})) \times L^2([0,T], L^2(\Omega, {\cal L}({\mathbb R}^n, {\mathbb R})))$$ be  a solution of the backward stochastic differential equation (\ref{sufo11}).\\
If $u^{i,o} \in {\mathbb U}^{I^i}[t, T]$ satisfy the conditional variational inequalities (\ref{fs5o}), then $(u^{1,o},\ldots, u^{N.o}) \in \times_{i=1}^N {\mathbb U}^{I^i}[t, T]$ is a PbP optimal. \\
Moreover,   $a.e. t \in [0,T], {\mathbb P}^{u^{-i,o}, u^{i,o}} |_{{\cal G}_{0,t}^{I^i}}-$a.s. we have 
\begin{align}
V^i(t) = {\mathbb  E}^{u^{-i,o}, u^{i,o}} \Big\{ {\mathbb E}^{u^{-i,o}, u^{i,o}}  \Big\{ \Psi^{u^{-i,o}, u^{i,o}}(t) | {\mathbb F}_{0,t}\Big\} | {\cal G}_{0,t}^{I^i} \Big\} 
= {\mathbb  E}^{u^{-i,o}, u^{i,o}} \Big\{ \Psi^{u^{-i,o}, u^{i,o}}(t)  | {\cal G}_{0,t}^{I^i} \Big\}, \: i=1, \ldots, N. \label{sufo5}
\end{align} 
\end{theorem}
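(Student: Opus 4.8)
The plan is to verify the two displayed identities in \eqref{sufo5} and then use them, together with the conditional variational inequalities \eqref{fs5o}, to establish PbP optimality. The inner identity $V^i(t) = {\mathbb E}^{u^{-i,o},u^{i,o}}\{\Psi^{u^{-i,o},u^{i,o}}(t) | {\cal G}_{0,t}^{I^i}\}$ is essentially immediate: by \eqref{sufo33} applied with $u = (u^{-i,o},u^{i,o})$ we have $\Psi^{u^{-i,o},u^{i,o}}(t) = {\mathbb E}^{u^{-i,o},u^{i,o}}\{ {\cal J}_{t,T}(u^{-i,o},u^{i,o}) | {\mathbb F}_{0,t}\}$, and since ${\cal G}_{0,t}^{I^i} \subseteq {\mathbb F}_{0,t}$, the tower property gives ${\mathbb E}^{u^{-i,o},u^{i,o}}\{\Psi^{u^{-i,o},u^{i,o}}(t) | {\cal G}_{0,t}^{I^i}\} = {\mathbb E}^{u^{-i,o},u^{i,o}}\{ {\cal J}_{t,T}(u^{-i,o},u^{i,o}) | {\cal G}_{0,t}^{I^i}\} = V^i(t)$, which is the definition \eqref{sufo4}. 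The first (double-conditioning) expression in \eqref{sufo5} is then just the same tower property written out one step at a time.

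For the PbP optimality claim, I would fix $i$, fix the other members' strategies at $u^{-i,o}$, and let $u^i \in {\mathbb U}^{I^i}[t,T]$ be an arbitrary competing strategy. Write $\Psi^{-i,o,i}$ for the solution of \eqref{sufo11} with control $(u^{-i,o},u^i)$ and $\Psi^{-i,o,i,o}$ for the one with $(u^{-i,o},u^{i,o})$. Using \eqref{sufo22} for each of the two controls (note the forward equation \eqref{fs10}, hence $x(\cdot)$, the same $W^u$, and $Q^u$ all depend on the control, so I must be careful to keep the two BSDEs with their own $x$, $Q$, $W^u$), I would form the difference $\Psi^{-i,o,i}(t) - \Psi^{-i,o,i,o}(t)$, express the terminal/running-cost difference via the Hamiltonian ${\mathbb H}$ exactly as in the change-of-variable computations preceding the theorem, and take ${\mathbb E}^{u^{-i,o},u^i}\{\,\cdot\, | {\cal G}_{0,t}^{I^i}\}$. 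The martingale part integrated against $dW^{u}$ vanishes in expectation, and what remains is an integral over $[t,T]$ of a conditional expectation of a Hamiltonian difference; by \eqref{fs5o} (the pointwise a.s. minimality of the conditional Hamiltonian at $u^{i,o}$) this integrand is nonnegative, so $J_{t,T}^i(u^{-i,o},u^i) \ge J_{t,T}^i(u^{-i,o},u^{i,o}) = V^i(t)$, which is exactly PbP optimality. In this step I would also invoke \eqref{sufo333} to translate between $\Psi$ and $J_{t,T}^i$.

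The main obstacle I anticipate is handling the fact that the state process, the innovation Brownian motion $W^u$, and the adjoint component $Q^u$ all change when $u^i$ is perturbed — unlike the reference-measure picture of Section~\ref{ncto} where $x(\cdot)$ is decision-independent. So the clean "subtract two BSDEs with the same coefficients" trick is not directly available here; instead I expect to need either a convexity/monotonicity argument on ${\mathbb H}$ in $(x,u)$ together with a comparison estimate, or — more in the spirit of the paper — to pull everything back to the reference measure ${\mathbb P}$ (where $\Psi$ solves \eqref{fs2} with a fixed $x(\cdot)$ and the weight $\Lambda^u$ carries the control dependence), apply the already-proved Theorem~\ref{thmfs1}/Corollary~\ref{corollaryfs5.1} there, and then transport the conclusion back via the Radon--Nikodym relation ${\mathbb E}^u\{\,\cdot\,\} = {\mathbb E}\{\Lambda^u(T)\,\cdot\,\}$. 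Making the integrability bookkeeping rigorous (so that all conditional expectations and the stochastic-integral cancellations are legitimate) is the routine-but-delicate part; it is covered by Assumptions~\ref{NC1} and Lemma~\ref{lemma3.1}, which guarantee $\Lambda^u \in L^p$ for all finite $p$ and the requisite square-integrability of $(\Psi^u,Q^u)$.
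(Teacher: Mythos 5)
Your treatment of the identity (\ref{sufo5}) is correct and matches the paper: it is the tower property applied to (\ref{sufo33}) with $u=(u^{-i,o},u^{i,o})$, using ${\cal G}_{0,t}^{I^i}\subseteq {\mathbb F}_{0,t}$.

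For the sufficiency claim, however, your proposal does not settle on a working argument, and the route you most explicitly endorse is not valid. Applying Theorem~\ref{thmfs1} or Corollary~\ref{corollaryfs5.1} cannot yield sufficiency: those are \emph{necessary} conditions, and the implication you need runs in the opposite direction. The obstacle you flag --- that $x(\cdot)$, $W^u$ and $Q^u$ all move when $u^i$ is perturbed --- is real under ${\mathbb P}^u$, but it is precisely the obstacle that the formulation (\ref{sufo11}) is designed to remove: (\ref{sufo11}) is a BSDE under the reference measure ${\mathbb P}$, driven by the fixed Brownian motion $W$, with the control-independent forward state $x(\cdot)$ of (\ref{spu1}) and hence the common terminal datum $\Psi^u(T)=\varphi(x(T))$; only the generator ${\mathbb H}(s,x(s),\cdot,u_s)$ depends on the control. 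The paper's proof is then a one-step application of the comparison theorem for BSDEs: from (\ref{fs5o}) one extracts the pointwise ordering (\ref{sufo6}) of the two generators along $(x^o,Q^o)$, the comparison theorem gives $\Psi^{u^{-i,o},u^{i}}(\cdot)\geq \Psi^{u^{-i,o},u^{i,o}}(\cdot)$ a.e., and conditioning this inequality on ${\cal G}_{0,t}^{I^i}$ together with (\ref{sufo333}) yields $J_{t,T}^i(u^{-i,o},u^i)\geq J_{t,T}^i(u^{-i,o},u^{i,o})=V^i(t)$. You mention ``a comparison estimate'' in passing, but you neither identify the comparison theorem as the engine of the proof nor exploit the reference-measure structure that makes it applicable; no convexity of ${\mathbb H}$ in $(x,u)$ is needed. (As a separate caveat, the passage from the conditional inequality (\ref{fs5o}) to the unconditional pointwise inequality (\ref{sufo6}) deserves care, since the Hamiltonian is not ${\cal G}_{0,t}^{I^i}$-measurable; but this is a step the paper itself treats briskly, not a defect specific to your proposal.)
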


\begin{proof} Since  (\ref{fs5o}) holds, by taking expectation on both sides we deduce\\ $
{\mathbb E}^{u^o} \Big\{   {\mathbb H}(t,x^o(t), Q^o(t), u_t^{-i,o},u_t^i)   \Big\} 
  \geq  {\mathbb E}^{u^{o}} \Big\{   {\mathbb  H}(t,x^o(t),  Q^o(t), u_t^{o})   \Big\}, 
   \forall u^i \in {\mathbb A}^i, \forall i \in {\mathbb Z}_N.$ This implies that for almost every $(t, x) \in [0,T] \times C([0,T], {\mathbb R}^n)$,   
\begin{align}
  {\mathbb H}(t,x^o(t), Q^o(t), u_t^{-i,o},u_t^i)    \geq   {\mathbb  H}(t,x^o(t),  Q^o(t), u_t^{o}), \hso  
   \forall u^i \in {\mathbb A}^i, \forall i \in {\mathbb Z}_N. \label{sufo6}
\end{align}
Therefore, an application of the comparison theorem of stochastic differential equations \cite{karatzas-shreve1991} to the second right hand side term of (\ref{sufo11}) yields $\Psi^{u^{i,o}, u^i}(\cdot) \geq \Psi^{u^{-i,o}, u^{i,o}}(\cdot),$ a.e.  on $[0, T]  \times C([0,T], {\mathbb R}^n)$. From (\ref{sufo33}), we have 

\begin{align}
\Psi^{u^{-i,o}, u^{i,o}}(t) =&  {\mathbb E}^{u^{-i,o}, u^{i,o}}  \Big\{ \int_{t}^T \ell(s, x(s), u_s^{-i,o}, u_s^{i,o})ds + \varphi(x(T)) |  {\mathbb F}_{0,t}\Big\}  \label{sufo7} \\
\leq & \Psi^{u^{-i,o}, u^{i}}(t) \nonumber \\
=& {\mathbb E}^{u^{-i,o}, u^{i}}  \Big\{ \int_{t}^T \ell(s, x(s), u_s^{-i,o}, u_s^{i})ds  + \varphi(x(T)) |  {\mathbb F}_{0,t}\Big\},   \hst \forall u^i \in {\mathbb U}^{I^i}[0,T].    \label{sufo8}  
\end{align}
By taking conditional expectation of both sides of last inequality with respect to ${\cal G}_{0,t}^{I^i}$, we obtain  
\begin{align}
{\mathbb E}^{u^{-i,o}, u^{i,o}} & \Big\{ \int_{t}^T \ell(s, x(s), u_s^{-i,o}, u_s^{i,o})ds  
+ \varphi(x(T)) |  {\cal G}_{0,t}^{I^i}\Big\}
\leq  {\mathbb E}^{u^{-i,o}, u^{i}}  \Big\{ \int_{t}^T \ell(s, x(s), u_s^{-i,o}, u_s^{i})ds  \nonumber \\
&+ \varphi(x(T)) |  {\cal G}_{0,t}^{I^i}\Big\},   \hst \forall u^i \in {\mathbb U}^{I^i}[0,T].    \label{sufo9}  
\end{align}
Since this hods for all $i=1, \ldots, N$ we deduce PbP optimality. \\
Finally, by taking conditional expectation of both sides of (\ref{sufo7})  with respect to ${\cal G}_{0,t}^{I^i}$ we deduce (\ref{sufo5}). This completes the derivation.

\end{proof}

Finally, we note that if we consider the extended state $(x, \Lambda)$ and corresponding Hamiltonian system of equations, under certain global convexity conditions (see \cite{charalambous-ahmedPISG2013a}), we can show that PbP optimality implies team optimality.

\section{Conclusions and Future Work}
\label{cf}
This paper generalizes static team theory to  stochastic differential decision system with decentralized noiseless feedback information structures.    We have applied Girsanov's theorem to transformed the initial dynamic team problem  to an equivalent team problem, under a reference probability space, with state process independent of any of the team decisions. Then, we described the connection to static team theory discussed by Witsenhausen in \cite{witsenhausen1988}, and we proceeded further to   derive team and PbP  optimality  conditions, using  the stochastic Pontryagin's maximum principle.
 We also discussed the connection between the backward stochastic differential equation and the value process of the team problem.\\
In future work we will apply the optimality conditions to problems from the communication and control areas, as in \cite{charalambous-ahmedFIS_Partii2012}, but instead of decentralized  nonanticipative strategies, we will use decentralized feedback strategies.

\section{Appendix}
{\bf Proof of Theorem~\ref{lemma-g}.} 

 First, we show that ${\mathbb E} \Big\{ \Lambda^u(t) |x(t)|_{{\mathbb R}^n}^2\Big\} < K$. By applying the It\^o differential rule

\begin{align}
d |x(t)|_{{\mathbb R}^n}^2 =& 2\la x(t), \sigma(t,x(t))dW(t) \ra dt 
+ tr \Big( a(t,x(t))\Big)dt, \hst a(t,x) \tri \sigma(t,x) \sigma^*(t,x)  \label{gs3} \\
d \Big(\Lambda_t  |x(t)|_{{\mathbb R}^n}^2\Big) =& \Lambda^u(t)  |x(t)|_{{\mathbb R}^n}^2 f^{*}(t,x(t),u_t) \Big(a(t,x(t))\Big)^{-1} dx(t)  + 2 \Lambda^u(t) \la x(t), \sigma(t,x(t))dW(t) \ra \nonumber \\
&+    2 \Lambda^u(t) \la x(t), f(t,x(t),u_t) \ra + \Lambda^u(t) tr \Big( a(t,x(t)) \Big)dt. \label{gs4} 
\end{align}
Then by applying the It\^o differential rule once more we have
\begin{align}
&d \frac{ \Lambda^u(t)  |x(t)|_{{\mathbb R}^n}^2}{ 1+ \epsilon \Lambda^u(t)  |x(t)|_{{\mathbb R}^n}^2} = \frac{1}{\Big( 1+ \epsilon \Lambda^u(t)  |x(t)|_{{\mathbb R}^n}^2\Big)^2} \Big\{  \Lambda^u(t)  |x(t)|_{{\mathbb R}^n}^2 f^{*}(t,x(t))\Big(a(t,x(t))\Big)^{-1} dx(t) \nonumber \\
& + 2 \Lambda^u(t) \la x(t), \sigma(t,x(t))dW(t) \ra 
+ 2 \Lambda^u(t)
\la x(t), f(t,x(t),u_t) \ra dt+\Lambda^u(t) tr \Big( a(t,x(t)) \Big)dt \Big\} \nonumber \\
&- \frac{\epsilon (\Lambda^u(t))^2}{\Big( 1+ \epsilon \Lambda^u(t)  |x(t)|_{{\mathbb R}^n}^2\Big)^3}\Big\{ \la \Big(  ( a(t,x(t))\Big)^{-1} f(t,x(t),u_t) |x(t)|_{{\mathbb R}^n}^2 \nonumber \\
&+2x(t), a(t,x(t)) \Big(  \Big(   a(t,x(t))\Big)^{-1} f(t,x(t),u_t) |x(t)|_{{\mathbb R}^n}^2+2x(t) \Big)\ra  \Big\}dt \nonumber 
\end{align}

Integrating over $[0,T]$ and taking the expectation with respect to ${\mathbb P}$, and using the fact that ${\mathbb E} \Big( \Lambda^u(t)\Big) \leq 1, \forall t \in [0,T]$, yields
\begin{align}
\frac{d}{dt} {\mathbb E} \Big\{  \frac{ \Lambda^u(t)  |x(t)|_{{\mathbb R}^n}^2}{ 1+ \epsilon \Lambda^u(t)  |x(t)|_{{\mathbb R}^n}^2} \Big\} \leq  {\mathbb E} \Big\{ \frac{\Lambda^u(t) \Big[ 2\la x(t), f(t,x(t),u_t) \ra + tr \Big( a(t,x(t)) \Big) \Big]     }{ 1+ \epsilon \Lambda^u(t)  |x(t)|_{{\mathbb R}^n}^2} \Big\}. \label{gs6} 
 \end{align}
By Assumptions~\ref{A1-A4}, {\bf (A1), (A2)}, there exists $K>0$ such that 
\begin{align}
\frac{d}{dt} {\mathbb E} \Big\{  \frac{ \Lambda^u(t)  |x(t)|_{{\mathbb R}^n}^2}{ 1+ \epsilon \Lambda^u(t)  |x(t)|_{{\mathbb R}^n}^2} \Big\} \leq K \Big(  {\mathbb E}  \Big\{ \frac{\Lambda^u(t) \Big[ |x(t)|_{{\mathbb R}^n}^2 + |u_t|_{{\mathbb R}^d}^2\Big]   }{ 1+ \epsilon \Lambda^u(t)  |x(t)|_{{\mathbb R}^n}^2} \Big\}  +1 \Big) \nonumber 
 \end{align}
Since for any $u \in {\mathbb U}^{(N)}[0, T]$,  we have  $ {\mathbb E} \int_{0}^T \Lambda^u(t)  |u_t|_{{\mathbb R}^d}^2 dt$ is finite, then it follows from Gronwall inequality that 
  \begin{align}
{\mathbb E}\Big\{  \frac{ \Lambda^u(t)  |x(t)|_{{\mathbb R}^n}^2}{ 1+ \epsilon \Lambda^u(t)  |x(t)|_{{\mathbb R}^n}^2} \Big\} \leq C, \hst \forall t \in [0,T]. \label{gs7}
 \end{align}
 By Fatou's lemma we obtain ${\mathbb E} \Big\{\Lambda^u(t)  |x(t)|_{{\mathbb R}^n}^2\Big\}< C,  \forall t  \in [0,T]$. 
 Consider
 \begin{align}
d \frac{\Lambda^u(t)}{ 1 + \epsilon \Lambda^u(t)} = & \frac{\Lambda^u(t) f^*(t,x(t))\Big(a(s,x(s)) \Big)^{-1} dx(t)}{\Big( 1 + \epsilon \Lambda^u(t)\Big)^2}  \nonumber \\
&-   \frac{\epsilon (\Lambda^u(t))^2 f^*(t,x(t),u_t)\Big(a(t,x(t))\Big)^{-1}f(t,x(t),u_t)}{\Big( 1 + \epsilon \Lambda^u(t)\Big)^3}, \nonumber 
\end{align}
then 
\begin{align}
 {\mathbb E} \frac{\Lambda^u(t)}{ 1 + \epsilon \Lambda^u(t)} = 1-  {\mathbb E} \int_{0}^t \frac{\epsilon (\Lambda^u(s))^2 f^*(s,x(s),u_s)\Big( a(t,x(t)) \Big)^{-1}
 f(s,x(s),u_s)}{\Big( 1 + \epsilon \Lambda^u(s)\Big)^3}ds. \label{geq1} 
\end{align}

 Since 
\bes 
 \frac{\epsilon (\Lambda^u(t))^2 f^*(t,x(t),u_t)\Big( a(t,x(t))\Big)^{-1}f(t,x(t),u_t)}{\Big( 1 + \epsilon \Lambda^u(t)\Big)^3} \longrightarrow 0, \hso a.e. \: t \in [0,T],\: {\mathbb P}-a.s. \hso \mbox{as} \hso \epsilon \longrightarrow 0,
\ees 
and by {\bf (A7)} there exists a constant $C>0$ such that it is bounded  by $C \Lambda^u(t) \Big( 1+|x(t)|_{{\mathbb R}^n}^2+ |u_t|_{{\mathbb R}^d}^2  \Big)$,  then by the Lebesgue's  dominated convergence theorem we have
 \bea 
{\mathbb E} \int_{0}^t \frac{\epsilon (\Lambda^u(s))^2 f^*(s,x(s),u_s)\Big( a(s,x(s))\Big)^{-1}f(s,x(s),u_s)}{\Big( 1 + \epsilon \Lambda^u(s)\Big)^3} \longrightarrow 0 \hso \mbox{as} \hso \epsilon \longrightarrow 0. \label{geq2}
\eea 
 Since ${\mathbb E} \Big(\Lambda^u(t) \Big)\leq 1, \forall t \in [0,T]$ then by using (\ref{geq1}) into (\ref{geq2}), we obtain  ${\mathbb E} \frac{\Lambda^u(t)}{ 1 + \epsilon \Lambda^u(t)} \longrightarrow {\mathbb E} \Lambda^u(t)$, as $\epsilon \longrightarrow 0$. Hence, we must have  ${\mathbb E} \Lambda^u(t)=1, \forall t \in [0,T]$. Consequently, we have equivalence of the two dynamic team problems.

\bibliographystyle{IEEEtran}
\bibliography{bibdata}

\begin{thebibliography}{10}
\providecommand{\url}[1]{#1}
\csname url@samestyle\endcsname
\providecommand{\newblock}{\relax}
\providecommand{\bibinfo}[2]{#2}
\providecommand{\BIBentrySTDinterwordspacing}{\spaceskip=0pt\relax}
\providecommand{\BIBentryALTinterwordstretchfactor}{4}
\providecommand{\BIBentryALTinterwordspacing}{\spaceskip=\fontdimen2\font plus
\BIBentryALTinterwordstretchfactor\fontdimen3\font minus
  \fontdimen4\font\relax}
\providecommand{\BIBforeignlanguage}[2]{{%
\expandafter\ifx\csname l@#1\endcsname\relax
\typeout{** WARNING: IEEEtran.bst: No hyphenation pattern has been}%
\typeout{** loaded for the language `#1'. Using the pattern for}%
\typeout{** the default language instead.}%
\else
\language=\csname l@#1\endcsname
\fi
#2}}
\providecommand{\BIBdecl}{\relax}
\BIBdecl

\bibitem{benes1971}
V.~E. Benes, ``Existence of optimal stochastic control law,,'' \emph{SIAM
  Journal on Control and Optimization}, vol.~9, no.~3, pp. 446--472, 1971.

\bibitem{elliott1977}
R.~J. Elliott, ``The optimal control of stochastic system,'' \emph{SIAM Journal
  on Control and Optimization}, vol.~15, no.~5, pp. 756--778, 1977.

\bibitem{bismut1978}
J.~M. Bismut, ``An introductory approach to duality in optimal stochastic
  control,'' \emph{SIAM Review}, vol.~30, pp. 62--78, 1978.

\bibitem{haussmann1986}
U.~G. Haussmann, \emph{A Stochastic Minimum Principle for Optimal Control of
  Diffusions}.\hskip 1em plus 0.5em minus 0.4em\relax Pitman Logman Research
  Notes in Mathematics Vol.15: John Wiley and Sons, 1986.

\bibitem{bensoussan1992a}
A.~Bensoussan, \emph{Stochastic Control of Partially Observable Systems}.\hskip
  1em plus 0.5em minus 0.4em\relax Cambridge University Press, 1982.

\bibitem{ahmed1998}
N.~Ahmed, \emph{Linear and Nonlinear Filtering for Scientists and
  Engineers}.\hskip 1em plus 0.5em minus 0.4em\relax World Scientific, 1998.

\bibitem{yong-zhou1999}
J.~Yong and X.~Y. Zhou, \emph{Stochastic Controls, Hamiltonian Systems and
  {HJB} Equations}.\hskip 1em plus 0.5em minus 0.4em\relax Springer-Verlag,
  1999.

\bibitem{charalambous-hibey1996}
C.~D. Charalambous and J.~L. Hibey, ``Minimum principle for partially
  observable nonlinear risk-sensitive control problems using measure-valued
  decompositions,'' \emph{Stochastics \& Stochastic Reports}, pp. 247--288,
  1996.

\bibitem{witsenhausen1971}
H.~S. Witsenhausen, ``Separation of estimation and control for discrete time
  systems,'' in \emph{Proceedings of the IEEE}, 1971, pp. 1557--1566.

\bibitem{kurtaran1975}
B.-Z. Kurtaran, ``A concice derivation of the {LQG} one-step-delay sharing
  problem solution,'' \emph{IEEE Transactions on Automatic Control}, vol.~20,
  no.~6, pp. 808--810, 1975.

\bibitem{yoshikawa1975}
T.~Yoshikawa, ``Dynamic programming approach to decentralized stochastic
  control problems,'' \emph{IEEE Transactions on Automatic Control}, pp.
  796--797, 1975.

\bibitem{varaiya-walrand1978}
P.~Varaiya and J.~Walrand, ``On delay sharing patterns,'' \emph{IEEE
  Transactions on Automatic Control}, vol.~23, no.~3, pp. 443--445, 1978.

\bibitem{marschak1955}
J.~Marschak, ``Elements for a theory of teams,'' \emph{Management Science},
  vol.~1, no.~2, 1955.

\bibitem{radner1962}
R.~Radner, ``Team decision problems,'' \emph{The Annals of Mathematical
  Statistics}, vol.~33, no.~3, pp. 857--881, 1962.

\bibitem{marschak-radner1972}
J.~Marschak and R.~Radner, \emph{Economic Theory of Teams}.\hskip 1em plus
  0.5em minus 0.4em\relax New Haven: Yale University Pres, 1972.

\bibitem{krainak-speyer-marcus1982a}
J.~Krainak, J.~L. Speyer, and S.~I. Marcus, ``Static team problems-part {I}:
  Sufficient conditions and the exponential cost criterion,'' \emph{IEEE
  Transactions on Automatic Control}, vol.~27, no.~4, pp. 839--848, 1982.

\bibitem{liptser-shiryayev1977}
R.~Liptser and A.~Shiryayev, \emph{Statistics of Random Processes Vol.1}.\hskip
  1em plus 0.5em minus 0.4em\relax Springer-Verlag New York, 1977.

\bibitem{witsenhausen1988}
H.~Witsenhausen, ``Equivalent stochastic control problems,'' \emph{Mathematics
  of Control Signals and Systems}, vol.~1, pp. 3--11, 1988.

\bibitem{ho-chu1972}
Y.-C. Ho and K.-C. Chu, ``Team decision theory and information structures in
  optimal control problems-part {I},'' \emph{IEEE Transactions on Automatic
  Control}, vol.~17, no.~1, pp. 15--22, 1972.

\bibitem{ho-chu1973}
------, ``On the equivalence of information structures in static and dynamic
  teams,'' \emph{IEEE Transactions on Automatic Control}, pp. 187--188, 1973.

\bibitem{kurtaran-sivan1973}
B.-Z. Kurtaran and R.~Sivan, ``Linear-{Q}uadratic-{G}aussian control with
  one-step-delay sharing pattern,'' \emph{IEEE Transactions on Automatic
  Control}, pp. 571--574, 1974.

\bibitem{sandell-athans1974}
N.~R. Sandell and M.~Athans, ``Solution of some nonclassical {LQG} stochastic
  decision problems,'' \emph{IEEE Transactions on Automatic Control}, vol.~19,
  no.~2, pp. 108--116, 1974.

\bibitem{varaiya-walrand1977}
P.~Varaiya and J.~Walrand, ``Decentralized stochastic control,'' University of
  California, Berkeley, Tech. Rep., 1977.

\bibitem{bagghi-basar1980}
A.~Bagghi and T.~Basar, ``Teams decision theory for linear continuous-time
  systems,'' \emph{IEEE Transactions on Automatic Control}, vol.~25, no.~6, pp.
  1154--1161, 1980.

\bibitem{ho1980}
Y.~Ho, ``Team decision theory and information structures,'' \emph{Proceedings
  of IEEE}, vol.~68, pp. 644--655, 1980.

\bibitem{krainak-speyer-marcus1982b}
J.~Krainak, J.~L. Speyer, and S.~I. Marcus, ``Static team problems-part {II}:
  Affine control laws, projections, algorithms, and the {LEGT} problem,''
  \emph{IEEE Transactions on Automatic Control}, vol.~27, no.~4, pp. 848--859,
  1982.

\bibitem{walrand-varaiya1983}
J.~Walrand and P.~Varaiya, ``Optimal causal coding-decoding problems,''
  \emph{IEEE Transactions on Information Theory}, vol.~29, no.~3, pp. 814--820,
  1983.

\bibitem{walrand-varaiya1983a}
J.~C. Walrand and P.~Varaiya, ``Causal conding and control of markov chains,''
  \emph{Systems and Control Letters}, vol.~3, no.~8, pp. 189--192, 1983.

\bibitem{basar1985}
R.~Bansar, ``An equilibrium theory for multiperson decision making with
  multiple probabilistic models,'' \emph{IEEE Transactions on Automatic
  Control}, vol.~30, no.~2, pp. 118--132, 19875.

\bibitem{bansal-basar1987}
R.~Bansar and T.~Basar, ``Stochastic teams with nonclassical information
  revisited: When is an affine law optimal,'' \emph{IEEE Transactions on
  Automatic Control}, vol.~32, no.~6, pp. 554--559, 1987.

\bibitem{aicardi-davoli-minciardi1987}
M.~Aicardi, F.~Davoli, and R.~Minciardi, ``Decentralized optimal control of
  markov chains with a common past information,'' \emph{IEEE Transactions on
  Automatic Control}, vol.~32, no.~11, pp. 1028--1031, 1987.

\bibitem{waal-vanschuppen2000}
P.~R. Wall and J.~H. van Schuppen, ``A class of team problems with discrete
  action spaces: Optimality conditions based on multimodularity,'' \emph{SIAM
  Journal on Control and Optimization}, vol.~38, no.~3, pp. 875--892, 2000.

\bibitem{bamieh-voulgaris2005}
B.~Bamieh and P.~Voulgaris, ``A convex characterization of distributed control
  problems in spatially invariant systems with communication constraints,''
  \emph{Systems and Control Letters}, vol.~54, no.~6, pp. 575--583, 2005.

\bibitem{teneketzis2006}
D.~Teneketzis, ``On the structure of optimal real-time encoders and decoders in
  noisy communication,'' \emph{IEEE Transactions on Information Theory},
  vol.~52, no.~9, pp. 4017--4035, 2006.

\bibitem{mahajan-teneketzis2009}
A.~Mahajan and D.~Teneketzis, ``Optimal design of sequential real-time
  communicaton systems,'' \emph{IEEE Transactions on Information Theory},
  vol.~55, no.~11, pp. 5317--5337, 2009.

\bibitem{mahajan-teneketzis2009a}
------, ``Optimal performance of networked control systems with non-classical
  information structures,'' \emph{SIAM Journal on Control and Optimization},
  vol.~48, no.~3, pp. 1377--1404, 2009.

\bibitem{nayyar-mahajan-teneketzis2011}
A.~Nayyar, A.~Mahajan, and D.~Teneketzis, ``Optimal control strategies in
  delayed sharing information structures,'' \emph{IEEE Transactions on
  Automatic Control}, vol.~56, no.~7, pp. 1606--1620, 2011.

\bibitem{nayyar-teneketzis2011}
A.~Nayyar and D.~Teneketzis, ``On the structure of optimal real-time encoders
  and decoders in multi-terminal communication systems,'' \emph{IEEE
  Transactions on Information Theory}, vol.~57, no.~9, pp. 6196--6214, 2011.

\bibitem{nayyar-teneketzis2011a}
------, ``Sequential problems in decentralized detection with communication,''
  \emph{IEEE Transactions on Information Theory}, vol.~57, no.~8, pp.
  5410--5435, 2011.

\bibitem{vanschuppen2011}
J.~H. van Schuppen, ``Control of distributed stochastic systems-introduction,
  problems, and approaches,'' in \emph{International Proceedings of the IFAC
  World Congress}, 2011.

\bibitem{lessard-lall2011}
L.~Lessard and S.~Lall, ``A state-space solution to the two-player optimal
  control problems,'' in \emph{Proceedings of 49th Annual Allerton Conference
  on Communication, Control and Computing}, 2011.

\bibitem{vanschuppen2012}
J.~H. van Schuppen, O.~Boutin, P.~L. Kempker, J.~Komenda, T.~Masopust,
  N.~Pambakian, and A.~C.~M. Ran, ``Control of distributed systems: Tutorial
  and overview,'' \emph{European Journal on Control}, pp. 579--602, 2012.

\bibitem{farokhi-johansson2012}
F.~Farokhi and K.~Johansson, ``Limited model information control design for
  linear discrete-time systems with stochastic parameters,'' in \emph{In
  Proceedings of the 51st Conference on Decision and Control}, 2012, pp.
  855--861.

\bibitem{mishra-langbort-dullerud2012}
A.~Mishra, C.~Langbort, and G.~Dullerud, ``A team theoretic approach to
  decentralized control of systems with stochastic parameters,'' in \emph{In
  Proceedings of the 51st Conference on Decision and Control}, 2012, pp.
  2116--2121.

\bibitem{gattami-bernhardsson-rantzer2012}
A.~Gattami, B.~M. Bernhardsson, and A.~Rantzer, ``Robust team decision
  theory,'' \emph{IEEE Transactions on Automatic Control}, vol.~57, no.~3, pp.
  794--798, 2012.

\bibitem{charalambous-ahmedFIS_Parti2012}
\BIBentryALTinterwordspacing
C.~D. Charalambous and N.~U. Ahmed, ``Centralized versus decentralized team
  games of distributed stochastic differential decision systems with noiseless
  information structures-{P}art {I}: General theory,'' \emph{Submitted to IEEE
  Transactions on Automatic Control}, p.~39, February 2013. [Online].
  Available: \url{http://arxiv.org/abs/1302.3452}
\BIBentrySTDinterwordspacing

\bibitem{ahmed-charalambous2012a}
N.~U. Ahmed and C.~D. Charalambous, ``Stochastic minimum principle for
  partially observed systems subject to continuous and jump diffusion processes
  and driven by relaxed controls,'' \emph{SIAM Journal on Control and
  Optimization}, vol.~51, no.~4, pp. 3235--3257, 2013.

\bibitem{charalambous-ahmedPISG2013a}
\BIBentryALTinterwordspacing
C.~D. Charalambous and N.~U. Ahmed, ``Dynamic team theory of stochastic
  differential decision systems with decentralized noisy information structures
  via {G}irsanov's measure transformation,'' \emph{Submitted to SIAM Journal on
  Control and Optimization}, p.~52, October 2013. [Online]. Available:
  \url{http://arxiv.org/abs/1309.1913}
\BIBentrySTDinterwordspacing

\bibitem{charalambous-ahmedFIS_Partii2012}
\BIBentryALTinterwordspacing
------, ``Centralized versus decentralized team games of distributed stochastic
  differential decision systems with noiseless information structures-{P}art
  {II}: Applications,'' \emph{Submitted to IEEE Transactions on Automatic
  Control}, p.~39, February 2013. [Online]. Available:
  \url{http://arxiv.org/abs/1302.3416}
\BIBentrySTDinterwordspacing

\bibitem{benes1981}
V.~E. Benes, ``Exact finite-dimensional filters for certain diffusions with
  nonlinear drift,'' \emph{Stochastics}, vol.~5, no. 1-2, pp. 65--91, 1981.

\bibitem{charalambous-elliott1998}
C.~D. Charalambous and R.~Elliott, ``New explicit filters and smoothers for
  diffusions with nonlinear drift and measurment,'' \emph{Systems and Control
  Letters}, vol.~33, pp. 89--103, 1998.

\bibitem{zhang-elliott-siu2012}
X.~Zhang, R.~J. Elliott, and T.~K. Siu, ``A stochastic maximum principle for a
  markov regime-switching jump-diffusion model and its applications to
  finance,'' \emph{SIAM Journal on Control and Optimization}, vol.~50, no.~2,
  pp. 964--990, 2012.

\bibitem{karatzas-shreve1991}
I.~Karatzas and S.~Shreve, \emph{Brownian Motion and Stochastic Calculus},
  2nd~ed.\hskip 1em plus 0.5em minus 0.4em\relax Springer-Verlag, 1991.

\end{thebibliography}

\end{document}